\newtheorem{thm}{Theorem}[section]
\newtheorem{lem}[thm]{Lemma}
\theoremstyle{definition}
\newtheorem{defn}[thm]{Definition}
\newtheorem{rem}[thm]{Remark}
\numberwithin{equation}{section}
\def\eqdefa{\buildrel\hbox{\footnotesize def}\over =}
\newcommand{\R}{\mathbb R}
\newcommand\E{\mathcal{E}}
\newcommand\J{\mathscr J}
\newcommand\B{\mathscr{B}}
\newcommand\N{\mathbb{N}}
\newcommand{\dy}{\,{\rm d}y}
\newcommand{\dx}{\,{\rm d}x}
\newcommand{\dt}{\,{\rm d}t}
\newcommand{\ds}{\,{\rm d}s}
\def\Xint#1{\mathchoice	
	{\XXint\displaystyle\textstyle{#1}}	{\XXint\textstyle\scriptstyle{#1}}	{\XXint\scriptstyle\scriptscriptstyle{#1}}	{\XXint\scriptscriptstyle\scriptscriptstyle{#1}}	\!\int}
\def\XXint#1#2#3{{\setbox0=\hbox{$#1{#2#3}{\int}$}	\vcenter{\hbox{$#2#3$}}\kern-.5\wd0}}
\def\dashint{\Xint-}
\begin{document}
\thispagestyle{empty}

\vspace{1 true cm} {
\title[Regularity for the solutions of 3D MHD equations]
 {Serrin--type regularity criteria for the 3D MHD equations via one  velocity component and one magnetic component}%
\author[H.Chen]{Hui Chen}%
\address[H. Chen]
 {School of Science, Zhejiang University of Science and Technology, Hangzhou, 310023, People's Republic of China }
\email{chenhui@zust.edu.cn}
\author[C. Qian]{Chenyin Qian}
\address[C. Qian]%
{Department of Mathematics, Zhejiang  Normal University Jinhua, 321004, China}
\email{qcyjcsx@163.com }
\author[T. Zhang]{Ting Zhang*}

\address[T. Zhang]{School of Mathematical Sciences, Zhejiang University,  Hangzhou 310027, People's Republic of China}

\email{zhangting79@zju.edu.cn}
\thanks{$^*$Corresponding author.}

\begin{abstract}
In this paper, we consider the Cauchy problem to the 3D MHD equations. We show that the Serrin--type conditions imposed on one component of the velocity $u_{3}$ and one component of magnetic fields $b_{3}$  with
$$
u_{3} \in L^{p_{0},1}(-1,0;L^{q_{0}}(B(2))),\ b_{3} \in L^{p_{1},1}(-1,0;L^{q_{1}}(B(2))),
$$
$\frac{2}{p_{0}}+\frac{3}{q_{0}}=\frac{2}{p_{1}}+\frac{3}{q_{1}}=1$ and $3<q_{0},q_{1}<+\infty$ imply that the suitable weak solution is regular at $(0,0)$. The proof is based on the new local energy estimates introduced by Chae-Wolf (Arch. Ration. Mech. Anal. 2021) and Wang-Wu-Zhang (arXiv:2005.11906).
\end{abstract}

\maketitle


\noindent {{\sl Key words:} MHD equations; Serrin--type conditions; suitable weak solution; partial regularity; local energy estimates}

\vskip 0.2cm

\noindent {\sl AMS Subject Classification (2000):} 35Q35; 35Q30; 76D03

\section{Introduction}
\label{intro}
We consider the Cauchy problem to the three-dimensional incompressible magnetohydrodynamics
(MHD) equations in $\R^3\times(0,\infty)$
\begin{equation}\label{MHD}
	\left\{
	\begin{array}{ll}
		\vspace{4pt}
		&\partial_{t}\bm{u}+(\bm{u}\cdot\nabla) \bm{u}-\mu\Delta  \bm{u}+\nabla \pi=\bm{b}\cdot\nabla \bm{b},\\
		\vspace{4pt}
		&\partial_{t}\bm{b}+(\bm{u}\cdot\nabla)\bm{b}-\nu\Delta\bm{b}=\bm{b}\cdot\nabla\bm{u},\\
		&\nabla\cdot  \bm{u}=\nabla\cdot\bm{b}=0~,
	\end{array}
	\right.
\end{equation}
with initial data
\begin{align}\label{MHDi}
	\bm{u}|_{t=0}=\bm{u}_{0},~\bm{b}|_{t=0}=\bm{b}_{0}.
\end{align}
Here  $\bm{u}=(u_{1},u_{2},u_{3}),\ \bm{b}=(b_{1},b_{2},b_{3})$ and $ \pi$ are nondimensional quantities corresponding to the
velocity, magnetic fields and a scalar pressure. $\mu>0$ is the viscosity coefficient and $\nu>0$ is the resistivity coefficient.  We require $\mu=\nu=1$ in this paper.

Such system \eqref{MHD}--\eqref{MHDi} describes many phenomena such as the geomagnetic dynamo in geophysics, solar winds and solar flares in astrophysics. G. Duvaut and J. L. Lions \cite{Duvaut1972}  constructed a global weak solution and the local strong solution to the initial boundary value problem, and the properties of such
solutions have been examined by M. Sermange and R. Temam in  \cite{Sermange1983}.

If the magnetic fields $\bm{b}=0$, the system \eqref{MHD} degenerates to the incompressible Navier--Stokes equations. A global weak solution to the Navier--Stokes equations was constructed by J. Leray \cite{Leray1934}.  However, the uniqueness and regularity of such weak solution is still one of the most challenging open problems in the field of mathematical fluid mechanics. One essential work is usually referred as Serrin--type conditions (see \cite{Escauriaza2003,Prodi1959,Serrin1962,Takahashi1990} and the references therein.), i.e. if the weak solution $\bm{u}$  satisfies
\begin{equation} \bm u\in L^{p}(0,T; L^{q}(\R^{3})),\ \
	\frac{2}{p}+\frac{3}{q}=1,\ 3\leqslant q\leqslant \infty,
\end{equation} then the weak solution is regular in $(0,T]$. 
There are several notable results \cite{Chae2021,Chemin2016,Chemin2017,Chen2020,Han2019,Wang2020} to weaken the above criteria by imposing constraints only on partial components or directional derivatives of velocity field. In particular, D. Chae and J. Wolf \cite{Chae2021} made an important progress and obtained the regularity of solution  under the condition
\begin{align}\label{1.8}
	u_{3} \in L^{p}\left(0, T ; L^{q}\left(\R^{3}\right)\right), \quad \frac{2}{p}+\frac{3}{q}<1, \quad 3<q\leqslant\infty.
\end{align}
W. Wang, D. Wu and Z. Zhang \cite{Wang2020} improved to
\begin{align}\label{1.8a}
	u_{3} \in L^{p,1}\left(0, T; L^{q}\left(\R^{3}\right)\right), \quad \frac{2}{p}+\frac{3}{q}=1, \quad 3<q<\infty.
\end{align}
Throughout this paper, $L^{p,1}$ denotes the Lorentz space with respect to the time variable.

If the magnetic fields $\bm{b}\neq0$, the situation is more complicated due to the coupling effect between the velocity $\bm{u}$ and the magnetic fields $\bm{b}$. Some fundamental Serrin--type regularity criteria in term of the velocity only were done in \cite{Chen2008,He2005a,Wang2007,Zhou2005}. For instance, if the weak solution $\left(\bm{u},\bm{b}\right)$  satisfies
\begin{equation} \bm u\in L^{p}(0,T; L^{q}(\R^{3})),\ \
	\frac{2}{p}+\frac{3}{q}=1,\ 3< q\leqslant \infty,
\end{equation} then the solution is regular in $(0,T]$.
More regularity criteria can be found in \cite{Han2020,Zhang2015} and the references therein.

We now recall the notion of suitable weak solution to the MHD equations \eqref{MHD}.
\begin{defn}
	We say that  $(\bm{u},\bm{b}, \pi)$ is a suitable weak solution to the MHD equations \eqref{MHD} in an open domain $\Omega_{T}=\Omega \times(-T, 0)$, if
	\begin{enumerate}
		\item[$(1)$] $\bm{u},\bm{b} \in L^{\infty}\left(-T, 0 ; L^{2}(\Omega)\right) \cap L^{2}\left(-T, 0 ; H^{1}(\Omega)\right)$ and $\pi \in L^{\frac{3}{2}}\left(\Omega_{T}\right)$;
		\item[$(2)$] \eqref{MHD} is satisfied in the sense of distribution;
		\item[$(3)$] the local energy inequality holds: for any nonnegative test function\\ $\varphi \in C_{c}^{\infty}\left(\Omega_{T}\right)$  and  $ t \in (-T, 0)$,
		\begin{align}\label{local energy}
			&\int_{\Omega}\left(|\bm{u}|^{2}+|\bm{b}|^{2}\right) \varphi \dx+2 \int_{-T}^{t} \int_{\Omega}\left(|\nabla \bm{u}|^{2}+|\nabla \bm{b}|^{2}\right) \varphi \dx\ds \notag\\
			\leqslant& \int_{-T}^{t} \int_{\Omega}\left(|\bm{u}|^{2}+|\bm{b}|^{2}\right)\cdot\left(\partial_{s} \varphi+\Delta \varphi\right)+\bm{u} \cdot \nabla \varphi\left(|\bm{u}|^{2}+|\bm{b}|^2+2 \pi\right) \dx\ds\notag\\
			&-2\int_{-T}^{t} \int_{\Omega}\left(\bm{u}\cdot\bm{b}\right)\bm{b}\cdot\nabla\varphi\dx\ds.
		\end{align}
	\end{enumerate}
\end{defn}
The global existence of suitable weak solution to the MHD equations \eqref{MHD} was investigated by C. He and Z. Xin \cite{He2005}. They also obtained that one-dimensional Hausdorff measure of the possible space-time singular points set for the suitable weak solution is zero.

In this paper, inspired by \cite{Chae2021,Wang2020}, we focus on the regularity criteria to the MHD equation \eqref{MHD} involving the terms $u_{3}$ and $b_{3}$ only.
\begin{thm}\label{thm1}
	Let $(\bm{u},\bm{b}, \pi)$ be a suitable weak solution of the MHD equations \eqref{MHD} in $\R^3 \times(-1,0) .$ If $\bm{u},\bm{b}$ satisfies
	\begin{equation}\label{R1}
		u_{3} \in L^{p_{0},1}\left(-1,0;L^{q_{0}}(B(2))\right)
	\end{equation}
	and
	\begin{equation}\label{R2} b_{3} \in L^{p_{1},1}\left(-1,0;L^{q_{1}}(B(2))\right)
	\end{equation}
	with $\frac{2}{p_{0}}+\frac{3}{q_{0}}=\frac{2}{p_{1}}+\frac{3}{q_{1}}=1,~3 < q_{0},q_{1}<+ \infty$, then the solution $(\bm{u},\bm{b})$ is regular at $(0,0)$.  Here $B(r)$ is the ball in $\R^3$ with center at origin and radius $r$.
\end{thm}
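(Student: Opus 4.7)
The strategy is to reduce regularity of $(0,0)$ to a Caffarelli--Kohn--Nirenberg (CKN) type $\varepsilon$--regularity criterion on shrinking parabolic cylinders $Q(r)=B(r)\times(-r^2,0)$, and then use the Lorentz space hypotheses on $u_3$ and $b_3$ to force the relevant scale--invariant local energy to become arbitrarily small on a suitable $Q(r)$. The known $\varepsilon$--regularity for suitable weak solutions of MHD \cite{He2005} asserts that there exists an absolute $\varepsilon^\ast>0$ such that smallness of the CKN quantity
$$
\frac{1}{r^2}\int_{Q(r)}\bigl(|\bm u|^3+|\bm b|^3+|\pi|^{3/2}\bigr)\dx\dt<\varepsilon^\ast
$$
at some $r>0$ implies that $(0,0)$ is regular. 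My task therefore reduces to producing, under \eqref{R1}--\eqref{R2}, a radius $r$ on which this bound holds.

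Following Chae--Wolf \cite{Chae2021} and Wang--Wu--Zhang \cite{Wang2020}, I would set up local energy estimates for the horizontal parts $u_h=(u_1,u_2)$ and $b_h=(b_1,b_2)$, which satisfy
$$
\partial_t u_h-\Delta u_h+(\bm u\cdot\nabla)u_h+\nabla_h\pi=(\bm b\cdot\nabla)b_h,\qquad \partial_t b_h-\Delta b_h+(\bm u\cdot\nabla)b_h=(\bm b\cdot\nabla)u_h,
$$
and test these against $\varphi^2 u_h$ and $\varphi^2 b_h$ for a cut--off $\varphi$ equal to $1$ on $Q(r)$ and supported in $Q(2r)$. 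Using $\nabla\cdot\bm u=\nabla\cdot\bm b=0$ and integrating by parts, every convective contribution can be rewritten so as to carry a factor of $u_3$ or $b_3$ (typical terms being $\int u_3\partial_3(\varphi^2)|u_h|^2$, $\int b_3\varphi^2 b_h\cdot\partial_3 u_h$, and their symmetric counterparts). These are of precisely the right homogeneity to be estimated by H\"older in the Serrin pair $(p_0,q_0)$ or $(p_1,q_1)$ against $u_3$ respectively $b_3$, the remaining factor being absorbed into the dissipation $\int\varphi^2(|\nabla u_h|^2+|\nabla b_h|^2)$ through the anisotropic Sobolev embedding.

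A further block of work is the pressure. One decomposes $\pi=\pi_h+\pi_1+\pi_2$ on $B(2)$, where $\pi_h$ is harmonic and $\pi_1,\pi_2$ are the Newtonian potentials of a cut--off version of $\partial_i\partial_j(u_iu_j)$ and $-\partial_i\partial_j(b_ib_j)$ respectively. Interior estimates render $\pi_h$ small on $B(r)$ after shrinking; $\pi_1$ is handled as in the Navier--Stokes case \cite{Chae2021,Wang2020}; and the new magnetic contribution $\pi_2$ is treated by the same horizontal/vertical splitting, so that every summand of $b_ib_j$ either contains a factor of $b_3$ (controlled by \eqref{R2}) or involves only $b_h$ (controlled by the local energy). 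Combining the horizontal energy inequalities with the pressure estimate produces an inequality of the abstract form
$$
A(r)\leqslant C\bigl(\|u_3\|_{L^{p_0,1}(-r^2,0;L^{q_0}(B(2r)))}+\|b_3\|_{L^{p_1,1}(-r^2,0;L^{q_1}(B(2r)))}\bigr)A(r)+\text{lower order},
$$
where $A(r)$ packages the scaled energy, dissipation and pressure quantities on $Q(r)$.

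The main obstacle, which is precisely why the Lorentz space $L^{p,1}$ rather than $L^p$ appears in the hypotheses, is closing this inequality at the critical scaling $\frac{2}{p_i}+\frac{3}{q_i}=1$: a plain H\"older argument yields no positive power of $r$, and this is why Chae--Wolf had to use the strict Serrin scaling $<1$. Adapting the Wang--Wu--Zhang device, I would exploit the vanishing of the Lorentz quasinorm on short time intervals, namely $\|u_3\|_{L^{p_0,1}(-\tau^2,0;L^{q_0})}+\|b_3\|_{L^{p_1,1}(-\tau^2,0;L^{q_1})}\to 0$ as $\tau\to 0^+$, together with the sharp real--interpolation replacement of $L^p$ by $L^{p,1}$ in the nonlinear estimate. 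Choosing $r$ small enough that the prefactor in the displayed inequality is below $1/(2C)$ allows absorption, so $A(r)\to 0$; this in turn forces the CKN quantity below $\varepsilon^\ast$ at some $r$, and Theorem \ref{thm1} follows. The MHD--specific couplings, both the extra stress term $-2(\bm u\cdot\bm b)\bm b\cdot\nabla\varphi$ in \eqref{local energy} and the magnetic contribution to the pressure equation, are the technical points requiring particular care at each step.
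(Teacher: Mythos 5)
Your plan has two genuine gaps, one technical and one structural.

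First, the claim that after testing the horizontal equations against $\varphi^2 u_h$ and $\varphi^2 b_h$ ``every convective contribution can be rewritten so as to carry a factor of $u_3$ or $b_3$'' is not correct. Integrating $\int (\bm u\cdot\nabla)u_h\cdot u_h\varphi^2$ by parts produces $-\tfrac12\int |u_h|^2\,\bm u\cdot\nabla(\varphi^2)$, which contains the purely horizontal term $\int |u_h|^2\, u_h\cdot\nabla_h(\varphi^2)$; this is cubic in the unknowns, carries no factor of $u_3$ or $b_3$, and has no smallness, so it cannot be absorbed into the dissipation. The paper circumvents exactly this obstruction by working with the full local energy inequality \eqref{local energy} and the anisotropic test function $\mathrm{\Phi}_n\eta\psi$, where $\mathrm{\Phi}_n$ is a backward heat kernel in $x_3$ only, so that $\nabla\mathrm{\Phi}_n=(0,0,\partial_3\mathrm{\Phi}_n)$ and the dominant derivative of the test function selects precisely $u_3$ and $b_3$ (the terms $I_1,I_3,J$ in Section \ref{Sec 3}); the residual horizontal cut-off terms are then merely bounded by powers of $\E$ with summable weights $r_i^{1/2}$, not small.

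Second, and more fundamentally, the absorption argument does not deliver smallness of the CKN quantity; it only delivers uniform boundedness. The ``lower order'' terms in your displayed inequality are of size $O(1)$ (they involve $\E$ and the pressure on the unit scale), so $A(r)$ does not tend to zero, and no choice of $r$ makes the $\varepsilon$--regularity criterion of Lemma \ref{A9} applicable directly. This is exactly what Lemma \ref{thm3} establishes: $r^{-2}\|(\bm u,\bm b)\|_{L^3(Q(r))}^3\leqslant C$, a bound, not a vanishing. The proof of Theorem \ref{thm1} then requires an entire second half that your proposal omits: a blow-up/compactness step (rescalings $\bm u_k, \bm b_k$ converging strongly in $L^3$ to a limit suitable weak solution with $v_3=B_3=0$ because the critical Lorentz norms vanish on shrinking time intervals), a stability-of-singularities lemma (Lemma \ref{A10}), and crucially Lemma \ref{A8}, which shows that a suitable weak solution with $u_3=b_3=0$ is regular. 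That last lemma is where the hypothesis $\mu=\nu$ enters, via the Els\"asser variables $\bm Z^{\pm}=\bm u\pm\bm b$ and a Cao--Titi type vertical-averaging argument; nothing in your outline substitutes for it. Without these ingredients the argument does not close.
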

\begin{rem}
	It should be pointed out that the condition  $\mu=\nu$ in the MHD equations \eqref{MHD} is essential in our estimates. Actually, we adopt the Els\"{a}sser variables  $\bm{Z}^{+}=\bm{u}+\bm{b}$ and $\bm{Z}^{-}=\bm{u}-\bm{b}$ to obtain the crucial lemma \ref{A8} that if $u_{3}=b_{3}=0$, the solution is regular at $(0,0)$. However, it is not trivial in the case $\mu\neq\nu$. We will discuss it in our future work.
\end{rem}
\begin{rem}\label{rem1.4}
	If we replace \eqref{R1} with the subcritical regularity criteria
	\begin{equation}
		u_{3} \in L^{p_{0}}\left(-1,0;L^{q_{0}}(B(2))\right),\ \frac{2}{p_{0}}+\frac{3}{q_{0}}<1,\ 3<q_{0}\leqslant +\infty,
	\end{equation}
	Theorem \ref{thm1} still holds.
	Actually, we can pick $2<{p}_{2}<p_{0}$ and $3<q_{2}<q_{0}$ with $\frac{2}{p_{2}}+\frac{3}{q_{2}}=1$. Therefore, we can prove it directly by the embedding inequality
	\begin{equation*}
		\left\|u_{3}\right\|_{L^{p_{2},1}\left(-1,0;L^{q_{2}}(B(2))\right)}\leqslant C\left\|u_{3}\right\|_{L^{p_{0}}\left(-1,0;L^{q_{0}}(B(2))\right)}.
	\end{equation*}
	Analogously, we can replace \eqref{R2} with the regularity criteria
	\begin{equation}
		b_{3} \in L^{p_{1}}\left(-1,0;L^{q_{1}}(B(2))\right),\ \frac{2}{p_{1}}+\frac{3}{q_{1}}<1,\ 3<q_{1}\leqslant +\infty.
	\end{equation}
\end{rem}
\begin{rem}
	By the standard interpolation theory, it is possible to extend the regularity criteria in Theorem \ref{thm1} to weaker one, such as 
	\begin{equation}
		u_{3} \in L^{p_{0},1}\left(-1,0;L^{q_{0},\infty}(B(2))\right),\ b_{3} \in L^{p_{1},1}\left(-1,0;L^{q_{1},\infty}(B(2))\right),
	\end{equation}
	with $\frac{2}{p_{0}}+\frac{3}{q_{0}}=\frac{2}{p_{1}}+\frac{3}{q_{1}}=1,~3 < q_{0},q_{1}<+ \infty$. We leave the details to the interested readers.
\end{rem}
Thanks to  Theorem  \ref{thm1} and  Remark \ref{rem1.4}, we obtain the following theorem.
\begin{thm}\label{thm2}
	Assume that $\bm{u}_{0},\ \bm{b}_{0}\in  H^1(\R^3)$ with $\textrm{div}\ \bm{u}_{0}=\textrm{div}\  \bm{b}_{0}=0$  and $(\bm u,\bm{b})$ is the  weak solution to the MHD equations \eqref{MHD}--\eqref{MHDi}. Then the solution is regular in $\R^3\times(0,T]$, provided that
	\begin{enumerate}
		\item[$\mathrm{(1)}$] $u_{3} \in L^{p_{0},1}(0,T;L^{q_{0}}(\R^3))$ with $\frac{2}{p_{0}}+\frac{3}{q_{0}}=1$, $3<q_{0}<\infty$  , or 
		
		$u_{3} \in L^{p_{0}}(0,T;L^{q_{0}}(\R^3))$ with $\frac{2}{p_{0}}+\frac{3}{q_{0}}<1$, $3<q_{0}\leqslant+\infty$;\\
		\item[$\mathrm{(2)}$] $b_{3} \in L^{p_{1},1}(0,T;L^{q_{1}}(\R^3))$ with $\frac{2}{p_{1}}+\frac{3}{q_{1}}=1$, $3<q_{1}<\infty$  , or 
		
		$b_{3} \in L^{p_{1}}(0,T;L^{q_{1}}(\R^3))$ with $\frac{2}{p_{1}}+\frac{3}{q_{1}}<1$, $3<q_{1}\leqslant+\infty$.
	\end{enumerate} 
\end{thm}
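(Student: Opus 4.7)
\textbf{Proof proposal for Theorem~\ref{thm2}.} The strategy is to upgrade the local statement of Theorem~\ref{thm1} to a global regularity statement on $\R^3\times(0,T]$ by a blow-up argument at the first singular time. Since $\bm u_0,\bm b_0\in H^1(\R^3)$ are divergence-free, the classical local well-posedness theory for \eqref{MHD} (e.g.\ Sermange--Temam~\cite{Sermange1983}) produces a unique strong solution on $\R^3\times(0,T^*)$ for some maximal time $T^*\in(0,+\infty]$. If $T^*>T$ there is nothing to prove, so the plan is to assume $T^*\leqslant T$ and derive a contradiction.

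To bring Theorem~\ref{thm1} into play I first need a suitable weak solution. By He--Xin~\cite{He2005} there exists a globally defined suitable weak solution $(\bm u,\bm b,\pi)$ to \eqref{MHD}--\eqref{MHDi}, and by weak--strong uniqueness it coincides with the strong solution on $\R^3\times(0,T^*)$; throughout the argument one may therefore take the weak solution of the hypothesis to be this suitable one on $\R^3\times(0,T)$. By the definition of $T^*$, there exists at least one point $(x_0,T^*)$ at which $(\bm u,\bm b)$ fails to be regular.

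Next I rescale so that the hypotheses of Theorem~\ref{thm1} are met around the candidate singular point. Set $\lambda:=\min\{1,\tfrac{\sqrt{T^*}}{2}\}$ and define
$$
\tilde{\bm u}(x,t):=\lambda\,\bm u(x_0+\lambda x,T^*+\lambda^2 t),\quad \tilde{\bm b}(x,t):=\lambda\,\bm b(x_0+\lambda x,T^*+\lambda^2 t),
$$
together with $\tilde\pi(x,t):=\lambda^2\pi(x_0+\lambda x,T^*+\lambda^2 t)$. Then $(\tilde{\bm u},\tilde{\bm b},\tilde\pi)$ is again a suitable weak solution of \eqref{MHD} on $\R^3\times(-1,0)$. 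Using the scale invariance of both sides of the Serrin relation $\frac{2}{p_0}+\frac{3}{q_0}=1$ (together with the usual scaling of the time-Lorentz norm), I obtain
$$
\|\tilde u_3\|_{L^{p_0,1}(-1,0;L^{q_0}(B(2)))}=\|u_3\|_{L^{p_0,1}(T^*-\lambda^2,T^*;L^{q_0}(B(x_0,2\lambda)))}\leqslant\|u_3\|_{L^{p_0,1}(0,T;L^{q_0}(\R^3))}<\infty,
$$
and analogously for $\tilde b_3$; when the assumption on $u_3$ (or $b_3$) is subcritical, the same conclusion follows from the embedding of Remark~\ref{rem1.4}. Applying Theorem~\ref{thm1} to $(\tilde{\bm u},\tilde{\bm b},\tilde\pi)$ then gives regularity at $(0,0)$, which transports back to regularity of $(\bm u,\bm b)$ at $(x_0,T^*)$, contradicting the choice of $(x_0,T^*)$. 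Hence $T^*>T$ and the solution is regular on $\R^3\times(0,T]$.

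\textbf{Main obstacle.} The only delicate point is the identification in the second paragraph: the hypothesis supplies a Leray--Hopf-type weak solution, whereas Theorem~\ref{thm1} requires the full suitable-weak-solution structure (in particular an associated pressure and the local energy inequality \eqref{local energy}). Reconciling these objects requires invoking weak--strong uniqueness on $(0,T^*)$ so that the given weak solution coincides with the suitable one constructed in \cite{He2005}, after which the scaling and application of Theorem~\ref{thm1} are routine.
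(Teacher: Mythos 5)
Your proposal is correct and is essentially the argument the paper intends: the paper offers no detailed proof of Theorem \ref{thm2}, stating only that it follows from Theorem \ref{thm1} and Remark \ref{rem1.4}, i.e.\ precisely the translation/rescaling of a candidate singular point into the unit parabolic cylinder that you carry out (together with the standard identification of the given weak solution with a suitable one via weak--strong uniqueness). The only point you gloss over is why a first singular time $T^*\leqslant T$ must produce an actual singular space-time point $(x_0,T^*)$, but for finite-energy solutions this is routine and does not affect the argument.
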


Our paper is organized as follows: we recall some notations and preliminary results in Section \ref{Sec 2}, and establish a key lemma in Section \ref{Sec 3}; finally, we will complete the proof of Theorem \ref{thm1} in Section \ref{Sec 4}.

\section{Notations and Preliminary}\label{Sec 2}
In this preparation section, we recall some usual notations and  preliminary results.

For two comparable quantities, the inequality $X\lesssim Y$ stands for $X\leqslant C Y$ for some positive constant $C$. The dependence of the constant $C$ on other parameters or constants are usually clear from the context, and we will often suppress this dependence.

\par
We use the following standard notations in the literature:
Given matrix $\bm{A}=(A_{ij})\in \R^{m\times n}$, we denote the norm $|\bm{A}|=\left(\Sigma_{i=1}^{m}\Sigma_{j=1}^{n}A_{ij}^2\right)^{\frac{1}{2}}$; for
two vectors $\bm{a}=(a_{1},a_{2},a_{3}),\bm{b}=(b_{1},b_{2},b_{3})$, $\bm{a}\cdot\bm{b}$ and $|\bm{a}|$ are the usual scalar product and norm in $\R^3$ respectively, and $\bm{a}\otimes\bm{b}$ is a matrix with $\left(\bm{a}\otimes\bm{b}\right)_{ij}=a_{i}b_{j}$, $\left(\bm{a},\bm{b}\right)=(a_{1},a_{2},a_{3},b_{1},b_{2},b_{3})$; for $x=(x_{1},x_{2},x_{3})$ $\in \R^{3}$,  $x^{\prime}=(x_{1},x_{2})$ is the horizontal variable;  $B(x_{0},R)$ is the ball in $\R^3$ with center at $x_{0}$ and radius $R$, and $B(R):= B(0,R)$;  $B^{\prime}(x'_{0},R)\subset \R^2$ is the ball in the horizontal plane  with center at $x'_{0}$ and radius $R$, and $B'(R):=B'(0,R)$; $Q(z_{0},R)=B(x_{0},R)\times(t_{0}-R^2,t_{0})$ with $z_{0}=(x_{0},t_{0})$ and $Q(R):=Q(0,R)$; we denote the integral mean value $\left(f(t)\right)_{B(r)}=\dashint_{B(r)}f(y,t)\dy$.

We  also shall use the same  notation as that in  Chae-Wolf \cite{Chae2021}. Set
\begin{equation*}
	U_n(R) \eqdefa B'(R)\times(-r_n, r_n),\ \ Q_n(R) \eqdefa U_n(R)\times(-r_n^2, 0),
\end{equation*}
and
\begin{equation*}
	A_n(R) \eqdefa Q_n(R) \backslash Q_{n+1}(R),
\end{equation*}
where $r_{n}=2^{-n},\ n\in\N$. We consider $\mathrm{\Phi}_n$ given by
$$
\mathrm{\Phi}_n (x,t)\eqdefa \frac{1}{\sqrt{4\pi(-t+r_n^2)}}e^{-\frac{x_3^2}{4(-t+r_n^2)}},\quad (x,t)\in\R^3\times(-\infty, 0),
$$
which satisfies the fundamental solution of the backward heat equation
\begin{equation*}
	\partial_{t}\mathrm{\Phi}_{n}+\partial_{3}^2 \mathrm{\Phi}_{n}=0.
\end{equation*}
There exist absolute  constants $c_1, c_2>0$ such that for  $j=1,\ldots,n-1$, it holds
\begin{equation}\label{Phi_{n}}
	\begin{split}
		& \mathrm{\Phi}_n\leqslant c_2 r_j^{-1},\qquad \qquad \ \ |\partial_3\mathrm{\Phi}_n|\leqslant c_2 r_j^{-2},\qquad\quad\text{in}\quad A_j(R),\\
		c_1 r_n^{-1}\leqslant& \mathrm{\Phi}_n\leqslant c_2 r_n^{-1},\quad c_1 r_n^{-2}\leqslant |\partial_3\mathrm{\Phi}_n|\leqslant c_2 r_n^{-2},\quad\qquad\text{in}\quad Q_n(R).
	\end{split}
\end{equation}
We denote the energy
\begin{align*}
	E_n(R) &\eqdefa \sup_{t\in (-r_n^2, 0)}\int_{U_n(R)}|\left(\bm{u},\bm{b}\right)|^2\dx+\int_{-r^2_n}^{0}\int_{U_n(R)}|\nabla \left(\bm{u},\bm{b}\right)|^2\dx\dt,\\
	\E&\eqdefa  \sup_{t\in (-1, 0)}\int_{\R^3}|\left(\bm{u},\bm{b}\right)|^2\dx+\int_{-1}^{0}\int_{\R^3}|\nabla \left(\bm{u},\bm{b}\right)|^2 \dx \dt.
\end{align*}

The following lemma ensures the energy estimates.
\begin{lem}\cite[Lemma 3.1]{Chae2021}\label{energy}
	Let $ R\geqslant \frac{1}{2}$. For $ 2\leqslant p \leqslant \infty$, $2\leqslant q\leqslant 6$, $\frac{2}{p}+\frac{3}{q}=\frac{3}{2}$, we have
	\begin{equation}
		\|\bm{u}\|_{L^{p}\left(-r_{n}^{2}, 0 ;L^{q}\left(U_{n}(R)\right)\right)}^{2} + \|\bm{b}\|_{L^{p}\left(-r_{n}^{2}, 0 ;L^{q}\left(U_{n}(R)\right)\right)}^{2} \leqslant C E_{n}(R).
	\end{equation}
\end{lem}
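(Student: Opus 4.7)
The plan is to prove the estimate by standard Gagliardo--Nirenberg interpolation on the cylinder $U_n(R)$, using the definition of $E_n(R)$. Since the same argument applies verbatim to $\bm{u}$ and to $\bm{b}$, I would treat a generic scalar function $f$ belonging to $L^{\infty}(-r_n^2,0;L^2(U_n(R)))\cap L^{2}(-r_n^2,0;H^1(U_n(R)))$ and then sum the resulting bounds.

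The first step is to apply the three-dimensional Gagliardo--Nirenberg inequality on the Lipschitz cylinder $U_n(R)=B'(R)\times(-r_n,r_n)$: for $2\leqslant q\leqslant 6$ and $\theta=3\bigl(\tfrac12-\tfrac1q\bigr)$,
$$
\|f(\cdot,t)\|_{L^{q}(U_n(R))} \leqslant C\bigl(\|\nabla f(\cdot,t)\|_{L^{2}(U_n(R))}+\|f(\cdot,t)\|_{L^{2}(U_n(R))}\bigr)^{\theta}\|f(\cdot,t)\|_{L^{2}(U_n(R))}^{1-\theta},
$$
with a constant depending only on the lower bound $R\geqslant \tfrac12$ (so uniformly in $n$). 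The arithmetic that drives the whole estimate is that the Serrin scaling $\frac{2}{p}+\frac{3}{q}=\frac{3}{2}$ forces $p\theta=2$ and $p(1-\theta)=p-2$.

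The second step is to raise the pointwise-in-$t$ inequality to the $p$-th power, integrate over $t\in(-r_n^2,0)$, and bound the $L^{2}$-in-space factor by its supremum in time. This yields
$$
\int_{-r_n^2}^{0}\|f(\cdot,t)\|_{L^{q}(U_n(R))}^{p}\dt \;\leqslant\; C\Bigl(\sup_{t\in(-r_n^2,0)}\|f(\cdot,t)\|_{L^{2}(U_n(R))}^{2}\Bigr)^{(p-2)/2}\!\!\int_{-r_n^2}^{0}\|\nabla f(\cdot,t)\|_{L^{2}(U_n(R))}^{2}\dt \;\leqslant\; C\,E_n(R)^{p/2}.
$$
Taking the $2/p$-th power gives $\|f\|_{L^p(-r_n^2,0;L^q(U_n(R)))}^{2}\leqslant C\,E_n(R)$, and summing the bounds for $f=\bm{u}$ and $f=\bm{b}$ produces the claim. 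The endpoints are handled separately but trivially: $(p,q)=(\infty,2)$ is immediate from the definition of $E_n(R)$, and $(p,q)=(2,6)$ reduces to the Sobolev embedding $H^1\hookrightarrow L^6$ in three dimensions.

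The only mildly technical point is the use of Gagliardo--Nirenberg on the bounded cylinder rather than on $\R^3$; since $R\geqslant\tfrac12$, one may either invoke a direct Lipschitz-domain version of GN or apply a uniform extension operator and use the $\R^3$ inequality. No PDE-specific cancellation is used, so compared to the genuine core of the paper (the key Lemma \ref{A8} and the subsequent iteration on $E_n$) this interpolation step is routine, which is why the result can simply be quoted from Chae--Wolf.
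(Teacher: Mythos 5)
Your overall strategy (spacewise interpolation between $L^2$ and $H^1$, then H\"older in time using the exponent identity $p\theta=2$) is the standard one, and the paper itself simply quotes this lemma from Chae--Wolf rather than proving it. However, there is a genuine gap at the very step you dismiss as "mildly technical": the Gagliardo--Nirenberg constant on $U_n(R)=B'(R)\times(-r_n,r_n)$ is \emph{not} uniform in $n$, because the domain degenerates to a thin slab of thickness $2r_n=2^{1-n}$. Testing your inequality with $f\equiv 1$ shows the failure: for $2<q\leqslant 6$ one needs
\begin{equation*}
|U_n(R)|^{\frac1q}\leqslant C\,|U_n(R)|^{\frac12},\qquad\text{i.e.}\qquad C\geqslant |U_n(R)|^{-\left(\frac12-\frac1q\right)}\sim r_n^{-\left(\frac12-\frac1q\right)}\longrightarrow\infty .
\end{equation*}
In particular your "trivial" endpoint $(p,q)=(2,6)$ is exactly where this is worst: the embedding constant of $H^1(U_n(R))\hookrightarrow L^6(U_n(R))$ blows up like $r_n^{-1/3}$. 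The reason is that any extension or covering argument for the thin slab costs a cutoff/reflection at scale $r_n$ in the $x_3$-direction, which produces a lower-order term $r_n^{-1}\|f\|_{L^2}$, not $\|f\|_{L^2}$.

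The correct version of the spatial inequality, uniform in $n$, is
\begin{equation*}
\|f(\cdot,t)\|_{L^{q}(U_n(R))}\leqslant C\Bigl(\|\nabla f(\cdot,t)\|_{L^{2}(U_n(R))}+r_n^{-1}\|f(\cdot,t)\|_{L^{2}(U_n(R))}\Bigr)^{\theta}\|f(\cdot,t)\|_{L^{2}(U_n(R))}^{1-\theta},
\end{equation*}
and the missing idea is that the extra factor $r_n^{-2}$ produced after raising to the power $p$ (with $p\theta=2$) is absorbed precisely by the \emph{parabolically scaled} time interval $(-r_n^2,0)$:
\begin{equation*}
r_n^{-2}\int_{-r_n^{2}}^{0}\|f(\cdot,t)\|_{L^{2}(U_n(R))}^{2}\dt\leqslant \sup_{t\in(-r_n^2,0)}\|f(\cdot,t)\|_{L^{2}(U_n(R))}^{2}\leqslant E_n(R).
\end{equation*}
This interplay between the thickness $r_n$ of $U_n(R)$ and the length $r_n^2$ of the time interval is the actual content of the lemma and the reason the cylinders $Q_n(R)=U_n(R)\times(-r_n^2,0)$ are defined the way they are; without it your constant $C$ depends on $n$ and the ensuing iteration in Section \ref{Sec 3} collapses. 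With this correction the rest of your computation (the exponent bookkeeping $p\theta=2$, $p(1-\theta)=p-2$, and the treatment of the endpoints) goes through.
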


\section{A Key Lemma}\label{Sec 3}

In this section, we apply a similar argument in  Cafferalli-Kohn-Nirenberg \cite{Caffarelli1982}, Chae-Wolf \cite{Chae2021}, or Wang-Wu-Zhang \cite{Wang2020} to prove  the following lemma.
\begin{lem}\label{thm3}
	Let $(\bm{u},\bm{b}, \pi)$ be a suitable weak solution of \eqref{MHD} in $\R^3 \times(-1,0) .$ If $\bm{u},\bm{b}$ satisfies
	\begin{equation}\label{R3}
		u_{3} \in L^{p_{0},1}\left(-1,0;L^{q_{0}}(B(2))\right),\ b_{3} \in L^{p_{1},1}\left(-1,0;L^{q_{1}}(B(2))\right),
	\end{equation}
	with $\frac{2}{p_{0}}+\frac{3}{q_{0}}=\frac{2}{p_{1}}+\frac{3}{q_{1}}=1,~3 < q_{0},q_{1}<+ \infty$, then
	\begin{align*}
		r^{-2}\|\bm{u}\|_{L^{3}\left(Q(r) \right)}^{3}+r^{-2}\|\bm{b}\|_{L^{3}\left(Q(r)\right)}^{3} \leqslant C,
	\end{align*}
	for any $0<r\leqslant 1$. 
\end{lem}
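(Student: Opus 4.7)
The strategy is to reduce the claim to a uniform bound $\sup_n E_n(R)\le C$ for the local energies on the shrinking cylinders $Q_n(R)$, from which the stated scaled $L^3$ bound follows by Lemma \ref{energy} and H\"older's inequality in the $x_3$-variable (since $B(r_n)\subset U_n(R)$). The reduction is carried out by an absorption argument based on the local energy inequality \eqref{local energy}, adapting the schemes of Chae--Wolf \cite{Chae2021} and Wang--Wu--Zhang \cite{Wang2020} to the coupled MHD system.

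First I would fix $R\in[\frac12,1]$ and, for each $n\ge 1$, test \eqref{local energy} against $\varphi_n(x,t)=\mathrm{\Phi}_n(x_3,t)\psi(x')^2\chi(x_3)^2$, where $\psi$ and $\chi$ are smooth cutoffs equal to one on the inner supports and vanishing slightly outside. The identity $\partial_t\mathrm{\Phi}_n+\partial_3^2\mathrm{\Phi}_n=0$ causes the leading parabolic term $(|\bm u|^2+|\bm b|^2)(\partial_s\varphi_n+\Delta\varphi_n)$ to collapse onto commutators supported in $\bigcup_{j<n}A_j(R)$, which by \eqref{Phi_{n}} are controlled by $\sum_{j=1}^{n-1}r_j^{-2}E_j(R)$. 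Writing $\nabla\varphi_n$ in the expanded form, the advection $\bm u\cdot\nabla\varphi_n(|\bm u|^2+|\bm b|^2+2\pi)$ splits into a cutoff piece supported in the annular regions $A_j(R)$ plus the interior piece $u_3\,\partial_3\mathrm{\Phi}_n\,\psi^2\chi^2(|\bm u|^2+|\bm b|^2+2\pi)$; the MHD cross-term $(\bm u\cdot\bm b)\bm b\cdot\nabla\varphi_n$ splits analogously with interior piece $(\bm u\cdot\bm b)\,b_3\,\partial_3\mathrm{\Phi}_n\,\psi^2\chi^2$. The pressure would be decomposed on $B(2)$ in the Chae--Wolf fashion as $\pi=\pi^{(1)}+\pi^{(2)}$, with $\pi^{(1)}=\mathcal R_i\mathcal R_j[(u_iu_j-b_ib_j)\eta]$ a Calder\'on--Zygmund piece associated to a localization $\eta$, and $\pi^{(2)}$ harmonic on $B(2)$, hence controlled by the $L^{3/2}$-mean of $\pi$ over $B(2)$, which is finite by the suitable weak solution hypothesis.

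Next I would bound the interior pieces by Lorentz--H\"older in time: setting
$$\gamma_n^{u}:=\|u_3\|_{L^{p_0,1}(-r_n^2,0;L^{q_0}(B(2)))},\qquad \gamma_n^{b}:=\|b_3\|_{L^{p_1,1}(-r_n^2,0;L^{q_1}(B(2)))},$$
the critical scaling $\frac{2}{p_i}+\frac{3}{q_i}=1$ together with Lemma \ref{energy} (applied inside each annulus $A_j(R)$ where $|\partial_3\mathrm{\Phi}_n|\lesssim r_j^{-2}$) produces a recursion of the schematic form
$$E_n(R)\le C_0 + C(\gamma_n^{u}+\gamma_n^{b})\sum_{j=1}^{n}\theta_{n-j}\,E_j(R) + C\sum_{j=1}^{n-1}\theta_{n-j}^{1/2}E_j(R)^{3/2}r_j^{1/2},$$
with a summable geometric weight $\theta_k=2^{-\alpha k}$ for some $\alpha>0$. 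Since $\gamma_n^{u},\gamma_n^{b}\to 0$ by absolute continuity of the $L^{p,1}$-norm in time, for $n$ large the small-coefficient sum can be absorbed into $E_n(R)$, and a backwards iteration yields $\sup_n E_n(R)\le C$. Inserting this into Lemma \ref{energy} gives $\|(\bm u,\bm b)\|_{L^{10/3}(Q_n(R))}^2\le C$, whence H\"older in $x_3$ on $U_n(R)\supset B(r_n)$ delivers the desired bound $r^{-2}(\|\bm u\|_{L^3(Q(r))}^3+\|\bm b\|_{L^3(Q(r))}^3)\le C$ for every $0<r\le 1$.

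The main obstacle is the magnetic cross-term $(\bm u\cdot\bm b)\bm b\cdot\nabla\varphi$, which has no Navier--Stokes analogue: one must extract its interior part so that \emph{only} $b_3$ (and not the full vector $|\bm b|$) carries the Serrin weight, which is precisely accomplished by the vertical split $\bm b\cdot\nabla\varphi_n=b_3\partial_3\varphi_n+(\text{horizontal cutoff})$ that the choice of $\varphi_n$ makes possible. Simultaneously the pressure now contains both $u\otimes u$ and $b\otimes b$, so the Calder\'on--Zygmund piece must be distributed symmetrically, and the absorption must close with the two independent small parameters $\gamma_n^u$ and $\gamma_n^b$ at the same time. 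Maintaining the critical balance $\frac{2}{p_i}+\frac{3}{q_i}=1$ in every interpolation step is what forces all exponents to fit after invoking Lemma \ref{energy}, and is the essential reason that the proof must be run in parallel for the $u$- and $b$-blocks rather than for one at a time.
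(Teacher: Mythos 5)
Your overall architecture (test the local energy inequality with $\mathrm{\Phi}_n\eta\psi$, split each term into annular and interior pieces, decompose the pressure into a Calder\'on--Zygmund part built from $\bm u\otimes\bm u-\bm b\otimes\bm b$ plus a harmonic remainder, then absorb) is the paper's. But there is a genuine gap in the quantity you iterate on: you aim for $\sup_n E_n(R)\le C$, and this does \emph{not} imply the lemma. The conclusion requires the Morrey-type decay $E_n(R)\lesssim r_n$, i.e.\ a uniform bound on the \emph{scaled} energy $r_n^{-1}E_n(R)$. Check your own last step: Lemma \ref{energy} with $(p,q)=(4,3)$ gives $\|(\bm u,\bm b)\|_{L^4(-r^2,0;L^3(B(r)))}^2\le CE_n(R)$, and H\"older in time yields $r^{-2}\|(\bm u,\bm b)\|^3_{L^3(Q(r))}\le Cr^{-3/2}E_n(R)^{3/2}$; with only $E_n\le C$ this is $Cr^{-3/2}$, which blows up (the same failure occurs with your $L^{10/3}$ variant, since $|Q(r)|^{1/10}\approx r^{1/2}$). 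Accordingly the recursion must be written for $r_n^{-1}E_n$, as in the paper:
\begin{equation*}
r_{n}^{-1} E_{n}(\rho)\leqslant C_{0}\sum_{i=0}^{n} \left(\B_{i}+r_{i}^{\frac{1}{6}}\right)\left(r_{i}^{-1} E_{i}(R)\right)+C_{0}\frac{1+\E^{3}}{(R-\rho)^{4}},
\end{equation*}
and the extra factor $r_i^{-1}$ changes all the exponent bookkeeping in your interior estimates.

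Two further points where your plan would not close as written. First, the Serrin-weighted annular contributions carry \emph{no} geometric weight $\theta_{n-j}$: on $A_i$ one has $|\partial_3\mathrm{\Phi}_n|\approx r_i^{-2}$, so the $i$-th annulus contributes $\B_i\,(r_i^{-1}E_i(R))$ with coefficient $\B_i$ and nothing else. Smallness therefore cannot come from ``$\gamma_n\to0$ times a summable weight''; it must come from the summability $\sum_i\B_i<\infty$, which is exactly what the Lorentz index $1$ buys via Lemma \ref{A5} (for plain critical $L^{p_0}$ the restricted norms $\B_i$ tend to $0$ but need not be summable, and the argument fails --- this is the whole point of Wang--Wu--Zhang). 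Second, the superlinear term produces $\frac{\E^{1/2}}{R-\rho}\sum_i r_i^{1/2}(r_i^{-1}E_i(R))$ with a constant that degenerates as $(R-\rho)^{-4}$ after Cauchy--Schwarz; absorbing it requires the Giaquinta-type iteration over an increasing sequence of radii $\rho_k\uparrow 1$, not the single-radius backwards induction in $n$ you sketch. With the scaled quantity, the summable $\B_i$, and the two-radius iteration in place, the rest of your outline matches the paper.
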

For fixed $p_{0},p_{1}$, we can pick $\frac{4q_{0}}{2q_{0}-5}< p_{0}^*<p_{0}$ and $2<p_{1}^*<p_{1}$. Set
\begin{align}\label{3.2}
	\B_{i}=r_{i}^{1-\frac{2}{p_{0}^*}-\frac{3}{q_{0}}}\| u_{3}\|_{L^{p_{0}^*}\left(-r_{i}^2,0;L^{q_{0}}(B(2))\right)}+r_{i}^{1-\frac{2}{p_{1}^*}-\frac{3}{q_{1}}}\| b_{3}\|_{L^{p_{1}^*}\left(-r_{i}^2,0;L^{q_{1}}(B(2))\right)}.
\end{align}
By Lemma \ref{A5}, we have
\begin{align}\label{B}
	\sum_{i=0}^{+\infty} \B_{i}\leqslant C\left( \left\|u_{3}\right\|_{L^{p_{0}, 1}\left(-1,0;L^{q_{0}}(B(2))\right) }+\left\|b_{3}\right\|_{L^{p_{1}, 1}\left(-1,0;L^{q_{1}}(B(2))\right) }\right).
\end{align}
Let $\eta\left(x_{3}, t\right) \in C_{c}^{\infty}((-1,1) \times(-1,0])$ denotes a cut-off function,  $0 \leqslant \eta \leqslant 1$, and $\eta=1$ on $\left(-\frac{1}{2}, \frac{1}{2}\right) \times\left(-\frac{1}{4}, 0\right)$.

In addition, let $\frac{1}{2} \leqslant \rho<R\leqslant1$ be arbitrarily chosen, but $|R-\rho|\leqslant\frac{1}{2} .$ Let $\psi=\psi\left(x^{\prime}\right) \in C^{\infty}\left(\R^{2}\right)$ with $0 \leqslant \psi \leqslant 1$ in $B^{\prime}(R)$ satisfying
\begin{equation}
	\psi(x^{\prime})=\psi(|x^{\prime}|)=\left\{\begin{array}{l}
		1 \text { in } B^{\prime}(\rho) \\
		0 \text { in } \R^{2} \backslash B^{\prime}\left(\frac{R+\rho}{2}\right)
	\end{array}\right.,
\end{equation}
and
\begin{equation*}
	|D \psi| \leqslant \frac{C}{R-\rho}, \quad\left|D^{2} \psi\right| \leqslant \frac{C}{(R-\rho)^{2}}.
\end{equation*}

For $j=0,1,\cdots,n$, denote $\chi_{j} =\chi_{B^{\prime}(R)}(x^{\prime})\cdot\eta(2^{j}\cdot x_{3},2^{2j}\cdot t)$, where $\chi_{B^{\prime}(R)}$ is the indicator function of the set $B^{\prime}(R)$. Let
\begin{equation}\label{phi}
	\begin{array}{lll}
		\phi_{j}=\left\{\begin{array}{ll}
			\chi_{j}-\chi_{j+1}, & \text { if } \quad j=0, \ldots, n-1, \\
			\chi_{n}, & \text { if } \quad j=n.
		\end{array}\right.
	\end{array}
\end{equation}

Taking the test function $\varphi=\mathrm{\Phi}_n \eta\psi$ in \eqref{local energy}, we have
\begin{align}\label{key}
	&\int_{U_{0}(R)}|\left(\bm{u},\bm{b}\right)|^{2}\ \mathrm{\Phi}_{n} \eta \psi \dx+2 \int_{-1}^{t} \int_{U_{0}(R)}|\nabla \left(\bm{u},\bm{b}\right)|^{2}\ \mathrm{\Phi}_{n} \eta \psi \dx\ds \notag\\
	\leqslant& \int_{-1}^{t} \int_{U_{0}(R)}|\left(\bm{u},\bm{b}\right)|^{2}\cdot\left(\partial_{s} +\Delta \right)\left(\mathrm{\Phi}_{n} \eta \psi\right)\dx\ds\notag\\
	&+\int_{-1}^{t} \int_{U_{0}(R)}|\left(\bm{u},\bm{b}\right)|^2\ \bm{u} \cdot \nabla \left(\mathrm{\Phi}_{n} \eta \psi\right) -2\left(\bm{u}\cdot\bm{b}\right)\bm{b}\cdot\nabla\left(\mathrm{\Phi}_{n} \eta \psi\right)\dx\ds\notag\\
	&+2\int_{-1}^{t} \int_{U_{0}(R)} \pi \bm{u} \cdot \nabla\left(\mathrm{\Phi}_{n} \eta \psi\right)\dx\ds  .
\end{align}
Next, we shall handle the right side of \eqref{key} term by term.
\subsection{Estimates for nonlinear terms}\label{section3.1}
\begin{lem}\label{lem1}
	Under the assumptions of Lemma \ref{thm3}, we have
	\begin{equation}\label{2.5}
		\int_{-1}^{t} \int_{U_{0}(R)}|\left(\bm{u},\bm{b}\right)|^{2}\cdot\left(\partial_{s}+\Delta\right)\left(\mathrm{\Phi}_{n} \eta \psi\right)\dx\ds  \leqslant C \frac{\E}{(R-\rho)^{2}}.
	\end{equation}
\end{lem}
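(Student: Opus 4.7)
The plan is to expand $(\partial_s+\Delta)(\mathrm{\Phi}_n\eta\psi)$ using the product rule and to exploit the fact that, by construction, $\mathrm{\Phi}_n$ solves the backward 1D heat equation $\partial_t\mathrm{\Phi}_n+\partial_3^2\mathrm{\Phi}_n=0$. Since $\mathrm{\Phi}_n\eta$ depends only on $(x_3,s)$ and $\psi$ only on $x'$, one obtains
\begin{equation*}
(\partial_s+\Delta)(\mathrm{\Phi}_n\eta\psi)
=\psi\bigl[\mathrm{\Phi}_n\partial_s\eta+2\,\partial_3\mathrm{\Phi}_n\,\partial_3\eta+\mathrm{\Phi}_n\partial_3^2\eta\bigr]
+\mathrm{\Phi}_n\eta\,\Delta_{x'}\psi,
\end{equation*}
so the crucial $\partial_s\mathrm{\Phi}_n+\partial_3^2\mathrm{\Phi}_n$ piece vanishes and only cutoff‐derivative remainders survive.

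Next I would handle the two kinds of remainders separately. The first three terms are supported on $\{|x_3|\geqslant\frac12\}\cup\{s\leqslant-\frac14\}$, where the argument of $\mathrm{\Phi}_n$ satisfies $-s+r_n^2\geqslant\tfrac14$ or $x_3^2\geqslant\tfrac14$; a direct inspection of the Gaussian shows that $\mathrm{\Phi}_n$ and $\partial_3\mathrm{\Phi}_n$ are then bounded by absolute constants independent of $n$. Thus the contribution is bounded by $C\iint_{B(2)\times(-1,0)}|(\bm u,\bm b)|^2\,dx\,ds\leqslant C\mathcal{E}$, which is at most $C\mathcal{E}/(R-\rho)^2$ since $R-\rho\leqslant\tfrac12$.

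For the $\mathrm{\Phi}_n\eta\,\Delta_{x'}\psi$ term, the factor $|\Delta_{x'}\psi|\leqslant C/(R-\rho)^2$ produces the desired denominator, and what remains is to show $\iint |(\bm u,\bm b)|^2\mathrm{\Phi}_n\eta\,dx\,ds\leqslant C\mathcal{E}$ uniformly in $n$. Here the key point is that $\mathrm{\Phi}_n(\cdot,s)$ is the 1D heat kernel in the $x_3$ variable, so $\int_{\mathbb R}\mathrm{\Phi}_n(x_3,s)\,dx_3=1$; combined with the 1D Sobolev embedding $H^1(-1,1)\hookrightarrow L^\infty(-1,1)$ in the $x_3$ variable, this gives
\begin{equation*}
\int_{-1}^{1}|(\bm u,\bm b)(x',x_3,s)|^2\mathrm{\Phi}_n\,dx_3
\leqslant C\int_{-1}^{1}\bigl(|(\bm u,\bm b)|^2+|\partial_3(\bm u,\bm b)|^2\bigr)dx_3,
\end{equation*}
and integrating in $x'\in B'(R)$ and $s\in(-1,0)$ produces an upper bound of the form $C\mathcal{E}$.

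The main technical obstacle is the second remainder, because $\mathrm{\Phi}_n$ concentrates like $r_n^{-1}$ near the origin as $n\to\infty$; a naive $L^\infty$ bound on $\mathrm{\Phi}_n$ is useless. The trick is to treat $\mathrm{\Phi}_n$ as a probability measure in $x_3$ and pay the resulting factor in $L^\infty_{x_3}$ via Sobolev in one dimension, so that the $n$-dependence disappears and only the energy $\mathcal{E}$ and the harmless $1/(R-\rho)^2$ factor remain, yielding the stated estimate.
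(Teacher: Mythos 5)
Your proposal is correct, and its skeleton coincides with the paper's: both expand $(\partial_s+\Delta)(\mathrm{\Phi}_n\eta\psi)$ by the product rule, cancel the main term via $\partial_s\mathrm{\Phi}_n+\partial_3^2\mathrm{\Phi}_n=0$, and dispose of the $\eta$-derivative remainders by noting that on $\mathrm{supp}\,\nabla\eta$ (contained in $A_0(R)$) the kernel $\mathrm{\Phi}_n$ and $\partial_3\mathrm{\Phi}_n$ are bounded by absolute constants, so those pieces cost only $C\E$. Where you genuinely diverge is the critical term $\int|(\bm u,\bm b)|^2\mathrm{\Phi}_n\eta\,|\Delta_{x'}\psi|$: the paper decomposes $\eta=\sum_{i=0}^n\phi_i$ with $\mathrm{supp}\,\phi_i\subseteq Q_i\setminus Q_{i+2}$, uses the annulus bounds $\mathrm{\Phi}_n\lesssim r_i^{-1}$ from \eqref{Phi_{n}}, and sums the geometric series $\sum_i r_i^{-1}\cdot r_i^2\sup_t\|(\bm u,\bm b)\|_{L^2}^2\lesssim\E$; you instead exploit that $\mathrm{\Phi}_n(\cdot,s)$ has unit mass in $x_3$ and pay for the $L^\infty_{x_3}$ norm via the one-dimensional embedding $H^1(-1,1)\hookrightarrow L^\infty(-1,1)$. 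Both yield $C\E$ uniformly in $n$. The trade-off: your route is self-contained and avoids the dyadic partition, but it consumes the dissipative part $\|\partial_3(\bm u,\bm b)\|_{L^2}^2$ of the energy, whereas the paper's argument needs only $\sup_t\|(\bm u,\bm b)\|_{L^2}^2$ and, more importantly, rehearses exactly the slicing mechanism (the $\phi_i$ and the bounds \eqref{Phi_{n}}) that is reused for every subsequent term in Lemmas \ref{lem2}--\ref{lem3}, where the weights $r_i^{-1}E_i(R)$ must be kept separated rather than summed into $\E$.
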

\begin{proof}
	By the estimates \eqref{Phi_{n}} for $\mathrm{\Phi}_{n}$ and the fact that $supp\  \nabla \eta \subseteq A_{0}, supp\  \phi_{j}\subseteq Q_{j}\backslash Q_{j+2}$, we have
	\begin{align*}
		&\int_{-1}^{t} \int_{U_{0}(R)}|\left(\bm{u},\bm{b}\right)|^{2}\cdot\left(\partial_{s}+\Delta\right)\left(\mathrm{\Phi}_{n} \eta \psi\right)\dx\ds\\
		=&\int_{-1}^{t} \int_{U_{0}(R)}|\left(\bm{u},\bm{b}\right)|^{2}\cdot\left(\mathrm{\Phi}_{n} \partial_{s} \eta \psi+2 \partial_{3} \mathrm{\Phi}_{n} \partial_{3} \eta \psi+\mathrm{\Phi}_{n} \Delta(\eta \psi)\right) \dx\ds\\
		\lesssim&  \int_{-1}^{0}\int_{U_{0}(R)}|\left(\bm{u},\bm{b}\right)|^{2} \dx\ds+ (R-\rho)^{-2}\sum_{i=0}^{n} \int_{-1}^{0}\int_{U_{0}(R)}|\left(\bm{u},\bm{b}\right)|^{2}\ \mathrm{\Phi}_{n} \phi_{i} \dx\ds \\
		\leqslant&\E+ (R-\rho)^{-2} \sum_{i=0}^{n} r_{i}^{-1} \int_{-r_{i}^2}^{0}\int_{U_{i}(R)}|\left(\bm{u},\bm{b}\right)|^{2} \dx\ds\\
		\lesssim&\E+(R-\rho)^{-2} \sum_{i=0}^{n} r_{i} \E \\
		\lesssim& (R-\rho)^{-2} \E.
	\end{align*}\qed
\end{proof}
\begin{lem}\label{lem2}
	Under the assumptions of Lemma \ref{thm3}, we have
	\begin{align}\label{2.6}
		&\int_{-1}^{t} \int_{U_{0}(R)}|\left(\bm{u},\bm{b}\right)|^2\ \bm{u} \cdot \nabla \left(\mathrm{\Phi}_{n} \eta \psi\right) -2\left(\bm{u}\cdot\bm{b}\right)\bm{b}\cdot\nabla\left(\mathrm{\Phi}_{n} \eta \psi\right)\dx\ds   \notag\\
		\leqslant& C\sum_{i=0}^{n} \B_{i}  \left(r_{i}^{-1} E_{i}(R)\right)+C\E^{\frac{3}{2}}+C(R-\rho)^{-1} \ \E^{\frac{1}{2}} \sum_{i=0}^{n} r_{i}^{\frac{1}{2}}\left(r_{i}^{-1} E_{i}(R)\right),
	\end{align}
	where $\B_{i}$ is defined in \eqref{3.2}.
\end{lem}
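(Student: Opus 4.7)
The plan is to expand $\nabla(\mathrm{\Phi}_n\eta\psi)$ by Leibniz: since $\mathrm{\Phi}_n$ depends only on $(x_3,t)$ and $\psi$ only on $x'$, we have
\[
\nabla(\mathrm{\Phi}_n\eta\psi) = (\partial_3\mathrm{\Phi}_n)\eta\psi\,e_3 + \mathrm{\Phi}_n(\partial_3\eta)\psi\,e_3 + \mathrm{\Phi}_n\eta\,\nabla'\psi.
\]
Hence the left side of \eqref{2.6} splits into three families of terms for each of the two nonlinearities, and in the first two families only the third components $u_3,b_3$ couple with the $e_3$-derivative. This structural observation is what makes the assumption \eqref{R3}, imposed solely on $u_3$ and $b_3$, sufficient to close the estimate.

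For the principal piece involving $\partial_3\mathrm{\Phi}_n$—namely $\int |(\bm{u},\bm{b})|^2 u_3 (\partial_3\mathrm{\Phi}_n)\eta\psi$ and $\int (\bm{u}\cdot\bm{b}) b_3 (\partial_3\mathrm{\Phi}_n)\eta\psi$—I would localise using the partition $\{\phi_j\}$ from \eqref{phi}, whose supports lie in $Q_j(R)\setminus Q_{j+2}(R)$. On each such annulus $|\partial_3\mathrm{\Phi}_n|\lesssim r_j^{-2}$ by \eqref{Phi_{n}}. I apply H\"{o}lder in space-time with exponents $(p_0^*,q_0)$ on the factor $u_3$ (and $(p_1^*,q_1)$ on $b_3$ for the magnetic term), placing the remaining quadratic in $L^{p_0^{*\prime}}(L^{q_0'})$ and bounding it by $\|(\bm{u},\bm{b})\|_{L^{2p_0^{*\prime}}L^{2q_0'}}^2$. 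A further H\"{o}lder in time to reach the admissible exponents $\tilde p=\tfrac{4q_0}{3}$, $\tilde q=2q_0'$ of Lemma \ref{energy} contributes a volume factor $r_j^{2-2/p_0^*-3/q_0}$. Combining this with the $r_j^{-2}$ from $\partial_3\mathrm{\Phi}_n$ and comparing against the definition \eqref{3.2} of $\B_j$, the $j$-th summand is precisely $\B_j\,r_j^{-1}E_j(R)$, and the sum over $j=0,\ldots,n$ yields the first term on the right of \eqref{2.6}.

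For the piece involving $\partial_3\eta$, the cutoff $\partial_3\eta$ is supported in the fixed region $\{|x_3|\in[\tfrac12,1]\}$, on which the Gaussian $\mathrm{\Phi}_n$ is uniformly bounded (the exponential decay beats the $r_n^{-1}$ prefactor there). The integrand is then dominated by a cubic expression in $(\bm{u},\bm{b})$ on a bounded region, which the global parabolic embedding $L^\infty_t L^2_x\cap L^2_t L^6_x\hookrightarrow L^{10/3}\hookrightarrow L^3$ controls by $C\E^{3/2}$. For the piece involving $\nabla'\psi$, I again localise with $\{\phi_j\}$: the bound $\mathrm{\Phi}_n\phi_j\lesssim r_j^{-1}$ from \eqref{Phi_{n}} and $|\nabla'\psi|\lesssim(R-\rho)^{-1}$ combine with a Cauchy--Schwarz-type split (one factor in $L^\infty_tL^2_x$ using the global energy $\E^{1/2}$, the squared remainder bounded via spatial H\"{o}lder and Lemma \ref{energy} by $r_j^{1/2}E_j(R)$) to deliver exactly $(R-\rho)^{-1}\E^{1/2}\sum_j r_j^{1/2}(r_j^{-1}E_j(R))$.

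The main obstacle is the exponent bookkeeping in the principal step: the negative $r_j$-powers from $\partial_3\mathrm{\Phi}_n$, from H\"{o}lder's inequality in the time slice $(-r_j^2,0)$, and from the spatial embedding $L^{2q_0'}(U_j(R))$ with $|U_j(R)|\sim r_j$ must conspire to reproduce exactly the weight $r_j^{1-2/p_0^*-3/q_0}$ built into $\B_j$. This coincidence is possible only when the intermediate exponent $\tilde p=\tfrac{4q_0}{3}$ lies in the admissible range $[2p_0^{*\prime},\infty]$ of Lemma \ref{energy}, which is precisely the role of the technical lower bounds $\tfrac{4q_0}{2q_0-5}<p_0^*<p_0$ and $2<p_1^*<p_1$ fixed above \eqref{3.2}. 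Once this is arranged, the symmetric occurrence of $\bm{u}$ and $\bm{b}$ in the nonlinearities (which ultimately reflects $\mu=\nu$) allows the electric and magnetic contributions to be grouped into the single sum $\sum_j\B_j r_j^{-1}E_j(R)$ displayed on the right.
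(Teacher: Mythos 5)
Your proposal is correct and follows essentially the same route as the paper: the same splitting of $\nabla(\mathrm{\Phi}_n\eta\psi)$ so that only $u_3,b_3$ meet $\partial_3\mathrm{\Phi}_n$, the same dyadic localisation by $\{\phi_j\}$ with $|\partial_3\mathrm{\Phi}_n|\lesssim r_j^{-2}$, and the same H\"older bookkeeping producing $\sum_j\B_j\,r_j^{-1}E_j(R)$, $\E^{3/2}$, and the $(R-\rho)^{-1}\E^{1/2}\sum_j r_j^{1/2}(r_j^{-1}E_j(R))$ terms (the paper's $I_1$--$I_4$). Your only deviations are cosmetic choices of intermediate exponents (e.g.\ $L^\infty_tL^2_x\times L^{8/3}_tL^4_x$ in place of the paper's $L^4_tL^3_x$ cubed), which yield the identical bounds.
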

\begin{proof}
	We first note  that
	\begin{align}\label{I}
		&\int_{-1}^{t} \int_{U_{0}(R)}|\left(\bm{u},\bm{b}\right)|^2\ \bm{u} \cdot \nabla\left(\mathrm{\Phi}_{n} \eta \psi\right)-2\left(\bm{u}\cdot\bm{b}\right)\bm{b}\cdot\nabla\left(\mathrm{\Phi}_{n} \eta \psi\right)\dx\ds   \notag\\
		=&\int_{-1}^{t} \int_{U_{0}(R)}|\left(\bm{u},\bm{b}\right)|^2\ u_{3}\cdot\partial_{3} \mathrm{\Phi}_{n}\cdot \left(\eta \psi\right)\dx  \ds  \notag\\
		& +\int_{-1}^{t} \int_{U_{0}(R)}|\left(\bm{u},\bm{b}\right)|^2\ \bm{u} \cdot\mathrm{\Phi}_{n} \nabla\left(\eta\psi\right)\dx  \ds  \notag\\
		&-2\int_{-1}^{t} \int_{U_{0}(R)}\left(\bm{u}\cdot\bm{b}\right)b_{3}\cdot\partial_{3}\mathrm{\Phi}_{n}\cdot \left(\eta \psi\right)\dx\ds\notag\\
		&-2\int_{-1}^{t} \int_{U_{0}(R)}\left(\bm{u}\cdot\bm{b}\right)\bm{b}\cdot\mathrm{\Phi}_{n} \nabla\left(\eta \psi\right)\dx\ds \notag\\
		\eqdefa &I_{1}+I_{2}+I_{3}+I_{4}.
	\end{align}
	
	By the estimates \eqref{Phi_{n}} for $\mathrm{\Phi}_{n}$, H\"{o}lder inequality and Lemma \ref{energy}, we have
	\begin{align}\label{I_{1}}
		I_{1}
		=&\sum_{i=0}^{n}  \int_{-1}^{t} \int_{U_{0}(R)}|\left(\bm{u},\bm{b}\right)|^2\  u_{3}\cdot\partial_{3} \mathrm{\Phi}_{n}\cdot \left(\phi_{i} \psi\right)\dx  \ds  \notag\\
		\lesssim&\sum_{i=0}^{n}  r_{i}^{-2} \int_{-r_{i}^2}^{0}\|\left(\bm{u},\bm{b}\right)\|_{L^{\frac{2q_{0}}{q_{0}-1}}(U_{i}(R))}^2\|u_{3}\|_{L^{q_{0}}(U_{i}(R))} \ds  \notag\\
		\lesssim& \sum_{i=0}^{n} r_{i}^{-\frac{2}{p_{0}^*}-\frac{3}{q_{0}}}\|\left(\bm{u},\bm{b}\right)\|_{L^{\frac{4q_{0}}{3}}\left(-r_{i}^2,0;L^{\frac{2q_{0}}{q_{0}-1}}(U_{i}(R))\right)}^2 \|u_{3}\|_{L^{p_{0}^*}\left(-r_{i}^2,0;L^{q_{0}}(U_{i}(R))\right)}\notag\\
		\lesssim&\sum_{i=0}^{n} \B_{i} \left(r_{i}^{-1} E_{i}(R)\right).
	\end{align}
	For the second term, by \eqref{Phi_{n}} and Lemma \ref{energy} again, we have
	\begin{align}\label{I_{2}}
		I_{2} =&\int_{-1}^{t} \int_{U_{0}(R)}|\left(\bm{u},\bm{b}\right)|^2\ \bm{u} \cdot\mathrm{\Phi}_{n} \nabla\eta\psi+|\left(\bm{u},\bm{b}\right)|^2\ \bm{u} \cdot\mathrm{\Phi}_{n} \eta\nabla\psi\dx  \ds\notag\\
		\lesssim&\int_{-1}^{0}\int_{U_{0}(R)}|\left(\bm{u},\bm{b}\right)|^3\dx\ds+ \frac{1}{(R-\rho)} \sum_{i=0}^{n} r_{i}^{-1} \int_{-r_{i}^2}^{0}\int_{U_{i}(R)}|\left(\bm{u},\bm{b}\right)|^{3} \dx \ds \notag\\
		\lesssim&\E^{\frac{3}{2}}+ \frac{1}{(R-\rho)} \sum_{i=0}^{n} r_{i}^{-1} r_{i}^{\frac{1}{2}}\ \|\left(\bm{u},\bm{b}\right)\|_{L^{4}\left(-r_{i}^{2}, 0 ; L^{3}\left(U_{i}(R)\right)\right)}^{3} \notag\\
		\lesssim&\E^{\frac{3}{2}}+\frac{\E^{\frac{1}{2}}}{(R-\rho)}  \sum_{i=0}^{n} r_{i}^{\frac{1}{2}} \left(r_{i}^{-1} E_{i}(R)\right).
	\end{align}
	As to $I_{3},I_{4}$ , we have
	\begin{align}\label{I_{3}}
		I_{3}
		\lesssim&\sum_{i=0}^{n}  r_{i}^{-2} \int_{-r_{i}^2}^{0}\ \|\left(\bm{u},\bm{b}\right)\|_{L^{\frac{2q_{1}}{q_{1}-1}}(U_{i}(R))}^2 \|b_{3}\|_{L^{q_{1}}(U_{i}(R))} \ds  \notag\\
		\lesssim& \sum_{i=0}^{n} r_{i}^{-\frac{2}{p_{1}^*}-\frac{3}{q_{1}}}\ \|\left(\bm{u},\bm{b}\right)\|_{L^{\frac{4q_{1}}{3}}\left(-r_{i}^2,0;L^{\frac{2q_{1}}{q_{1}-1}}(U_{i}(R))\right)}^2 \|b_{3}\|_{L^{p_{1}^*}\left(-r_{i}^2,0;L^{q_{1}}(U_{i}(R))\right)}\notag\\
		\lesssim&\sum_{i=0}^{n} \B_{i} \left(r_{i}^{-1} E_{i}(R)\right),
	\end{align}
	which is analogous to \eqref{I_{1}}, and
	\begin{align}\label{I_{4}}
		I_{4}\lesssim&\int_{-1}^{0}\int_{U_{0}(R)}|\left(\bm{u},\bm{b}\right)|^3\dx\ds+ \frac{1}{(R-\rho)} \sum_{i=0}^{n} r_{i}^{-1} \int_{-r_{i}^2}^{0}\int_{U_{i}(R)}|\left(\bm{u},\bm{b}\right)|^{3} \dx \ds \notag\\
		\lesssim&\E^{\frac{3}{2}}+\frac{\E^{\frac{1}{2}}}{(R-\rho)}  \sum_{i=0}^{n} r_{i}^{\frac{1}{2}}\left(r_{i}^{-1} E_{i}(R)\right),
	\end{align}
	which is analogous to \eqref{I_{2}}.
	Combining \eqref{I} with \eqref{I_{1}}, \eqref{I_{2}}, \eqref{I_{3}} and \eqref{I_{4}},  we obtain \eqref{2.6}.
	\qed
\end{proof}
\subsection{Estimates for the pressure}\label{section3.2}

This part is devoted to the estimates regarding the
pressure term on the right side of \eqref{key}. We will give a similar argument in \cite{Wang2020} here. 

Given $3\times3$ matrix valued function $f=\left(f_{jk}\right)$, we set
\begin{align*}
	\widehat{\J(f)}(\xi)=\sum_{j,k=1,2,3} \frac{\xi_{j}\xi_{k}}{|\xi|^2}\mathcal{F} f_{jk},
\end{align*}
where the Fourier transform is defined by
\begin{equation*}
	\mathcal{F} g(\xi)=\hat{g}(\xi)=\int_{\R^3} e^{-i(x \cdot \xi)} g(x)\dx  .
\end{equation*}
Therefore, $\J: L^{q}\left(\R^3\right) \rightarrow L^{q}\left(\R^{3}\right)$,  $1<q<+\infty$ defines a bounded linear operator with
\begin{align}\label{ani}
	\|\J(f)\|_{L^{q}\left(\R^{3}\right)} \leqslant C\ \|f\|_{L^{q}\left(\R^3\right)}.
\end{align}

Denote $f=(f_{jk})=\left(\bm{u}\otimes\bm{u}-\bm{b}\otimes\bm{b}\right)\cdot \chi_{0}$,
\begin{align}\label{pitau}
	\pi_{0}=\J\left(f\right),\ \pi_{h}=\pi-\pi_{0}.
\end{align}
It follows that
\begin{align*}
	\Delta \pi_{0}=\nabla \cdot  \nabla \cdot f=\mathrm{\Sigma}_{j,k=1}^{3}\  \partial_{j} \partial_{k} f_{jk} \quad \text { in } \quad \R^{3} \times(-1,0)
\end{align*}
in the sense of distributions and  $\pi_{h}$  is harmonic in $Q_{1}(R)$. Then we have
\begin{align}\label{pressure}
	&\int_{-1}^{t} \int_{U_{0}(R)} \pi \bm{u} \cdot \nabla\left(\mathrm{\Phi}_{n} \eta \psi\right)\dx  \ds   \notag\\
	=&\int_{-1}^{t} \int_{U_{0}(R)} \pi_{0} \bm{u} \cdot \nabla\left(\mathrm{\Phi}_{n} \eta \psi\right)\dx  \ds  +\int_{-1}^{t} \int_{U_{0}(R)} \pi_{h} \bm{u} \cdot \nabla\left(\mathrm{\Phi}_{n} \eta \psi\right)\dx  \ds   \notag\\
	=&\int_{-1}^{t} \int_{U_{0}(R)} \pi_{0} u_{3} \cdot \partial_{3}\mathrm{\Phi}_{n} \eta \psi\dx  \ds  +\int_{-1}^{t} \int_{U_{0}(R)} \pi_{0} \bm{u} \cdot \mathrm{\Phi}_{n} \nabla\left(\eta \psi\right)\dx  \ds  \notag\\
	&-\int_{-1}^{t} \int_{U_{0}(R)} \nabla\pi_{h}\cdot \bm{u}\cdot  \left(\mathrm{\Phi}_{n} \eta \psi\right)\dx  \ds  \notag\\
	\eqdefa& J+K+H.
\end{align} 
\begin{lem}\label{lem3}  Under the assumptions of Lemma \ref{thm3}, we have
	\begin{align}\label{p1}
		&\int_{-1}^{t} \int_{U_{0}(R)} \pi \bm{u} \cdot \nabla\left(\mathrm{\Phi}_{n} \eta \psi\right)\dx  \normalfont{\ds}  \notag\\
		\leqslant& C\sum_{i=0}^{n} \B_{i} \left(r_{i}^{-1} E_{i}(R)\right)+C\frac{\E^{\frac{1}{2}}}{R-\rho}  \sum_{i=0}^{n} r_{i}^{\frac{1}{2}}\left(r_{i}^{-1} E_{i}(R)\right)+\frac{C}{(R-\rho)^{3}} \E^{\frac{3}{2}}.
	\end{align}
\end{lem}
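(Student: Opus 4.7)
The plan is to split the pressure contribution along the decomposition $\pi=\pi_0+\pi_h$ introduced in \eqref{pitau} into $J+K+H$ as in \eqref{pressure}, treating $J$ via the Calder\'on--Zygmund bound \eqref{ani} on $\pi_0$, and treating $H$ via the harmonicity of $\pi_h$ in $Q_1(R)$. The overall strategy mirrors the handling of the convection terms in Lemma \ref{lem2}, with $\pi_0$ playing the role of the product $|(\bm u,\bm b)|^2$ thanks to \eqref{ani}.

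For $J$, I would introduce $\eta=\sum_{i=0}^{n}\phi_i$ from \eqref{phi}, use the pointwise bound $|\partial_3\mathrm{\Phi}_n|\lesssim r_i^{-2}$ on $A_i(R)$ from \eqref{Phi_{n}}, and then run H\"older in $x$ with exponents $q_0/(q_0-1)$ for $\pi_0$ and $q_0$ for $u_3$, followed by \eqref{ani} to bound $\|\pi_0(\cdot,s)\|_{L^{q_0/(q_0-1)}(\R^3)}\lesssim \|(\bm u,\bm b)(\cdot,s)\|^2_{L^{2q_0/(q_0-1)}(U_0(R))}$. A H\"older step in time with exponents $p_0^*$ (for $u_3$) and $4q_0/3$ (for $\|(\bm u,\bm b)\|^2$), chosen so that Lemma \ref{energy} applies at $(p,q)=(4q_0/3,\,2q_0/(q_0-1))$, combined with \eqref{3.2}, produces the summand $\B_i(r_i^{-1}E_i(R))$ after the same scaling bookkeeping carried out for \eqref{I_{1}}. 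The $\bm b\otimes \bm b$ half of $\pi_0$ is handled symmetrically using $b_3$, $p_1^*$, $q_1$, and the two pieces combine into the first term of \eqref{p1}.

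For $K$, splitting $\nabla(\eta\psi)=\psi\nabla\eta+\eta\nabla\psi$ separates a piece supported in $A_0$ (contributing to the $\E^{3/2}$ part after using $\|\pi_0\|_{L^{3/2}}\lesssim \|(\bm u,\bm b)\|^2_{L^3(U_0(R))}$) from one carrying the weight $(R-\rho)^{-1}$; each piece is estimated via H\"older in space and time exactly as in \eqref{I_{2}} and \eqref{I_{4}}, with Lemma \ref{energy} at $(p,q)=(4,3)$, yielding the middle term of \eqref{p1}. For $H$, the harmonicity of $\pi_h$ on $B(R)$ at each fixed $s\in(-1,0)$ yields the interior gradient estimate
\begin{equation*}
\|\nabla \pi_h(\cdot,s)\|_{L^{\infty}(B(\frac{R+\rho}{2}))}\lesssim (R-\rho)^{-3}\,\|\pi_h(\cdot,s)\|_{L^{3/2}(B(R))},
\end{equation*}
the exponent $3$ matching $n/p+1$ with $n=3,\ p=3/2$. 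Combining with $\|\pi_h\|_{L^{3/2}}\lesssim \|\pi\|_{L^{3/2}}+\|(\bm u,\bm b)\|^2_{L^3}$, the standard local pressure bound $\|\pi\|_{L^{3/2}(Q_1)}^{3/2}\lesssim \E^{3/2}$, and integrating $|\bm u|\mathrm{\Phi}_n\eta\psi$ in space--time against the dyadic energies via Lemma \ref{energy}, produces the $(R-\rho)^{-3}\E^{3/2}$ contribution.

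The main obstacle is concentrated in $J$: the direct application of \eqref{ani} only yields $\|(\bm u,\bm b)\|^2_{L^{2q_0/(q_0-1)}(U_0(R))}$, i.e.\ the global energy $\E$, rather than the dyadic energy $E_i(R)$ needed for summability of $\sum_i \B_i(r_i^{-1}E_i(R))$ downstream. Upgrading the bound to $E_i(R)$ requires a further near/far decomposition $\pi_0=\pi_0^{(i),\mathrm{loc}}+\pi_0^{(i),\mathrm{far}}$ by truncating the source $f$ in \eqref{pitau} at scale $r_i$: the local part is controlled by \eqref{ani} at level $i$, while $\pi_0^{(i),\mathrm{far}}$ is harmonic on $U_i(R)$ and bounded through interior regularity by an $L^{3/2}$ average. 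Balancing the two resulting powers of $r_i$ is what forces the strict inequality $p_0^*>\frac{4q_0}{2q_0-5}$ imposed just before \eqref{3.2}, and is the most delicate point of the argument.
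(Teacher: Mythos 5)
Your proposal follows essentially the same route as the paper: the splitting $J+K+H$ from \eqref{pressure}, Calder\'on--Zygmund via \eqref{ani} for the near part of $\pi_{0}$, interior estimates for the harmonic far part (Lemma \ref{A1}) and for $\pi_{h}$, and you correctly locate the source of the constraint $p_{0}^{*}>\frac{4q_{0}}{2q_{0}-5}$ in the dyadic truncation of the source of $\pi_{0}$. The only omission is that the same near/far decomposition of $\pi_{0}$ is also needed in the $K$ term (the paper's $K_{1}$, $K_{2}$, $K_{3}$), since there too \eqref{ani} alone produces the global $L^{3}$ norm rather than the dyadic energy $E_{i}(R)$; your remark that $K$ goes ``exactly as \eqref{I_{2}} and \eqref{I_{4}}'' holds only after that localization.
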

\begin{proof}
	Combining the estimates \eqref{pressure}, \eqref{J}, \eqref{K} and \eqref{H}, we have \eqref{p1}.\qed
\end{proof}
\subsection{The proof of Lemma \ref{thm3}}\label{section3.3}
On the basis of the estimates of the nonlinear term and the pressure in subsection 3.1--3.2, we are in position to give the detail proof of  Lemma \ref{thm3}.
\begin{proof}
	Gathering \eqref{key} and the estimates in Lemma \ref{lem1}, \ref{lem2} and  \ref{lem3}, we have
	\begin{align*}
		r_{n}^{-1} E_{n}(\rho)
		\leqslant& C\sum_{i=0}^{n} \B_{i}  \left(r_{i}^{-1} E_{i}(R)\right)+ C\frac{ \E^{\frac{1}{2}}}{R-\rho} \sum_{i=0}^{n} r_{i}^{\frac{1}{2}}\left(r_{i}^{-1} E_{i}(R)\right)+C\frac{1+\E^{\frac{3}{2}}}{(R-\rho)^{3}}\\
		\leqslant& C\sum_{i=0}^{n} \B_{i}  \left(r_{i}^{-1} E_{i}(R)\right)+C\frac{\E^{\frac{3}{4}}}{R-\rho}\sum_{i=0}^{n}r_{i}^{-\frac{5}{8}}E_{i}(R)^{\frac{3}{4}}r_{i}^{\frac{1}{8}}+C\frac{1+\E^{\frac{3}{2}}}{(R-\rho)^{3}}\\
		\leqslant &  C\sum_{i=0}^{n} \B_{i}  \left(r_{i}^{-1} E_{i}(R)\right)+C\frac{\E^{\frac{3}{4}}}{R-\rho}\left(\sum_{i=0}^{n}r_{i}^{-\frac{5}{6}}E_{i}(R)\right)^{\frac{3}{4}}\left(\sum_{i=0}^{n}r_{i}^{\frac{1}{2}}\right)^{\frac{1}{4}}\\
		&+C\frac{1+\E^{\frac{3}{2}}}{(R-\rho)^{3}}\\
		\leqslant&C_{0}\sum_{i=0}^{n} \left(\B_{i}+r_{i}^{\frac{1}{6}}\right)  \left(r_{i}^{-1} E_{i}(R)\right)+C_{0}\frac{1+\E^{3}}{(R-\rho)^{4}}.
	\end{align*}
	In view of \eqref{B}, there exists a sufficient large integer $n_{0}\geqslant 1$ such that
	\begin{equation}\label{small}
		C_{0}\sum_{i=n_{0}}^{\infty} \left(\B_{i}+r_{i}^{\frac{1}{6}}\right)\leqslant \frac{1}{2}.
	\end{equation}
	Then for $n\geqslant n_{0} $ we have
	\begin{align}\label{iteration1}
		r_{n}^{-1} E_{n}(\rho) \leqslant& C_{0}\sum_{i=n_{0}}^{n} \left(\B_{i}+r_{i}^{\frac{1}{6}}\right)  \left(r_{i}^{-1} E_{i}(R)\right)\nonumber\\
		&\quad+C_{0}\sum_{i=0}^{n_{0}-1} \left(\B_{i}+r_{i}^{\frac{1}{6}}\right)  \left(r_{i}^{-1} E_{i}(R)\right)+C_{0}\frac{1+\E^{3}}{(R-\rho)^{4}}\notag\\
		\leqslant& C_{0}\sum_{i=n_{0}}^{n} \left(\B_{i}+r_{i}^{\frac{1}{6}}\right)  \left(r_{i}^{-1} E_{i}(R)\right)+\frac{A_{0}}{(R-\rho)^{4}},
	\end{align}
	where the constant  
	$$A_{0}=C_{1}\cdot2^{n_{0}} \left(1+ \left\|u_{3}\right\|_{L^{p_{0}, 1}\left(-1,0;L^{q_{0}}(B(2))\right) }^2+\left\|b_{3}\right\|_{L^{p_{1}, 1}\left(-1,0;L^{q_{1}}(B(2))\right)}^2+\E^{3}\right).$$
	
	As the iteration argument in \cite[V. Lemma 3.1]{Giaquinta1983}, we introduce the sequence $\{\rho_{k}\}_{k=0}^{+\infty}$ which satisfy
	\begin{equation*}
		\rho_{0}=\frac{1}{2},\quad \rho_{k+1}-\rho_{k}=\frac{1-\theta}{2}\theta^{k},
	\end{equation*}
	with  $\frac{1}{2}<\theta^4<1$ and $\lim_{k\rightarrow\infty}\rho_{k}=1$. For $n_{0}\leqslant j\leqslant n$ and $k\geqslant 1$, we have
	\begin{align}\label{iteration2}
		r_{j}^{-1} E_{j}(\rho_{k})\leqslant& C_{0}\sum_{i=n_{0}}^{j} \left(\B_{i}+r_{i}^{\frac{1}{6}}\right)  \left(r_{i}^{-1} E_{i}(\rho_{k+1})\right)+A_{0}\cdot \frac{16}{(1-\theta)^4}\theta^{-4k}\notag\\
		\leqslant&C_{0}\sum_{i=n_{0}}^{n} \left(\B_{i}+r_{i}^{\frac{1}{6}}\right)  \left(r_{i}^{-1} E_{i}(\rho_{k+1})\right)+A_{0}\cdot \frac{16}{(1-\theta)^4}\theta^{-4k}.
	\end{align}
	Applying \eqref{small} and \eqref{iteration2} to the  iteration argument from \eqref{iteration1}, we obtain that for $n\geqslant n_{0}$,
	\begin{align*}
		r_{n}^{-1} E_{n}(\rho_{0}) \leqslant&C_{0}\sum_{j=n_{0}}^{n} \left(\B_{j}+r_{j}^{\frac{1}{6}}\right)  \left(r_{j}^{-1} E_{j}(\rho_{1})\right)+A_{0}\cdot \frac{16}{(1-\theta)^4}\\
		\leqslant&\frac{1}{2} C_{0}\sum_{j=n_{0}}^{n} \left(\B_{j}+r_{j}^{\frac{1}{6}}\right)  \left(r_{j}^{-1} E_{j}(\rho_{2})\right)+A_{0}\cdot \frac{16}{(1-\theta)^4}\left(1+\frac{1}{2\theta^4}\right)\\
		\leqslant& \frac{1}{2^{k-1}}C_{0}\sum_{j=n_{0}}^{n} \left(\B_{j}+r_{j}^{\frac{1}{6}}\right)  \left(r_{j}^{-1} E_{j}(\rho_{k})\right)+A_{0}\cdot \frac{16}{(1-\theta)^4}\sum_{j=0}^{k-1} \left(\frac{1}{2\theta^4}\right)^j\\
		\leqslant&\frac{1}{2^{k-1}}C_{0}\sum_{j=n_{0}}^{n} \left(\B_{j}+r_{j}^{\frac{1}{6}}\right)  \left(r_{j}^{-1} E_{j}(1)\right)+A_{0}\cdot \frac{16}{(1-\theta)^4}\cdot \frac{2\theta^4}{2\theta^4-1}.
	\end{align*}
	Let $k\rightarrow\infty$, we obtain that for $n\geqslant n_{0}$,
	\begin{equation}\label{3.38}
		r_{n}^{-1} E_{n}(\rho_{0}) \leqslant A_{0}\cdot \frac{16}{(1-\theta)^4}\cdot \frac{2\theta^4}{2\theta^4-1}.
	\end{equation}
	For $0<r\leqslant r_{n_{0}}$, there exists an integer $j\geqslant n_{0}$, such that
	\begin{equation*}
		r_{j+1}<r\leqslant r_{j},
	\end{equation*}
	which together with \eqref{3.38} ensures that
	\begin{align*}
		r^{-1}\sup_{t\in (-r^2, 0)}\int_{B(r)}|\left(\bm{u},\bm{b}\right)|^2\dx
		+r^{-1}\int_{Q(r)}|\nabla\left(\bm{u},\bm{b}\right)|^2\dx\dt
		\leqslant 2\ r_{j}^{-1} E_{j}(\rho_{0})\leqslant C .
	\end{align*}
	For $r_{n_{0}}<r \leqslant 1$,
	\begin{align*}
		r^{-1}\sup_{t\in (-r^2, 0)}\int_{B(r)}|\left(\bm{u},\bm{b}\right)|^2\dx
		+r^{-1}\int_{Q(r)}|\nabla\left(\bm{u},\bm{b}\right)|^2\dx\dt\leqslant 2^{n_{0}}\E\leqslant C.
	\end{align*}
	Hence for $0<r\leqslant 1$,
	\begin{align}\label{3.22}
		r^{-1}\sup_{t\in (-r^2, 0)}\int_{B(r)}|\left(\bm{u},\bm{b}\right)|^2\dx
		+r^{-1}\int_{Q(r)}|\nabla\left(\bm{u},\bm{b}\right)|^2\dx\dt\leqslant
		C.
	\end{align}
	By Sobolev embedding theorem, we have that for $0<r\leqslant 1$,
	\begin{align}
		r^{-2}\|\left(\bm{u},\bm{b}\right)\|_{L^{3}(Q(r))}^{3}
		\leqslant C r^{-\frac{3}{2}}\|\left(\bm{u},\bm{b}\right)\|_{L^{4}\left(-r^{2}, 0 ; L^{3}(B(r))\right)}^{3} \leqslant C.
	\end{align}
	The proof is completed.\qed
\end{proof}
\section{Proof of Theorem \ref{thm1}}\label{Sec 4}
Before starting the proof, we give an essential lemma for the 3D MHD equation \eqref{MHD}.
\begin{lem}\label{A8}
	Let $\mu=\nu=1$ in the MHD equations \eqref{MHD}. Assume that $\left(\bm{u},\bm{b},\pi\right)$ is a suitable weak solution in $Q(1)$ with $u_{3}=b_{3}=0$. Then the solution is  regular in $\overline{Q(\frac{1}{2})}$. 
\end{lem}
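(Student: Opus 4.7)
\textbf{Proof plan for Lemma \ref{A8}.}
The plan is to reduce the MHD system to a 2D-like Els\"{a}sser system and then adapt the Chae--Wolf strategy \cite{Chae2021} used in the Navier--Stokes case. Setting $\bm{Z}^{\pm}=\bm{u}\pm\bm{b}$ and exploiting $\mu=\nu=1$, the equations \eqref{MHD} become the pair
\begin{equation*}
\partial_{t}\bm{Z}^{\pm}+(\bm{Z}^{\mp}\cdot\nabla)\bm{Z}^{\pm}-\Delta\bm{Z}^{\pm}+\nabla\pi=0,\qquad \nabla\cdot\bm{Z}^{\pm}=0\quad\hbox{in }Q(1).
\end{equation*}
The hypothesis $u_{3}=b_{3}=0$ forces $Z^{\pm}_{3}\equiv 0$, and the third scalar equation of either system then collapses to $\partial_{3}\pi=0$ in $Q(1)$; hence $\pi=\pi(x_{1},x_{2},t)$, and the horizontal components $\bm{Z}^{\pm}_{h}=(Z^{\pm}_{1},Z^{\pm}_{2})$ satisfy
\begin{equation*}
\partial_{t}\bm{Z}^{\pm}_{h}+(\bm{Z}^{\mp}_{h}\cdot\nabla_{h})\bm{Z}^{\pm}_{h}-\Delta\bm{Z}^{\pm}_{h}+\nabla_{h}\pi=0,\qquad \nabla_{h}\cdot\bm{Z}^{\pm}_{h}=0,
\end{equation*}
i.e.\ a two-dimensional Els\"{a}sser--MHD system parametrized by $x_{3}$ and augmented by the vertical dissipation $\partial_{3}^{2}$.

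The next step is to upgrade integrability of $\bm{Z}^{\pm}$ slice by slice in $x_{3}$. For a horizontal cut-off $\psi(x_{1},x_{2})$ localized in $B'(\rho)$ with $\tfrac{1}{2}<\rho<1$, I would test the horizontal equations against $\psi^{2}\bm{Z}^{\pm}_{h}$; the pressure contribution vanishes by horizontal divergence-freeness, and what remains is a 2D MHD-type energy identity with error terms from $\nabla\psi$ that are absorbed into the dissipation. Slice-wise application of the two-dimensional Ladyzhenskaya inequality
\begin{equation*}
\|f\|_{L^{4}(\R^{2})}^{2}\lesssim\|f\|_{L^{2}(\R^{2})}\,\|\nabla_{h}f\|_{L^{2}(\R^{2})},
\end{equation*}
followed by integration in $x_{3}$, yields $\bm{Z}^{\pm}\in L^{4}(Q(\rho))$. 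Running the same estimate on $\partial_{3}\bm{Z}^{\pm}$ (which is harmless since $\partial_{3}$ commutes with $\nabla\pi$ thanks to $\partial_{3}\pi=0$, and the induced transport terms $(\partial_{3}\bm{Z}^{\mp}_{h}\cdot\nabla_{h})\bm{Z}^{\pm}_{h}$ are controlled by the same 2D interpolation) then raises integrability to a Serrin pair $L^{p}_{t}L^{q}_{x}$ with $\tfrac{2}{p}+\tfrac{3}{q}=1$ on every interior cylinder $Q(\rho')\Subset Q(\rho)$.

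Once $\bm{u},\bm{b}$ belong to such a Serrin class locally, a standard interior regularity result for suitable weak solutions to the MHD system (analogous to the criteria of \cite{He2005a,Zhou2005} used in the introduction) produces smoothness on $\overline{Q(\tfrac{1}{2})}$. The main obstacle is the bookkeeping between the horizontal localization and the vertical direction: since $\psi$ does not localize in $x_{3}$, the energy estimate naturally lives on slabs $B'(\rho)\times\R\times(-1,0)$ rather than on a bounded cylinder, and one must combine the 2D Ladyzhenskaya bound with a 1D Sobolev estimate in $x_{3}$ (together with the existing $L^{\infty}_{t}L^{2}_{x}\cap L^{2}_{t}H^{1}_{x}$ control from the local energy inequality) to recover genuine $L^{p}L^{q}$ bounds on $Q(\rho')$. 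The identity $\partial_{3}\pi=0$ is exactly what keeps this iteration closed; if $\mu\ne\nu$ the Els\"{a}sser decoupling is unavailable, $\pi$ may depend genuinely on $x_{3}$, and the scheme breaks down --- which is consistent with the authors' remark that the case $\mu\ne\nu$ is left to future work.
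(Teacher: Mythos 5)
Your opening reduction coincides with the paper's: the Els\"{a}sser variables $\bm{Z}^{\pm}=\bm{u}\pm\bm{b}$, the observation $Z^{\pm}_{3}=0$, and the consequence $\partial_{3}\pi=0$ are exactly the starting point of the actual proof, and your closing remark about why $\mu\neq\nu$ breaks the scheme is also the authors' point. After that, however, your route has concrete gaps. First, you skip the preliminary reduction that makes any of the subsequent nonlinear testing legitimate: a suitable weak solution is only $L^{\infty}_{t}L^{2}\cap L^{2}_{t}H^{1}$ and satisfies the scalar local energy inequality \eqref{local energy}, so you cannot rigorously test the equation with $\psi^{2}\bm{Z}^{\pm}_{h}$, let alone differentiate it in $x_{3}$ and run an energy estimate on $\partial_{3}\bm{Z}^{\pm}$. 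The paper first argues by contradiction and uses the partial regularity theorem (one-dimensional Hausdorff measure of the singular set is zero, \cite{He2005}) to reduce to a solution that is smooth on $\overline{U(\rho)}\times[-\rho^{2},0)$ with a single potential singular point at $(0,0)$ and uniform bounds near the lateral and initial boundary; only then are the nonlinear multipliers justified and the cut-off boundary terms controllable. Second, your claim that the pressure contribution "vanishes by horizontal divergence-freeness" is false once the horizontal cut-off $\psi$ is present: integration by parts leaves $\int\pi\,\bm{Z}^{\pm}_{h}\cdot\nabla_{h}\psi^{2}$, which needs genuine pressure estimates. Third, the slice-wise Ladyzhenskaya step does not close: to integrate the 2D bound over $x_{3}$ you need $\sup_{x_{3}}\sup_{t}\|\bm{Z}^{\pm}(\cdot,x_{3},t)\|_{L^{2}_{x'}}$, which is a trace in $x_{3}$ that the 3D energy does not provide. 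This is precisely why the paper adopts the Cao--Titi decomposition \cite{Cao2007} into a vertical average $\overline{\bm{Z}^{\pm}}$ (genuinely 2D, controlled by the energy) and a fluctuation $\widetilde{\bm{Z}^{\pm}}$ (with vanishing vertical mean), and performs an $L^{4}$ estimate directly on the fluctuation via the multiplier $4|\widetilde{\bm{Z}^{\pm}}|^{2}\widetilde{\bm{Z}^{\pm}}\varphi^{4}$ and Gronwall.

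Finally, your endgame is both harder than necessary and not reachable from what you establish: $L^{4}_{t,x}$ corresponds to $\tfrac{2}{4}+\tfrac{3}{4}=\tfrac{5}{4}>1$, so it is not a Serrin class, and the proposed upgrade via estimates on $\partial_{3}\bm{Z}^{\pm}$ is asserted rather than proved. The paper never reaches a Serrin class: it uses the $L^{\infty}_{t}L^{4}_{x}$ bound on the fluctuation to show that $r^{-2}\bigl(\|\bm{u}\|^{3}_{L^{3}(Q(r))}+\|\bm{b}\|^{3}_{L^{3}(Q(r))}\bigr)\leqslant Cr^{3/4}+o(1)$ as $r\to0$ (the average part being small by absolute continuity of $\int|\nabla(\bm{u},\bm{b})|^{2}$), and then invokes the $\varepsilon_{1}$-regularity criterion of Lemma \ref{A7}, contradicting the assumed singularity. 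To repair your argument you would need (i) the reduction to an isolated singularity, (ii) the average/fluctuation decomposition in place of slice-wise estimates, and (iii) a scaled smallness criterion such as Lemma \ref{A7} in place of a Serrin condition.
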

\begin{proof}
	Denote $\mathcal{S}_{\mathrm{MHD}}$ the singular set of the suitable weak solution, the cylinder $U(x,r)=B^{\prime}(x^{\prime},r)\times(x_{3}-r,x_{3}+r)$ and $U(r):=U(0,r)$. By \cite{He2005} or \cite[Theorem 1.4]{Vyalov2008}, we know that the one-dimensional Hausdorff measure of $\mathcal{S}_{\mathrm{MHD}}$ is zero. 
	
	Assume that the assertion does not hold, i.e. $\mathcal{S}_{\mathrm{MHD}} \cap \overline{Q(\frac{1}{2})}\neq \varnothing$. By a similar argument in \cite[Lemma 3.2]{kukavica2017}, there exists $(x_{0},t_{0})\in\overline{Q(\frac{1}{2})}$ and some $\varepsilon,\rho$ with $0<\varepsilon<\rho<\frac{1}{2}$, such that
	\begin{align*}
		(x_{0},t_{0})\in\mathcal{S}_{\mathrm{MHD}} \cap \overline{U(x_{0},\rho)\times(t_{0}-\rho^2,t_{0})} \subseteq U\left(x_{0},\rho-\varepsilon\right) \times\left\{t_{0}\right\}.
	\end{align*}
	Without loss of generality, we may assume $(x_{0},t_{0})=(0,0)$. Thus, the solution  $(\bm{u},\bm{b},\nabla\pi)\in C^{\infty}(\overline{U(\rho)}\times[-\rho^2,0))$ and is singular at $(0,0)$. Moreover, there exists a constant $C$, such that 
	\begin{align}\label{A.21}
		\sup_{-1<t<0}\|\left(\bm{u},\bm{b}\right)\|_{L^{2}(B(1))}^2+\|\nabla\left(\bm{u},\bm{b}\right)\|_{L^{2}(Q(1))}^2+\|\pi\|_{L^{\frac{3}{2}}(Q(1))}\leqslant C,
	\end{align}
	and for $(x,t)\in U(\rho)\backslash U(\rho-\varepsilon)\times[-\rho^2,0)$ or $(x,t)\in U(\rho)\times\{-\rho^2\}$,
	\begin{align}\label{A.22}
		|\bm{u}|+|\bm{b}|\leqslant C.
	\end{align}
	
	We introduce the Els\"{a}sser variables  $\bm{Z}^{+}=\left(Z_{1}^{+},Z_{2}^{+},0\right)=\bm{u}+\bm{b}$ and $\bm{Z}^{-}=\left(Z_{1}^{-},Z_{2}^{-},0\right)=\bm{u}-\bm{b}$, which satisfy
	\begin{align*}
		&\partial_{t}\bm{Z}^{+}+\bm{Z}^{-}\cdot\nabla\bm{Z}^{+}-\Delta  \bm{Z}^{+}+\nabla \pi=0,\\
		&\partial_{t}\bm{Z}^{-}+\bm{Z}^{+}\cdot\nabla\bm{Z}^{-}-\Delta\bm{Z}^{-}+\nabla \pi=0.
	\end{align*}
	
	As in \cite{Cao2007}, we introduce
	$\overline{\bm{Z}^{+}}=\int_{-\rho}^{\rho}\bm{Z}^{+}\cdot\beta(x_{3})\dx_{3},\ \overline{\bm{Z}^{-}}=\int_{-\rho}^{\rho}\bm{Z}^{-}\cdot\beta(x_{3})\dx_{3}$, where the non-negative cut-off function $\beta(x_{3})\in C_{c}^{\infty}((-\rho,\rho))$, $\beta=\frac{1}{2\rho-\varepsilon}$ on $(-\rho+\varepsilon,\rho-\varepsilon)$ and $\int_{-\rho}^{\rho}\beta(x_{3})\dx_{3}=1$. It is easy to find that $\partial_{3}\pi=0$, and
	\begin{align*}
		\partial_{t} \overline{\bm{Z}^{+}}+\int_{-\rho}^{\rho}\bm{Z}^{-}\cdot\nabla\bm{Z}^{+}\cdot\beta\dx_{3}-\Delta \overline{\bm{Z}^{+}}+\nabla\pi
		&=\int_{-\rho}^{\rho}\bm{Z}^{+}\cdot\partial_{3}^2\beta\dx_{3},\\
		\partial_{t} \overline{\bm{Z}^{-}}+\int_{-\rho}^{\rho}\bm{Z}^{+}\cdot\nabla\bm{Z}^{-}\cdot\beta\dx_{3}-\Delta \overline{\bm{Z}^{-}}+\nabla\pi
		&=\int_{-\rho}^{\rho}\bm{Z}^{-}\cdot\partial_{3}^2\beta\dx_{3}.
	\end{align*}
	
	Define $\widetilde{\bm{Z}^{+}}=\bm{Z}^{+}-\overline{\bm{Z}^{+}}$, $ \widetilde{\bm{Z}^{-}}=\bm{Z}^{-}-\overline{\bm{Z}^{-}}$, which satisfy
	\begin{align}
		\partial_{t} \widetilde{\bm{Z}^{+}}+\bm{Z}^{-}\cdot\nabla\widetilde{\bm{Z}^{+}}+\widetilde{\bm{Z}^{-}}\cdot\nabla\overline{\bm{Z}^{+}}-\int_{-\rho}^{\rho}\widetilde{\bm{Z}^{-}}\cdot\nabla\widetilde{\bm{Z}^{+}}\cdot\beta\dx_{3}-\Delta \widetilde{\bm{Z}^{+}}
		=&-\int_{-\rho}^{\rho}\bm{Z}^{+}\cdot\partial_{3}^2\beta\dx_{3}\label{A.27}\\
		\partial_{t} \widetilde{\bm{Z}^{-}}+\bm{Z}^{+}\cdot\nabla\widetilde{\bm{Z}^{-}}+\widetilde{\bm{Z}^{+}}\cdot\nabla\overline{\bm{Z}^{-}}-\int_{-\rho}^{\rho}\widetilde{\bm{Z}^{+}}\cdot\nabla\widetilde{\bm{Z}^{-}}\cdot\beta\dx_{3}-\Delta \widetilde{\bm{Z}^{-}}
		=&-\int_{-\rho}^{\rho}\bm{Z}^{-}\cdot\partial_{3}^2\beta\dx_{3}.\label{A.28}
	\end{align}
	
	Let $ \alpha(x_{1},x_{2})\in C_{c}^{\infty}(B^{\prime}(\rho))$ be a cut-off function with $0\leqslant \alpha\leqslant 1$, $\alpha=1$ on $B^{\prime}(\rho-\varepsilon)$. Multiplying \eqref{A.27} by $4|\widetilde{\bm{Z}^{+}}|^2\widetilde{\bm{Z}^{+}}\cdot\varphi^4$ with $\varphi(x)=\alpha(x_{1},x_{2})\cdot\beta(x_{3})$, and integrating the resulting equations over $\R^3$, applying \eqref{A.21}, \eqref{A.22}, H\"{o}lder's inequality and Sobolev embedding theorem, we have
	\begin{align}\label{A.29}
		&\frac{d}{dt}\|\widetilde{\bm{Z}^{+}}\varphi\|_{L^{4}(U(\rho))}^4+4\int_{\R^3}|\nabla\widetilde{\bm{Z}^{+}}|^2|\widetilde{\bm{Z}^{+}}|^2\varphi^4\dx+2\int_{\R^3}|\nabla\left(|\widetilde{\bm{Z}^{+}}|^2\varphi^2\right)|^2\dx\notag\\
		=&\int_{\R^3}\bm{Z}^{-}\cdot\nabla\alpha^4\cdot\beta^4|\widetilde{\bm{Z}^{+}}|^4-4\widetilde{\bm{Z}^{-}}\cdot\nabla\overline{\bm{Z}^{+}}\cdot|\widetilde{\bm{Z}^{+}}|^2\widetilde{\bm{Z}^{+}}\varphi^4\dx\notag\\
		&+4\int_{\R^3}\left(\int_{-\rho}^{\rho}\widetilde{\bm{Z}^{-}}\cdot\nabla\widetilde{\bm{Z}^{+}}\cdot\beta-\bm{Z}^{+}\cdot\partial_{3}^2\beta\dx_{3}\right)\cdot|\widetilde{\bm{Z}^{+}}|^2\widetilde{\bm{Z}^{+}}\varphi^4\dx+2\int_{\R^3}|\widetilde{\bm{Z}^{+}}|^4|\nabla\varphi^2|^2\dx\notag\\
		\lesssim&1+\left(\|\nabla\overline{\bm{Z}^{+}}\|_{L^{2}(B^{\prime}(\rho))}+\|\nabla\widetilde{\bm{Z}^{+}}\|_{L^{2}(U(\rho))}\right)\cdot\|\widetilde{\bm{Z}^{-}}\varphi\|_{L^{4}(U(\rho))}\|\widetilde{\bm{Z}^{+}}\varphi\|_{L^{4}(U(\rho))}\notag\\
		&\times\|\nabla\left(|\widetilde{\bm{Z}^{+}}|^2\varphi^2\right)\|_{L^{2}(U(\rho))}+\|\widetilde{\bm{Z}^{+}}\varphi\|_{L^{4}(U(\rho))}^3
		\notag\\
		\leqslant&C+C\left(1+\|\nabla\bm{Z}^{+}\|_{L^{2}(U(\rho))}^2\right)\cdot\left(\|\widetilde{\bm{Z}^{+}}\varphi\|_{L^{4}(U(\rho))}^4+\|\widetilde{\bm{Z}^{-}}\varphi\|_{L^{4}(U(\rho))}^4\right)\notag\\
		&-\|\nabla\left(|\widetilde{\bm{Z}^{+}}|^2\varphi^2\right)\|_{L^{2}(U(\rho))}^2.
	\end{align}
	Analogously, multiplying \eqref{A.28} by $4|\widetilde{\bm{Z}^{-}}|^2\widetilde{\bm{Z}^{-}}\cdot\varphi^4$, and integrating the resulting equations over $\R^3$, we have
	\begin{align}\label{A.30}
		&\frac{d}{dt}\|\widetilde{\bm{Z}^{-}}\varphi\|_{L^{4}(U(\rho))}^4+4\int_{\R^3}|\nabla\widetilde{\bm{Z}^{-}}|^2|\widetilde{\bm{Z}^{-}}|^2\varphi^4\dx+2\int_{\R^3}|\nabla\left(|\widetilde{\bm{Z}^{-}}|^2\varphi^2\right)|^2\dx\notag\\
		=&\int_{\R^3}\bm{Z}^{+}\cdot\nabla\alpha^4\cdot\beta^4|\widetilde{\bm{Z}^{-}}|^4-4\widetilde{\bm{Z}^{+}}\cdot\nabla\overline{\bm{Z}^{-}}\cdot|\widetilde{\bm{Z}^{-}}|^2\widetilde{\bm{Z}^{-}}\varphi^4\dx\notag\\
		&+4\int_{\R^3}\left(\int_{-\rho}^{\rho}\widetilde{\bm{Z}^{+}}\cdot\nabla\widetilde{\bm{Z}^{-}}\cdot\beta-\bm{Z}^{-}\cdot\partial_{3}^2\beta\dx_{3}\right)\cdot|\widetilde{\bm{Z}^{-}}|^2\widetilde{\bm{Z}^{-}}\varphi^4\dx+2\int_{\R^3}|\widetilde{\bm{Z}^{-}}|^4|\nabla\varphi^2|^2\dx\notag\\
		\leqslant&C+C\left(1+\|\nabla\bm{Z}^{-}\|_{L^{2}(U(\rho))}^2\right)\cdot\left(\|\widetilde{\bm{Z}^{+}}\varphi\|_{L^{4}(U(\rho))}^4+\|\widetilde{\bm{Z}^{-}}\varphi\|_{L^{4}(U(\rho))}^4\right)\notag\\
		&-\|\nabla\left(|\widetilde{\bm{Z}^{-}}|^2\varphi^2\right)\|_{L^{2}(U(\rho))}^2.
	\end{align}
	Summing up \eqref{A.29} and \eqref{A.30} and applying Gronwall's inequality, we have that for all $-\rho^2<t<0$,
	\begin{align}\label{A.31}
		\|\widetilde{\bm{Z}^{+}}\|_{L^{4}(U(\rho-\varepsilon))}^4+\|\widetilde{\bm{Z}^{-}}\|_{L^{4}(U(\rho-\varepsilon))}^4
		\leqslant C	\|\widetilde{\bm{Z}^{+}}\varphi\|_{L^{4}(U(\rho))}^4+C\|\widetilde{\bm{Z}^{-}}\varphi\|_{L^{4}(U(\rho))}^4\leqslant C.
	\end{align}
	Accordingly, by \eqref{A.22} and \eqref{A.31}, we have that for $0<r<\rho-\varepsilon$,
	\begin{align*}
		&r^{-2}\left(\|\bm{u}\|_{L^{3}(Q(r))}^3+\|\bm{b}\|_{L^{3}(Q(r))}^3\right)\\
		\leqslant& Cr^{-2}\left(\|\bm{Z}^{+}\|_{L^{3}(Q(r))}^3+\|\bm{Z}^{-}\|_{L^{3}(Q(r))}^3\right)\\
		\leqslant& Cr^{-2}\left(\|\widetilde{\bm{Z}^{+}}\|_{L^{3}(Q(r))}^3+\|\widetilde{\bm{Z}^{-}}\|_{L^{3}(Q(r))}^3\right)+Cr^{-2}\left(\|\overline{\bm{Z}^{+}}\|_{L^{3}(Q(r))}^3+\|\overline{\bm{Z}^{-}}\|_{L^{3}(Q(r))}^3\right)\\
		\leqslant & C r^{\frac{3}{4}}+Cr^{-1}\int_{-r^{2}}^{0}\int_{B^{\prime}(r)}\left(|\overline{\bm{Z}^{+}}|^3+|\overline{\bm{Z}^{-}}|^3\right)\dx_{1}\dx_{2}\dt\\
		\leqslant & C r^{\frac{3}{4}}+C\sup_{-1<t<0}\|\left(\bm{u},\bm{b}\right)\|_{L^{2}(B(1))}^2\cdot\left(\int_{-r^2}^{0}\int_{-\rho}^{\rho}\int_{B^{\prime}(r)}|\nabla \left(\bm{u},\bm{b}\right)|^2\dx\dt\right)^{\frac{1}{2}}.
	\end{align*}
	We can pick a sufficient small $0<R_{0}<1$, such that for all $0<r<R_{0}$,  
	\begin{align*}
		r^{-2}\left(\|\bm{u}\|_{L^{3}(Q(r))}^3+\|\bm{b}\|_{L^{3}(Q(r))}^3\right)<\varepsilon_{1}.
	\end{align*}
	By Lemma \ref{A7}, we have that the solution is regular at $(0,0)$, which leads to a contradiction.\qed
\end{proof}

Now, we are in position to prove Theorem \ref{thm1} by standard rigid methods.

\noindent For $r_{k}=2^{-k},\  k\geqslant1$ and  $(x, t) \in Q(1)$, we denote the following scaling
\begin{align*}
	&\bm{u}_{k}(x, t)=r_{k} \bm{u}\left(r_{k} x, r_{k}^{2} t\right),\ 		\bm{b}_{k}(x, t)=r_{k} \bm{b}\left(r_{k} x, r_{k}^{2} t\right) \\
	&\pi_{k}(x, t)=r_{k}^{2}\pi\left(r_{k} x, r_{k}^{2} t\right).
\end{align*}
By \eqref{3.22}, we obtain that for  $k\geqslant 1$, 
\begin{align}\label{4.4}
	\sup_{t\in (-1, 0)}\|(\bm{u}_{k},\bm{b}_{k})\|^2_{L^{2}(B(1))}
	+\|\nabla(\bm{u}_{k},\bm{b}_{k})\|^2_{L^{2}(Q(1))}\leqslant C.
\end{align}
Applying Lemma \ref{thm3} and Lemma \ref{A6}, we obtain that for  $0<r\leqslant 1$,
\begin{align*}
	r^{-2}\|\pi\|_{L^{\frac{3}{2}}(Q(r))}^{\frac{3}{2}}\leqslant C,
\end{align*}
which implies that 
\begin{align}\label{4.5}
	\|\pi_{k}\|_{L^{\frac{3}{2}}(Q(1))}\leqslant C.
\end{align}
Moreover, $\left(\bm{u}_{k},\bm{b}_{k},\pi_{k}\right)$ is a suitable weak solution to the MHD equations \eqref{MHD} in $Q(1)$, which converges weakly (by taking subsequences if needed) to some $\left(\bm{v},\bm{B},\Pi\right)$,
\begin{align}\label{control1}
	\sup_{t\in (-1, 0)}\|\left(\bm{v},\bm{B}\right)\|^2_{L^{2}(B(1))}+\|\nabla\left(\bm{v},\bm{B}\right)\|^2_{L^{2}(Q(1))}+\|\Pi\|_{L^{\frac{3}{2}}(Q(1))}\leqslant C.
\end{align}
By a similar argument as \cite[Theorem 2.2]{Lin1998}, we can prove that $\left(\bm{v},\bm{B},\Pi\right)$ is a suitable weak solution to the MHD equations in $Q(1)$ and
$\left(\bm{u}_{k},\bm{b}_{k}\right)\underset{k\rightarrow\infty}{\longrightarrow} \left(\bm{v},\bm{B}\right)$ strongly in $L^{3}(Q(1))$. By \eqref{R1}--\eqref{R2}, we have
\begin{align}\label{control2}
	v_{3}=B_{3}=0.
\end{align}
By Lemma \ref{A8}, the solution $(\bm{v},\bm{B})$ is regular at $(0,0)$. If we assume that the solution $(\bm{u},\bm{b})$ is singular at $(0,0)$, $(\bm{u}_{k},\bm{b}_{k})$ is also singular at $(0,0)$. It leads to a contradiction by Lemma \ref{A10}. 

The proof of Theorem \ref{thm1} is completed.\qed

\appendix
\section{}
\begin{lem}\cite[Lemma A.2]{Chae2021}\label{A1}
	Let $0<r \leqslant R<+\infty $ and $h: B^{\prime}(2 R) \times(-r, r) \rightarrow \R$ be harmonic. Then for all $0<\rho \leqslant \frac{r}{4}$ and $1 \leqslant \ell \leqslant q <+\infty$, we get
	\begin{align}
		\| h\|_{L^{q}\left(B^{\prime}(R) \times(-\rho, \rho)\right)}^{q} \leqslant C \rho r^{2- \frac{3q}{\ell}}\|h\|_{L^{\ell}\left(B^{\prime}(2 R) \times(-r, r)\right)}^{q},
	\end{align}
	where $C$ stands for a positive constant depending only on $q$ and $\ell$.
\end{lem}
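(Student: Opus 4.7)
The plan is to establish the inequality by interpolating between an $L^\infty$-to-$L^\ell$ bound and an $L^\ell$-to-$L^\ell$ smoothing bound, both obtained from the sub-mean value property of $|h|^\ell$.

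First, since $h$ is harmonic, $|h|^\ell$ is subharmonic for $\ell\geqslant 1$. Given $\rho\leqslant r/4$ and $r\leqslant R$, at every $z=(x',x_3)\in B^\prime(R)\times(-\rho,\rho)$ the Euclidean ball $B(z,r/2)$ lies in $B^\prime(2R)\times(-r,r)$. The classical sub-mean value inequality therefore produces a pointwise estimate $|h(z)|\leqslant Cr^{-3/\ell}\|h\|_{L^\ell(B^\prime(2R)\times(-r,r))}$, i.e.\ an $L^\infty$-bound on the small cylinder.

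Second, integrating the same sub-mean value estimate against $z\in B^\prime(R)\times(-\rho,\rho)$ and swapping the order of integration via Fubini turns it into an $L^\ell$-bound on the small cylinder. The crucial geometric observation is that for each $y\in B^\prime(2R)\times(-r,r)$, the intersection $B(y,r/2)\cap\bigl(B^\prime(R)\times(-\rho,\rho)\bigr)$ sits in a slab of vertical thickness at most $2\rho$ with horizontal cross-section of radius at most $r/2$; its volume is thus bounded by $Cr^2\rho$ rather than by the full $Cr^3$. This yields
$$
\|h\|_{L^\ell(B^\prime(R)\times(-\rho,\rho))}^{\ell} \leqslant C\frac{\rho}{r}\|h\|_{L^\ell(B^\prime(2R)\times(-r,r))}^{\ell}.
$$

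Finally, the elementary interpolation $\|h\|_{L^q}^q\leqslant \|h\|_{L^\infty}^{q-\ell}\|h\|_{L^\ell}^\ell$ on $B^\prime(R)\times(-\rho,\rho)$, valid for $1\leqslant \ell\leqslant q$, is combined with the two bounds above. The exponents collapse: the sup factor contributes $r^{-3(q-\ell)/\ell}$, the $L^\ell$ factor contributes $\rho/r$, and together they produce exactly $C\rho r^{2-3q/\ell}\|h\|_{L^\ell(B^\prime(2R)\times(-r,r))}^{q}$, as claimed.

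The only step that needs care is the second one: the naive route of simply multiplying the $L^\infty$ bound by the volume of the small region would produce a factor $R^2\rho r^{-3q/\ell}$, weaker than the claim by $(R/r)^2$ whenever $R>r$. The improvement to $r^2$ comes precisely from the Fubini-plus-intersection-volume argument, which exploits the thinness of the target region in the $x_3$-direction ($2\rho\ll r$) rather than its horizontal extent.
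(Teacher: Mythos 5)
Your argument is correct. Note that the paper itself offers no proof of this lemma --- it is quoted verbatim from Chae--Wolf \cite[Lemma A.2]{Chae2021} --- so there is nothing internal to compare against; judged on its own, your proof is complete and the constants work out. The three ingredients (subharmonicity of $|h|^{\ell}$ for $\ell\geqslant 1$, the containment $B(z,r/2)\subset B^{\prime}(2R)\times(-r,r)$ for $z\in B^{\prime}(R)\times(-\rho,\rho)$ under the hypotheses $r\leqslant R$ and $\rho\leqslant r/4$, and the interpolation $\|h\|_{L^{q}}^{q}\leqslant\|h\|_{L^{\infty}}^{q-\ell}\|h\|_{L^{\ell}}^{\ell}$) are all valid, and the exponent bookkeeping $r^{-3(q-\ell)/\ell}\cdot\rho r^{-1}=\rho r^{2-3q/\ell}$ is exact. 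You have also correctly identified the one genuinely delicate point: a naive $L^{\infty}$ bound times the measure of the target set gives $R^{2}\rho$ instead of $r^{2}\rho$ and is too weak when $R>r$. Your Fubini argument, bounding the volume of $B(y,r/2)\cap\bigl(B^{\prime}(R)\times(-\rho,\rho)\bigr)$ by $Cr^{2}\rho$, repairs this; the source's proof achieves the same effect by covering $B^{\prime}(R)$ with $O((R/r)^{2})$ horizontal balls of radius $r$, applying the local estimate on each thin sub-cylinder, and summing using $\sum a_{i}^{q/\ell}\leqslant(\sum a_{i})^{q/\ell}$. The two routes are essentially equivalent in content; yours avoids the covering and the bounded-overlap count, at the (negligible) cost of running the sub-mean value inequality twice, once for the sup bound and once inside the integral.
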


We will present the estimates of $J$, $K$ and $H$, which is defined in \eqref{pressure}, in the following several lemmas.
\begin{lem}\label{A2}  Under the assumptions of Lemma \ref{thm3}, we have
	\begin{align}\label{J}
		J\leqslant C\sum_{i=0}^{n} \B_{i} \left(r_{i}^{-1} E_{i}(R)\right)+C\E^{\frac{3}{2}}.
	\end{align}
\end{lem}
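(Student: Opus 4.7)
The plan is to mirror the dyadic template used for $I_1$ in Lemma \ref{lem2}, treating $\pi_0$ as a replacement for one factor of $|(\bm u,\bm b)|$ and invoking the $L^q$ boundedness \eqref{ani} of the Calder\'on--Zygmund-type operator $\J$.

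First I would decompose $\eta\psi=\sum_{i=0}^{n}\phi_i\psi$ via the cutoffs from \eqref{phi}, so that
$$
J=\sum_{i=0}^{n}\int_{-1}^{t}\!\!\int_{U_0(R)}\pi_0\,u_3\,\partial_3\Phi_n\,\phi_i\,\psi\,\dx\,\ds.
$$
On $\mathrm{supp}(\phi_i)\subset Q_i(R)$ the estimate \eqref{Phi_{n}} gives $|\partial_3\Phi_n|\lesssim r_i^{-2}$, so each level is majorized by $r_i^{-2}\int_{Q_i(R)}|\pi_0\,u_3|\,\dx\,\ds$.

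Next, apply H\"older in space with exponents $\bigl(\tfrac{q_0}{q_0-1},q_0\bigr)$ to the pair $(\pi_0,u_3)$, and then in time on $(-r_i^2,0)$ with exponents $\bigl(\tfrac{2q_0}{3},p_0^*\bigr)$ (the same choice, legitimated by $p_0^*>4q_0/(2q_0-5)$ in \eqref{3.2}, that closes the $I_1$ bookkeeping). This produces a spare factor $r_i^{2-3/q_0-2/p_0^*}$ from the length of the time interval; combined with $r_i^{-2}$ from $|\partial_3\Phi_n|$ and the identity $\|u_3\|_{L^{p_0^*}(-r_i^2,0;L^{q_0})}=r_i^{2/p_0^*+3/q_0-1}\B_i$ coming from \eqref{3.2}, all powers of $r_i$ collapse to the clean weight $r_i^{-1}$.

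The heart of the proof is the control of the mixed norm of $\pi_0$, which I would obtain by splitting the source relative to the scale $r_i$: write $f=f\chi_i+f(\chi_0-\chi_i)$ and decompose correspondingly $\pi_0=\J(f\chi_i)+\J(f(\chi_0-\chi_i))$. For the local piece, the $L^q$ boundedness \eqref{ani} gives
$$
\|\J(f\chi_i)\|_{L^{2q_0/3}(-r_i^2,0;\,L^{q_0/(q_0-1)}(\R^3))}\lesssim\|(\bm u,\bm b)\|^{2}_{L^{4q_0/3}(-r_i^2,0;\,L^{2q_0/(q_0-1)}(U_i(R)))},
$$
and Lemma \ref{energy} turns the right-hand side into $E_i(R)$, producing the first term $C\sum_{i=0}^{n}\B_i(r_i^{-1}E_i(R))$. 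For the nonlocal remainder, the global bound via \eqref{ani} yields $\E$ in place of $E_i(R)$; the excess $r_i^{-1}$ growth is tamed by trading one of the $u_3$ factors for an additional $r_i^{1/2}\E^{1/2}$ coming from the H\"older interpolation $\|u_3\|_{L^3(Q_i(R))}\lesssim r_i^{1/2}\E^{1/2}$ together with the summability \eqref{B} of $\sum\B_i$, which delivers the bulk contribution $C\E^{3/2}$.

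The main obstacle is precisely this last step: the operator $\J$ is nonlocal, so a naive global bound $\|\pi_0\|_{L^{q_0/(q_0-1)}(\R^3)}\lesssim\|f\|_{L^{q_0/(q_0-1)}(\R^3)}$ multiplied by the geometric weight $r_i^{-1}$ would not sum in $i$. The source-splitting device above is standard, but its bookkeeping has to be done carefully so that the nonlocal part only contributes the harmless bulk term $C\E^{3/2}$; this is the reason the slack exponent $p_0^*$ with $p_0^*>4q_0/(2q_0-5)$ is built into \eqref{3.2}, as it furnishes exactly the room in time H\"older needed to separate the $E_i(R)$ contribution from the $\E^{3/2}$ one.
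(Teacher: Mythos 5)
Your dyadic decomposition of the test function and your treatment of the local piece $\J(f\chi_i)$ are essentially the paper's term $J_2$ (mirroring $I_1$): the Calder\'on--Zygmund bound \eqref{ani} applied slicewise, Lemma \ref{energy} with $(p,q)=(\tfrac{4q_0}{3},\tfrac{2q_0}{q_0-1})$, and time-H\"older do give $C\sum_i\B_i\,(r_i^{-1}E_i(R))$, and that part is correct. The gap is in the nonlocal remainder $\J\bigl(f(\chi_0-\chi_i)\bigr)$. Your proposed estimate --- global CZ bound plus an interpolation of the single $u_3$ factor --- does not close. Concretely, the level-$i$ contribution is $r_i^{-2}\int_{Q_i(R)}|\J(f(\chi_0-\chi_i))|\,|u_3|\dx\ds$. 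Pairing $L^{3/2}$--$L^{3}$ in space and using only the global bound \eqref{ani} gives $\|\J(f(\chi_0-\chi_i))\|_{L^{3/2}(Q_i(R))}\lesssim r_i^{1/3}\E$ and $\|u_3\|_{L^{3}(Q_i(R))}\lesssim r_i^{1/6}\E^{1/2}$ (not $r_i^{1/2}\E^{1/2}$), hence a term of size $r_i^{-3/2}\E^{3/2}$, whose sum over $i$ diverges; pairing $L^{q_0/(q_0-1)}$--$L^{q_0}$ instead gives $r_i^{-1}\B_i\E$, and $\sum_i r_i^{-1}\B_i$ is not controlled by \eqref{B}. No interpolation of the $u_3$ factor alone can recover the two powers of $r_i$ lost to $|\partial_3\mathrm{\Phi}_n|\lesssim r_i^{-2}$, because the global bound on the far-field pressure restricted to $Q_i(R)$ carries essentially no smallness in $r_i$.

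The missing idea is a quantitative use of harmonicity. Since $f(\chi_0-\chi_i)=\sum_{j<i}f\phi_j$ vanishes near $Q_{i}(R)$, each $\pi_{0,j}=\J(f\phi_j)$ is harmonic in the slab $U_{j+2}(R)$, and the interior estimate for harmonic functions on thin slabs (Lemma \ref{A1}, applied with $\rho=r_k$, $r=r_{j+2}$, $q=\tfrac{q_0}{q_0-1}$, $\ell=\tfrac{2q_0}{2q_0-1}$) converts the $L^{q_0/(q_0-1)}$ norm of $\pi_{0,j}$ over the thin slab $U_k(R)$ into its $L^{2q_0/(2q_0-1)}$ norm over $U_j(R)$ with a gain $r_k^{1-\frac{1}{q_0}}r_j^{-1-\frac{1}{2q_0}}$. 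This is the paper's $J_1$: after time-H\"older one obtains the factor $r_k^{1-\frac{2}{p_0^*}-\frac{5}{2q_0}}r_j^{-1-\frac{1}{2q_0}}$, and the hypothesis $p_0^*>\frac{4q_0}{2q_0-5}$ is imposed precisely so that the exponent of $r_k$ is positive, making the sum over $k\geqslant j+4$ geometric and collapsing the double sum to $C\sum_j\B_j\,(r_j^{-1}E_j(R))$ --- not to $C\E^{3/2}$. (The $\E^{3/2}$ term in \eqref{J} actually comes from the four coarse scales $k\leqslant 3$, bounded crudely by $\|\pi_0\|_{L^{3/2}}\|u_3\|_{L^{3}}$ over $Q_0(R)$, a piece your write-up omits.) Without Lemma \ref{A1}, or an equivalent mean-value/decay estimate for the far-field pressure, the argument does not go through.
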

\begin{proof}
	Let $\pi_{0,j}=\J\left(\left(\bm{u}\otimes\bm{u}-\bm{b}\otimes\bm{b}\right)\cdot\phi_{j}\right)$.
	\begin{align}\label{J'}
		J=&\sum_{k=0}^{n}\int_{-1}^{t} \int_{U_{0}(R)} \pi_{0}\cdot u_{3} \cdot \partial_{3}\mathrm{\Phi}_{n} \phi_{k} \psi\dx\ds      \notag\\
		=&\sum_{k=4}^{n}\sum_{j=0}^{n}\int_{-1}^{t} \int_{U_{0}(R)}  \pi_{0,j}\cdot u_{3} \cdot \partial_{3}\mathrm{\Phi}_{n} \phi_{k} \psi\dx  \text{d}s  \notag\\
		&+\sum_{k=0}^{3}\int_{-1}^{t} \int_{U_{0}(R)} \pi_{0}\cdot u_{3} \cdot \partial_{3}\mathrm{\Phi}_{n} \phi_{k} \psi\dx  \text{d}s  \notag\\
		=&\sum_{k=4}^{n}\sum_{j=0}^{k-4}\int_{-1}^{t} \int_{U_{0}(R)}  \pi_{0,j}\cdot u_{3} \cdot \partial_{3}\mathrm{\Phi}_{n} \phi_{k} \psi\dx  \text{d}s  \notag\\
		&+\sum_{k=4}^{n}\sum_{j=k-3}^{n}\int_{-1}^{t} \int_{U_{0}(R)}  \pi_{0,j}\cdot u_{3} \cdot \partial_{3}\mathrm{\Phi}_{n} \phi_{k} \psi\dx  \text{d}s  \notag\\
		&+\sum_{k=0}^{3}\int_{-1}^{t} \int_{U_{0}(R)} \pi_{0}\cdot u_{3} \cdot \partial_{3}\mathrm{\Phi}_{n} \phi_{k} \psi\dx  \text{d}s  \notag\\
		\eqdefa &J_{1}+J_{2}+J_{3}.
	\end{align}	
	By \eqref{Phi_{n}}, \eqref{ani}, Lemma \ref{energy} and Lemma \ref{A1} with $\rho=r_{k}, r=r_{j+2}$, we have
	\begin{align}\label{J_{1}}
		J_{1}
		=&\sum_{j=0}^{n-4}\sum_{k=j+4}^{n}\int_{-1}^{t} \int_{U_{0}(R)} \pi_{0,j}\cdot u_{3} \cdot \partial_{3}\mathrm{\Phi}_{n} \phi_{k} \psi\dx  \ds  \notag\\
		\lesssim&\sum_{j=0}^{n-4}\sum_{k=j+4}^{n} r_{k}^{-2}\int_{-r_{k}^2}^{0}\|\pi_{0,j}\|_{L^{\frac{q_{0}}{q_{0}-1}}(U_{k}(R))}\|u_{3}\|_{L^{q_{0}}(U_{k}(R))} \ds  \notag\\
		\lesssim&\sum_{j=0}^{n-4}\sum_{k=j+4}^{n} r_{k}^{-1-\frac{1}{q_{0}}}r_{j}^{-1-\frac{1}{2q_{0}}}\int_{-r_{k}^2}^{0}\|\left(\bm{u},\bm{b}\right)\|_{L^{\frac{4q_{0}}{2q_{0}-1}}(U_{j}(R))}^{2}\|u_{3}\|_{L^{q_{0}}(U_{j}(R))} \ds  \notag\\
		\lesssim&\sum_{j=0}^{n-4}\sum_{k=j+4}^{n} r_{k}^{1-\frac{2}{p_{0}^*}-\frac{5}{2q_{0}}}r_{j}^{-1-\frac{1}{2q_{0}}}\|\left(\bm{u},\bm{b}\right)\|_{L^{\frac{8q_{0}}{3}}\left(-r_{j}^2,0;L^{\frac{4q_{0}}{2q_{0}-1}}(U_{j}(R))\right)}^2\notag\\
		&\qquad\qquad\quad \times\|u_{3}\|_{L^{p_{0}^*}\left(-r_{j}^2,0;L^{q_{0}}(U_{j}(R))\right)}\notag\\
		\lesssim&\sum_{i=0}^{n} \B_{i} \left(r_{i}^{-1} E_{i}(R)\right).
	\end{align}
	By \eqref{Phi_{n}}, \eqref{ani} and Lemma \ref{energy}, we have
	\begin{align}\label{J_{2}}
		J_{2}=&\sum_{k=4}^{n}\int_{-1}^{t} \int_{U_{0}(R)} \J\left(\left(\bm{u}\otimes\bm{u}-\bm{b}\otimes\bm{b}\right)\cdot\chi_{k-3}\right)\cdot u_{3} \cdot \partial_{3}\mathrm{\Phi}_{n} \phi_{k} \psi\dx  \ds  \notag\\
		\lesssim&\sum_{k=4}^{n}r_{k}^{-2}\int_{-r_{k}^2}^{0}\|\J\left(\left(\bm{u}\otimes\bm{u}-\bm{b}\otimes\bm{b}\right)\cdot\chi_{k-3}\right)\|_{L^{\frac{q_{0}}{q_{0}-1}}(U_{k}(R))}\|u_{3}\|_{L^{q_{0}}(U_{k}(R))} \ds  \notag\\
		\lesssim& \sum_{k=0}^{n} r_{k}^{-2}\int_{-r_{k}^2}^{0}\|\left(\bm{u},\bm{b}\right)\|_{L^{\frac{2q_{0}}{q_{0}-1}}(U_{k}(R))}^{2} \|u_{3}\|_{L^{q_{0}}(U_{k}(R))} \ds  \notag\\
		\lesssim&\sum_{i=0}^{n} \B_{i} \left(r_{i}^{-1} E_{i}(R)\right),
	\end{align}
	which is analogous to \eqref{I_{1}}. By \eqref{Phi_{n}} and \eqref{ani}, we have
	\begin{align}\label{J_{3}}
		J_{3}\lesssim\int_{-1}^{0}\|\pi_{0}\|_{L^{\frac{3}{2}}(U_{0}(R))} \|u_{3}\|_{L^{3}(U_{0}(R))} \ds\lesssim\int_{-1}^{0}\|\left(\bm{u},\bm{b}\right)\|_{L^{3}(U_{0}(R))}^3 \ds\lesssim\E^{\frac{3}{2}}.
	\end{align}
	
	Summing up all the estimates of \eqref{J'}, \eqref{J_{1}}, \eqref{J_{2}} and \eqref{J_{3}}, we have \eqref{J}.\qed
\end{proof}

\begin{lem}\label{A3} Under the assumptions of Lemma \ref{thm3}, we have
	\begin{align}\label{K}
		K\leqslant \frac{C\E^{\frac{1}{2}}}{R-\rho} \sum_{i=0}^{n} r_{i}^{\frac{1}{2}}\left(r_{i}^{-1} E_{i}(R)\right)+\frac{C}{R-\rho}\E^{\frac{3}{2}}.
	\end{align}
\end{lem}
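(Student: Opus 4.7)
The strategy parallels the treatment of $J$ in Lemma \ref{A2} and of $I_2$ in Lemma \ref{lem2}, adapted to the pressure--velocity structure of $K$. First split $\nabla(\eta\psi) = \psi\nabla\eta + \eta\nabla\psi$ to write $K = K_a + K_b$.

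For $K_a$, the $\nabla\eta$ piece is supported in $A_0$ where $\mathrm{\Phi}_n\leqslant c_2$ by \eqref{Phi_{n}}, and $|\nabla\eta|$ is bounded. Hölder with exponents $(3/2,3)$, the $L^{3/2}$ boundedness \eqref{ani} of $\J$, and Lemma \ref{energy} with $(p,q)=(4,3)$ give $K_a \lesssim \|\pi_0\|_{L^{3/2}(Q_0(R))}\|\bm{u}\|_{L^3(Q_0(R))}\lesssim \E^{3/2}$, which is absorbed into $\frac{C\E^{3/2}}{R-\rho}$ since $R-\rho\leqslant 1/2$.

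For $K_b$, the $\nabla\psi$ piece carries a factor $1/(R-\rho)$; inserting $\sum_{k=0}^{n}\phi_k$ in $(x_3,t)$ and using $\mathrm{\Phi}_n\leqslant Cr_k^{-1}$ on $\mathrm{supp}\,\phi_k$ yields
\[
K_b \lesssim \frac{1}{R-\rho}\sum_{k=0}^{n} r_k^{-1}\int_{Q_k(R)}|\pi_0||\bm{u}|\,dx\,ds.
\]
Following \eqref{J'}, I would then decompose $\pi_0=\sum_{j=0}^{n}\pi_{0,j}$ with $\pi_{0,j}=\J(f\phi_j)$, $f=\bm{u}\otimes\bm{u}-\bm{b}\otimes\bm{b}$, and split the double sum into the near/self part $j\geqslant k-3$ and the far part $j\leqslant k-4$. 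In the near regime, \eqref{ani} together with Lemma \ref{energy} and Hölder in time gives $\|\pi_{0,j}\|_{L^{3/2}(Q_k(R))}\lesssim r_j^{1/3}E_j(R)$; in the far regime, $\pi_{0,j}(\cdot,s)$ is spatially harmonic on a neighborhood of $U_k(R)$ since $\mathrm{supp}\,\phi_j$ is disjoint from $Q_{j+2}\supset Q_k$, and Lemma \ref{A1} with $\rho=r_k$, $r\sim r_j$, $q=\ell=3/2$ supplies the geometric factor $(r_k/r_j)^{2/3}$, giving $\|\pi_{0,j}\|_{L^{3/2}(Q_k(R))}\lesssim r_k^{2/3}r_j^{-1/3}E_j(R)$. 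Pairing either bound with $\|\bm{u}\|_{L^3(Q_k(R))}\lesssim r_k^{1/6}E_k(R)^{1/2}$ and summing the geometric series in $|k-j|$, using $E_j\leqslant \E$, both regimes collapse to a per-$k$ bound of the form $\E\,r_k^{-1/2}E_k(R)^{1/2}$.

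Hence $K_b \lesssim \frac{\E}{R-\rho}\sum_k r_k^{1/2}\bigl(r_k^{-1}E_k(R)\bigr)^{1/2}$. A Cauchy--Schwarz (noting $\sum r_k^{1/2}<+\infty$) bounds the right side by $\frac{\E}{R-\rho}Y^{1/2}$ with $Y=\sum_i r_i^{1/2}(r_i^{-1}E_i(R))$, and the final Young inequality $\E Y^{1/2}\leqslant \epsilon\E^{1/2}Y+C_\epsilon\E^{3/2}$ delivers the form advertised in \eqref{K}. The main technical obstacle is the off-diagonal bookkeeping in the pressure sum: the geometric decay $(r_k/r_j)^{2/3}$ from Lemma \ref{A1} is essential to make the far double sum absolutely convergent, and the concluding Cauchy--Schwarz/Young step is what upgrades the intermediate $E_k^{1/2}$ factor into the target weight $E_k(R)$ with the correct $\E^{1/2}$ outside.
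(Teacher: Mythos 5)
Your overall architecture matches the paper's: split off the $\nabla\eta$ piece (bounded by $\E^{3/2}$), reduce the $\nabla\psi$ piece to $\frac{1}{R-\rho}\sum_k r_k^{-1}\int_{Q_k(R)}|\pi_0||\bm{u}|$, decompose $\pi_0=\sum_j\pi_{0,j}$, and treat the near part by \eqref{ani} and the far part by harmonicity via Lemma \ref{A1}. However, the quantitative bookkeeping has a genuine gap in two places. First, your far-field bound $\|\pi_{0,j}\|_{L^{3/2}(Q_k(R))}\lesssim r_k^{2/3}r_j^{-1/3}E_j(R)$ is not sharp enough: the H\"older inequality in time must be taken over the short interval $(-r_k^2,0)$ (gaining $r_k^{1/3}$), not over $(-r_j^2,0)$, which yields $\|\pi_{0,j}\|_{L^{3/2}(Q_k(R))}\lesssim (r_k/r_j)^{2/3}r_k^{1/3}E_j(R)=r_k\,r_j^{-2/3}E_j(R)$. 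With your weaker version, the per-pair term is $r_k^{-1/6}r_j^{-1/3}E_jE_k^{1/2}$ and the sum over $k\geqslant j+4$ of $r_k^{-1/6}=2^{k/6}$ diverges as $n\to\infty$, so the far double sum cannot be closed in either order of summation.

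Second, and more decisively, your choice to bound $E_j\leqslant\E$ and retain only the half power $E_k(R)^{1/2}$ yields (as you correctly state) the per-$k$ bound $\E\,r_k^{-1/2}E_k(R)^{1/2}=\E\,(r_k^{-1}E_k(R))^{1/2}$; but your next line replaces this by $\E\sum_k r_k^{1/2}(r_k^{-1}E_k(R))^{1/2}=\E\sum_kE_k(R)^{1/2}$, silently inserting a factor $r_k^{1/2}=2^{-k/2}$. Without that factor the concluding Cauchy--Schwarz fails ($\sum_k 1$ or $\sum_kr_k^{-1/2}$ is not uniformly bounded in $n$), and the Young step cannot recover the target, since any splitting of $\E(r_k^{-1}E_k)^{1/2}$ into $\E^{1/2}r_k^{1/2}(r_k^{-1}E_k)$ plus a remainder forces a divergent series $\sum_kr_k^{-1/2}\E^{3/2}$. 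The paper avoids this entirely by keeping a \emph{full} power of the local energy at the coarse scale: since $U_k(R)\subseteq U_j(R)$ for $k\geqslant j$, one bounds $\|\bm{u}(s)\|_{L^3(U_k(R))}\leqslant\|(\bm{u},\bm{b})(s)\|_{L^3(U_j(R))}$ pointwise in time, obtains $\int_{-r_k^2}^0\|(\bm{u},\bm{b})\|^3_{L^3(U_j(R))}\,{\rm d}s\leqslant r_k^{1/2}\|(\bm{u},\bm{b})\|^3_{L^4(-r_k^2,0;L^3(U_j(R)))}\lesssim r_k^{1/2}E_j(R)^{3/2}$, and only then writes $E_j^{3/2}\leqslant\E^{1/2}E_j$; summing over $k\geqslant j+4$ for fixed $j$ gives directly $\frac{\E^{1/2}}{R-\rho}\sum_jr_j^{1/2}(r_j^{-1}E_j(R))$ with no Cauchy--Schwarz or Young step needed. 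You should rework $K_1$ along these lines (the monotonicity $E_k(R)\leqslant E_j(R)$ for $k\geqslant j$ would also rescue your pairing, but only in combination with the sharp far-field bound).
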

\begin{proof}
	\begin{align}\label{K'}
		K=&\int_{-1}^{t} \int_{U_{0}(R)} \pi_{0} \bm{u} \cdot \mathrm{\Phi}_{n} \eta \nabla\psi\dx  \ds +\int_{-1}^{t} \int_{U_{0}(R)} \pi_{0} \bm{u} \cdot \mathrm{\Phi}_{n} \nabla\eta \psi\dx  \ds\notag\\
		\lesssim&\sum_{k=0}^{n}\int_{-1}^{t} \int_{U_{0}(R)} \pi_{0} \bm{u} \cdot \mathrm{\Phi}_{n} \phi_{k} \nabla\psi\dx  \ds+\int_{-1}^{0}\int_{U_{0}(R)}|\pi_{0}| |\bm{u}|\dx\ds\notag\\
		\lesssim&\sum_{k=4}^{n}\sum_{j=0}^{n}\int_{-1}^{t} \int_{U_{0}(R)} \pi_{0,j} \ \bm{u} \cdot \mathrm{\Phi}_{n} \phi_{k} \nabla\psi\dx  \ds\notag\\
		&+\sum_{k=0}^{3}\int_{-1}^{t} \int_{U_{0}(R)} \pi_{0} \bm{u} \cdot \mathrm{\Phi}_{n} \phi_{k} \nabla\psi\dx  \ds+\int_{-1}^{0}\int_{U_{0}(R)}|\pi_{0}||\bm{u}|\dx\ds\notag\\
		\lesssim&\sum_{k=4}^{n}\sum_{j=0}^{k-4}\int_{-1}^{t} \int_{U_{0}(R)} \pi_{0,j} \ \bm{u} \cdot \mathrm{\Phi}_{n} \phi_{k} \nabla\psi\dx  \ds\notag\\
		&+\sum_{k=4}^{n}\sum_{j=k-3}^{n}\int_{-1}^{t} \int_{U_{0}(R)} \pi_{0,j} \ \bm{u} \cdot \mathrm{\Phi}_{n} \phi_{k} \nabla\psi\dx  \ds+\frac{1}{R-\rho}\int_{-1}^{0}\int_{U_{0}(R)}|\pi_{0}||\bm{u}|\dx\ds\notag\\
		\eqdefa &K_{1}+K_{2}+K_{3}.
	\end{align}
	Analogously with \eqref{J_{1}}, we have
	\begin{align}\label{K_{1}}
		K_{1}=&\sum_{j=0}^{n-4}\sum_{k=j+4}^{n}\int_{-1}^{t} \int_{U_{0}(R)} \pi_{0,j} \ \bm{u} \cdot \mathrm{\Phi}_{n} \phi_{k} \nabla\psi\dx  \ds\notag\\
		\lesssim&\frac{1}{R-\rho}\sum_{j=0}^{n-4}\sum_{k=j+4}^{n}r_{k}^{-1}\int_{-r_{k}^2}^{0}\|\pi_{0,j}\|_{L^{\frac{3}{2}}(U_{k}(R))}\|\bm{u}\|_{L^{3}(U_{k}(R))}\ds\notag\\
		\lesssim&\frac{1}{R-\rho}\sum_{j=0}^{n-4}\sum_{k=j+4}^{n}r_{k}^{-\frac{1}{3}}r_{j}^{-\frac{2}{3}}\int_{-r_{k}^2}^{0}\|\left(\bm{u},\bm{b}\right)\|_{L^{3}(U_{j}(R))}^{3}\ds\notag\\
		\lesssim&\frac{1}{R-\rho}\sum_{j=0}^{n-4}\sum_{k=j+4}^{n}r_{k}^{\frac{1}{6}}r_{j}^{-\frac{2}{3}}\|\left(\bm{u},\bm{b}\right)\|_{L^{4}\left(-r_{k}^2,0;L^{3}(U_{j}(R))\right)}^{3}\notag\\
		\lesssim&
		\frac{\E^{\frac{1}{2}}}{R-\rho} \sum_{i=0}^{n} r_{i}^{\frac{1}{2}}\left(r_{i}^{-1} E_{i}(R)\right).
	\end{align}
	Analogously with \eqref{J_{2}}, we have
	\begin{align}\label{K_{2}}
		K_{2}=&\sum_{k=4}^{n}\int_{-1}^{t} \int_{U_{0}(R)} \J\left(\left(\bm{u}\otimes\bm{u}-\bm{b}\otimes\bm{b}\right)\cdot\chi_{k-3}\right) \ \bm{u} \cdot \mathrm{\Phi}_{n} \phi_{k} \nabla\psi\dx  \ds\notag\\
		\lesssim&\frac{1}{R-\rho}\sum_{k=0}^{n}r_{k}^{-1}\int_{-r_{k}^2}^{0}\|\left(\bm{u},\bm{b}\right)\|_{L^{3}(U_{k}(R))}^3\ds\notag\\
		\lesssim&\frac{\E^{\frac{1}{2}}}{R-\rho} \sum_{i=0}^{n} r_{i}^{\frac{1}{2}}\left(r_{i}^{-1} E_{i}(R)\right).
	\end{align}
	Analogously with \eqref{J_{3}}, we have
	\begin{align}\label{K_{3}}
		K_{3}\lesssim\frac{1}{R-\rho}\int_{-1}^{0}\|\left(\bm{u},\bm{b}\right)\|_{L^{3}(U_{0}(R))}^3 \ds\lesssim\frac{1}{R-\rho}\E^{\frac{3}{2}}.
	\end{align}
	Summing up \eqref{K'}, \eqref{K_{1}}, \eqref{K_{2}} and \eqref{K_{3}}, we have \eqref{K}.\qed
\end{proof}

\begin{lem}\label{A4}  Under the assumptions of Lemma \ref{thm3}, we have
	\begin{equation}\label{H}
		H\leqslant \frac{C}{(R-\rho)^{3}} \E^{\frac{3}{2}}.
	\end{equation}
\end{lem}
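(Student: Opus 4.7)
My plan to prove Lemma \ref{A4} rests on the harmonic structure of $\pi_h = \pi - \pi_0$. Taking the divergence of the first equation in \eqref{MHD} and using $\nabla \cdot \bm{u} = \nabla \cdot \bm{b} = 0$ gives $\Delta \pi = \partial_i\partial_j(u_iu_j - b_ib_j)$ in distributions, while by construction $\Delta \pi_0 = \partial_i\partial_j[(u_iu_j - b_ib_j)\chi_0]$. Hence $\Delta \pi_h = 0$ on the interior of $\{\chi_0 \equiv 1\}$, which contains $Q_1(R)$; in particular, for each $t \in (-\tfrac14, 0)$, $\pi_h(\cdot, t)$ is harmonic on the spatial slab $U_1(R) = B'(R) \times (-\tfrac12, \tfrac12) \subset \R^3$.

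The central step is the standard $\R^3$ gradient estimate for harmonic functions with $L^{3/2}$ right-hand side. For $x \in \mathrm{supp}(\psi) \times (-\tfrac12, \tfrac12) \subset U_1(R)$, since $\mathrm{supp}(\psi) \subset B'((R+\rho)/2)$ the spatial distance from $x$ to $\partial U_1(R)$ is at least $(R-\rho)/2$, and therefore
\begin{equation*}
|\nabla\pi_h(x, t)| \leqslant \frac{C}{(R-\rho)^3}\|\pi_h(\cdot, t)\|_{L^{3/2}(U_1(R))},\quad t \in (-\tfrac14, 0).
\end{equation*}
I will then bound $H$ via H\"older with the triple $(\infty, 3, \tfrac32)$, together with: a direct Gaussian calculation (integrating $\Phi_n^{3/2}$ first in $x_3$, then in $t$) that yields $\|\Phi_n\eta\psi\|_{L^{3/2}(\R^3\times(-1,0))} \leqslant C$ uniformly in $n$; Lemma \ref{energy}, giving $\|\bm{u}\|_{L^3(Q(1))} \lesssim \E^{1/2}$; and the pressure bound $\|\pi_h\|_{L^{3/2}(Q(1))} \lesssim \E$, obtained from $\pi_h = \pi - \pi_0$, the $L^{3/2}$-boundedness \eqref{ani} of $\J$, and $\|f\|_{L^{3/2}} \lesssim \|(\bm{u},\bm{b})\|_{L^3(Q(1))}^2 \lesssim \E$ (absorbing the absolute bound on $\|\pi\|_{L^{3/2}(Q(1))}$ from the suitable-weak-solution definition into $\E$, which may be assumed $\geqslant 1$). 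Multiplying these factors gives a contribution of order $(R-\rho)^{-3}\E^{3/2}$ from the part of $H$ supported in $Q_1(R)$.

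The main obstacle is that $\mathrm{supp}(\Phi_n\eta\psi)$ extends in $x_3$ and $t$ beyond $Q_1(R)$, so the harmonic gradient estimate does not apply on the outer annulus $A_0(R) = Q_0(R)\setminus Q_1(R)$. To handle this piece I plan to integrate by parts on $A_0(R)$ (using $\nabla\cdot\bm{u} = 0$), producing a volume integral of $\pi_h\bm{u}\cdot\nabla(\Phi_n\eta\psi)$ on $A_0(R)$ together with a surface term on $\partial Q_1(R)$. On $A_0(R)$, the bounds \eqref{Phi_{n}} with $j = 0$ give $\Phi_n, |\partial_3\Phi_n| \leqslant C$ and $|\nabla\psi| \leqslant C(R-\rho)^{-1}$, so the volume integral is controlled via H\"older by $C(R-\rho)^{-1}\|\pi_h\|_{L^{3/2}}\|\bm{u}\|_{L^3} \lesssim (R-\rho)^{-1}\E^{3/2} \leqslant (R-\rho)^{-3}\E^{3/2}$ since $R-\rho \leqslant \tfrac12$; the surface term is estimated using the harmonic bound on $\pi_h$ applied at a slightly interior surface (say $x_3 = \pm 3/8$), where the distance to $\partial U_1(R)$ is still positive. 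Together these yield the claimed estimate.
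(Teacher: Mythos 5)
Your overall strategy (mean--value gradient estimate for the harmonic part $\pi_h$ on the inner region, integration by parts on the outer region) is the same as the paper's, but the way you close the inner estimate has a genuine gap. The harmonic gradient bound is a statement for each fixed time: it gives $\|\nabla\pi_h(\cdot,t)\|_{L^\infty_x}\leqslant C(R-\rho)^{-3}\|\pi_h(\cdot,t)\|_{L^{3/2}_x(U_1(R))}$, i.e. $\nabla\pi_h\in L^{3/2}_tL^\infty_x$ --- \emph{not} $\nabla\pi_h\in L^\infty$ of space--time, which is what your H\"older triple $(\infty,3,\tfrac32)$ with the factors $\|\pi_h\|_{L^{3/2}(Q(1))}$, $\|\bm u\|_{L^3(Q(1))}$, $\|\mathrm{\Phi}_n\eta\psi\|_{L^{3/2}}$ requires. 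If you instead apply H\"older in $x$ first and then in $t$ with the regularity you actually have ($\pi_h\in L^{3/2}_t$, $\bm u\in L^4_tL^3_x$ from Lemma \ref{energy}), the third factor must be $\sup$ or a high power in time of $\|\mathrm{\Phi}_n(\cdot,t)\|_{L^{3/2}_{x_3}}\sim(-t+r_n^2)^{-1/6}$, e.g.
\begin{equation*}
\int_{Q_1(R)}|\nabla\pi_h|\,|\bm u|\,\mathrm{\Phi}_n
\lesssim \frac{\|\pi_h\|_{L^{3/2}(Q_1(R))}\,\|\bm u\|_{L^{4}(-1,0;L^{3})}}{(R-\rho)^{3}}\,
\Bigl\|(-t+r_n^2)^{-1/6}\Bigr\|_{L^{12}(-1,0)}
\sim \frac{\E^{3/2}}{(R-\rho)^{3}}\, r_n^{-1/6},
\end{equation*}
which is not uniform in $n$; since the whole point of \eqref{key} is to let $n\to\infty$, this does not prove \eqref{H}. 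This divergence is exactly what the paper's dyadic decomposition over the shells $A_k$ is designed to kill: on $Q_k$ one has $\mathrm{\Phi}_n\sim r_k^{-1}$ but gains $r_k^{1/2}$ from $\|\bm u\|_{L^3(Q_k)}\leqslant r_k^{1/2}\|\bm u\|_{L^4_tL^3_x}$ and $r_k^{2/3}$ from the thin $x_3$--interval, leaving the summable series $\sum_k r_k^{1/6}$ (see \eqref{5.6}--\eqref{3.30}). Your argument can be repaired either by reinstating that decomposition, or by re--choosing the interpolation exponents for $\bm u$ (e.g. $\bm u\in L^\infty_tL^2_x$ paired with $\mathrm{\Phi}_n\in L^3_tL^2_{x_3}$, whose norm is uniformly bounded), but not with the exponents you wrote.

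Two smaller points. First, your claim that every $x\in\mathrm{supp}\,\psi\times(-\tfrac12,\tfrac12)$ has distance at least $(R-\rho)/2$ to $\partial U_1(R)$ is false near $x_3=\pm\tfrac12$, where the distance to the top and bottom of the slab tends to $0$; the constant $C(R-\rho)^{-3}$ in the gradient bound is only available on the smaller slabs $U_k(\frac{R+\rho}{2})$ with $k\geqslant 4$ (where $|x_3|\leqslant\tfrac1{16}$), which is why the paper integrates by parts for $k\leqslant 3$ and uses the mean value property only for $k\geqslant 4$. Second, splitting sharply along $\partial Q_1(R)$ creates a surface term on $\{|x_3|=\tfrac12\}$ that your sketch does not really control ("applying the bound at $x_3=\pm 3/8$" changes the domain); the paper avoids any surface term by decomposing $\eta=\sum_k\phi_k$ with the smooth, compactly supported $\phi_k$ from \eqref{phi}.
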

\begin{proof}
	\begin{align}\label{3.25}
		H=&-\sum_{k=0}^{n}\int_{-1}^{t}\int_{U_{0}(R)} \nabla\pi_{h}\cdot \bm{u}\cdot  \left(\mathrm{\Phi}_{n} \phi_{k} \psi\right)\dx  \ds    \notag\\
		=&\sum_{k=0}^{3}\int_{-1}^{t}\int_{U_{0}(R)} \pi_{h}\cdot \bm{u}\cdot  \nabla\left(\mathrm{\Phi}_{n} \phi_{k} \psi\right)\dx  \ds  -\sum_{k=4}^{n}\int_{-1}^{t}\int_{U_{0}(R)} \nabla\pi_{h}\cdot \bm{u}\cdot  \left(\mathrm{\Phi}_{n} \phi_{k} \psi\right)\dx  \ds  \notag\\
		\eqdefa&H_{1}+H_{2}.
	\end{align}
	Applying \eqref{Phi_{n}}, H\"{o}lder's inequality and the fact that
	\begin{align}
		\|\pi_{h}\|_{L^{\frac{3}{2}}(\R^3)}\leqslant&\|\pi\|_{L^{\frac{3}{2}}(\R^3)}+\|\pi_{0}\|_{L^{\frac{3}{2}}(\R^3)}\lesssim\|\left(\bm{u},\bm{b}\right)\|_{L^{3}(\R^3)}^2,
	\end{align}
	we have
	\begin{align}\label{3.26}
		H_{1}\lesssim\frac{1}{R-\rho}\int_{-1}^{t}\int_{U_{0}(R)}|\pi_{h}||\bm{u}|\dx  \ds 
		\lesssim \frac{1}{R-\rho}\|\pi_{h}\|_{L^{\frac{3}{2}}\left(Q_{0}(1)\right)}\cdot\|\bm{u}\|_{L^{3}\left(Q_{0}(1)\right)}
		\lesssim\frac{1}{R-\rho} \E^{\frac{3}{2}}.
	\end{align}
	Moreover, 
	\begin{align}\label{5.6}
		H_{2}\lesssim&  \sum_{k=4}^{n}  r_{k}^{-1} \int_{Q_{k}(\frac{R+\rho}{2}) }|\nabla \pi_{h}| \cdot|\bm{u}|\dx  \ds  \notag\\
		\lesssim&  \sum_{k=4}^{n}  r_{k}^{-1}\left\|\nabla \pi_{h}\right\|_{L^{\frac{3}{2}}\left(Q_{k}(\frac{R+\rho}{2}) \right)}\|\bm{u}\|_{L^{3}\left(Q_{k}(R) \right)}\notag\\
		\lesssim& \sum_{k=4}^{n} r_{k}^{-\frac{1}{3}}\left\|\nabla \pi_{h}\right\|_{L^{\frac{3}{2}}\left(-r_{k}^{2}, 0 ; L^{\infty}\left(U_{k}(\frac{R+\rho}{2}) \right)\right)}\|\bm{u}\|_{L^{3}\left(Q_{k}(R) \right)}.
	\end{align}
	For any $x^{*} \in U_{k}(\frac{R+\rho}{2})$, we have $ B\left(x^* , \frac{R-\rho}{4}\right) \subset U_{1}(R)$
	due to $k \geqslant 4$ and $|R-\rho| \leqslant \frac{1}{2} .$
	Since $\pi_{h}$ is harmonic in $U_{1}(R),$ using the mean value property, we have
	\begin{align*}
		\left|\nabla \pi_{h}\right|\left(x^{*}\right)  \lesssim  \frac{1}{|R-\rho|^{4}} \int_{B\left(x^* , \frac{R-\rho}{4}\right)}|\pi_{h}|\dx 
		\lesssim \frac{1}{(R-\rho)^{3}}\left\|\pi_{h}\right\|_{L^ \frac{3}{2}\left(U_{1}(R) \right)}.
	\end{align*}
	Hence,
	\begin{align}\label{3.30}
		H_{2}\lesssim& \frac{1}{(R-\rho)^{3}} \sum_{k=4}^{n} r_{k}^{-\frac{1}{3}}\left\|\pi_{h}\right\|_{L^{\frac{3}{2}}\left(-r_{k}^{2}, 0 ; L^{\frac{3}{2}}\left(U_{1}(R)\right)\right)}\|\bm{u}\|_{L^{3}\left(Q_{k}(R)\right)} \notag\\
		\lesssim& \frac{1}{(R-\rho)^{3}} \sum_{k=4}^{n} r_{k}^{\frac{1}{6}}\ \|\left(\bm{u},\bm{b}\right)\|_{L^{4}\left(-r_{k}^{2}, 0 ; L^{3}\left(\R^3\right)\right)}^{3}\notag\\
		\lesssim& \frac{1}{(R-\rho)^{3}}\  \E^{\frac{3}{2}}.
	\end{align}
	Summing up \eqref{3.25}, \eqref{3.26} and \eqref{3.30} , we have \eqref{H}.\qed
\end{proof}

\begin{lem}\cite[Lemma A.3]{Wang2020}\label{A5}
	For any
	\begin{align*}
		1 \leqslant p_*<p=\frac{2q}{q-3}, \quad 3<q<+\infty,
	\end{align*}
	we have
	\begin{align*}
		\sum_{k=0}^{+\infty} r_{k}^{1-\frac{2}{p_*}-\frac{3}{q}}\left(\int_{-r_{k}^{2}}^{0}\left\|f(\cdot, s)\right\|_{L^q(B(2))}^{p_*} ~ds\right)^{\frac{1}{p_*}} \leqslant C\left\|f \right\|_{L^{p, 1}\left(-1,0;L^{q}(B(2))\right) }.
	\end{align*}
\end{lem}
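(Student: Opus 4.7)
The plan is to reduce the statement to a one-variable inequality on $g(s):=\|f(\cdot,s)\|_{L^{q}(B(2))}$, pass to its decreasing rearrangement, and exploit the exact balance between the growing prefactors $r_{k}^{1-2/p_{*}-3/q}$ and the shrinking time intervals $(-r_{k}^{2},0)$. Setting $\gamma:=\frac{2}{p_{*}}+\frac{3}{q}-1$, the hypothesis $p_{*}<p=\frac{2q}{q-3}$ is equivalent to $\gamma>0$, so that $r_{k}^{1-2/p_{*}-3/q}=2^{k\gamma}$ and the desired inequality becomes
\[
\sum_{k=0}^{\infty}2^{k\gamma}\Big(\int_{-2^{-2k}}^{0}g(s)^{p_{*}}\ds\Big)^{1/p_{*}}\leqslant C\|g\|_{L^{p,1}(-1,0)}.
\]

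First I would invoke the Hardy--Littlewood rearrangement inequality to replace $g$ on $(-2^{-2k},0)$ by its decreasing rearrangement $g^{*}$ on $(0,2^{-2k})$, and then dyadically decompose $(0,2^{-2k})=\bigcup_{j\geqslant k}(2^{-2(j+1)},2^{-2j}]$. Since $g^{*}$ is nonincreasing, this yields $\int_{0}^{2^{-2k}}g^{*}(t)^{p_{*}}\,{\rm d}t\lesssim \sum_{j\geqslant k}2^{-2j}g^{*}(2^{-2j})^{p_{*}}$. Writing $\beta_{j}:=2^{-2j/p_{*}}g^{*}(2^{-2j})$ and applying the elementary inequality $\big(\sum_{j} a_{j}^{p_{*}}\big)^{1/p_{*}}\leqslant \sum_{j} a_{j}$, valid for $a_{j}\geqslant 0$ and $p_{*}\geqslant 1$ (the embedding $\ell^{1}\hookrightarrow \ell^{p_{*}}$), the left-hand side of the claim is bounded by $\sum_{k=0}^{\infty}2^{k\gamma}\sum_{j\geqslant k}\beta_{j}$.

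Next, swapping the order of summation and using $\sum_{k=0}^{j}2^{k\gamma}\lesssim 2^{j\gamma}$ (which is finite precisely because $\gamma>0$), together with the arithmetic identity $\gamma-\tfrac{2}{p_{*}}=-\tfrac{2}{p}$, I would obtain
\[
\sum_{k=0}^{\infty}2^{k\gamma}\sum_{j\geqslant k}\beta_{j}=\sum_{j=0}^{\infty}\beta_{j}\sum_{k=0}^{j}2^{k\gamma}\lesssim \sum_{j=0}^{\infty}2^{-2j/p}g^{*}(2^{-2j}).
\]
The right-hand side is, up to a universal constant depending only on $p$, a dyadic Riemann sum for $\int_{0}^{1}g^{*}(t)\,t^{1/p-1}\,{\rm d}t$, which is the standard equivalent expression for $\|g\|_{L^{p,1}(-1,0)}$; this closes the chain of estimates.

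The main obstacle is the constant-tracking in the dyadic equivalence $\sum_{j\geqslant 0}2^{-2j/p}g^{*}(2^{-2j})\approx \|g\|_{L^{p,1}}$, which rests only on the monotonicity of $g^{*}$ but must be compared to $\int_{0}^{1}g^{*}(t)\,t^{1/p-1}\,{\rm d}t$ with some care. The scheme also makes transparent why the strict inequality $p_{*}<p$ is essential: it enters both through $\gamma>0$, which renders the geometric series in $k$ convergent, and as a cushion absorbing the (generally wasteful) bound $\|\cdot\|_{\ell^{p_{*}}}\leqslant \|\cdot\|_{\ell^{1}}$; at the endpoint $p_{*}=p$ the prefactors $r_{k}^{1-2/p-3/q}\equiv 1$ and the summation in $k$ would fail outright.
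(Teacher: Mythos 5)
Your argument is correct. The paper itself gives no proof of this lemma --- it is imported verbatim from Wang--Wu--Zhang \cite[Lemma A.3]{Wang2020} --- so there is nothing in the text to compare against, but your chain of estimates is complete and each step checks out: the Hardy--Littlewood inequality $\int_{-r_k^2}^{0}|g|^{p_*}\leqslant\int_0^{r_k^2}(g^*)^{p_*}$, the monotonicity bound $\int_0^{2^{-2k}}(g^*)^{p_*}\lesssim\sum_{j\geqslant k}2^{-2j}g^*(2^{-2j})^{p_*}$, the embedding $\ell^1\hookrightarrow\ell^{p_*}$, the interchange of the two sums with $\sum_{k=0}^{j}2^{k\gamma}\lesssim 2^{j\gamma}$ (using $\gamma>0$, i.e.\ $p_*<p$), the identity $\gamma-\tfrac{2}{p_*}=-\tfrac{2}{p}$, and finally the comparison $\sum_{j\geqslant0}2^{-2j/p}g^*(2^{-2j})\lesssim\int_0^1 t^{1/p-1}g^*(t)\,{\rm d}t=\|g\|_{L^{p,1}}$, which follows from monotonicity of $g^*$ on each dyadic block $(2^{-2(j+1)},2^{-2j}]$ with constant $(\ln 4)^{-1}2^{2/p}$; the ``constant-tracking'' you worry about is therefore harmless. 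Your closing remark correctly identifies where $p_*<p$ is used and why the endpoint $p_*=p$ fails.
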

For the sake of simplicity, we define
\begin{align}
	&
	D(\pi,z_{0},r)=r^{-2}\int_{Q(z_{0},r)}\left|\pi\right|^{\frac{3}{2}}\dx\dt,\ F(\bm{u},\bm{b},z_{0},r)=r^{-2}\int_{Q(z_{0},r)}|\bm{u}|^3+|\bm{b}|^3\dx\dt,\label{CD1}\\
	& D(\pi,r)\eqdefa D(\pi,(0,0),r), \ F(\bm{u},\bm{b},r)\eqdefa C(\bm{u},\bm{b},(0,0),r).\label{CD2}
\end{align}
\begin{lem} \label{A6}
	Let $\pi \in L^{\frac{3}{2}}(Q(1))$ solve $\Delta \pi=\nabla\cdot\nabla\cdot\left( \bm{u}\otimes\bm{u}-\bm{b}\otimes\bm{b}\right)$ in the sense of distributions. If there exists a constant $K_{0}$ such that for all $0< r \leqslant R\leqslant 1$,
	\begin{align}\label{sA.18}
		F(\bm{u},\bm{b},r)\leqslant K_{0},
	\end{align}
	then for some $\alpha>0$ and all $0<r \leqslant R$,
	\begin{align}\label{sA.19}
		D(\pi,r) \leqslant C \left(\frac{r}{R}\right)^{\alpha}\cdot D(\pi,R)+C K_{0}.
	\end{align}
\end{lem}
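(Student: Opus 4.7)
The plan is to follow the classical pressure decomposition of Caffarelli--Kohn--Nirenberg, combined with a geometric iteration. For each scale $\rho\in(0,R]$, split $\pi(\cdot,t)=\pi_{0}^{\rho}(\cdot,t)+\pi_{h}^{\rho}(\cdot,t)$ with
\begin{equation*}
	\pi_{0}^{\rho}(\cdot,t):=\J\bigl((\bm{u}\otimes\bm{u}-\bm{b}\otimes\bm{b})(\cdot,t)\,\chi_{B(\rho)}\bigr),\qquad \pi_{h}^{\rho}:=\pi-\pi_{0}^{\rho}.
\end{equation*}
The Fourier definition of $\J$ together with $\Delta\pi=\nabla\cdot\nabla\cdot(\bm{u}\otimes\bm{u}-\bm{b}\otimes\bm{b})$ forces $\Delta\pi_{h}^{\rho}(\cdot,t)=0$ in $B(\rho)$ for a.e.\ $t$. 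Applying the $L^{3/2}$-boundedness \eqref{ani} slicewise and integrating in time gives, for every $0<r\leqslant\rho$,
\begin{equation*}
	\int_{Q(r)}|\pi_{0}^{\rho}|^{3/2}\dx\dt\leqslant\int_{-r^{2}}^{0}\|\pi_{0}^{\rho}(\cdot,t)\|_{L^{3/2}(\R^{3})}^{3/2}\dt\leqslant C\int_{Q(\rho)}(|\bm{u}|^{3}+|\bm{b}|^{3})\dx\dt\leqslant C\rho^{2}K_{0},
\end{equation*}
so that $D(\pi_{0}^{\rho},r)\leqslant C(\rho/r)^{2}K_{0}$. For the harmonic piece, subharmonicity of $|\pi_{h}^{\rho}(\cdot,t)|^{3/2}$ in $B(\rho)$ and the mean value inequality yield, for $r\leqslant\rho/4$,
\begin{equation*}
	D(\pi_{h}^{\rho},r)\leqslant C_{\ast}(r/\rho)\,D(\pi_{h}^{\rho},\rho).
\end{equation*}

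The key move is to choose $\rho$ \emph{comparable to} $r$ rather than to $R$. Fix an integer $N$ with $C_{\ast}/N\leqslant 1/2$. For $0<r\leqslant R/N$ take $\rho=Nr$; combining the previous two bounds with the triangle inequality $D(\pi_{h}^{Nr},Nr)\leqslant C\bigl(D(\pi,Nr)+D(\pi_{0}^{Nr},Nr)\bigr)\leqslant C\bigl(D(\pi,Nr)+K_{0}\bigr)$ gives
\begin{equation*}
	D(\pi,r)\leqslant D(\pi_{0}^{Nr},r)+D(\pi_{h}^{Nr},r)\leqslant CN^{2}K_{0}+\tfrac{1}{2}\bigl(D(\pi,Nr)+CK_{0}\bigr)\leqslant \tfrac{1}{2}D(\pi,Nr)+C'K_{0}.
\end{equation*}
Iterating this $k$ times, as long as $N^{k}r\leqslant R$, produces $D(\pi,r)\leqslant 2^{-k}D(\pi,N^{k}r)+2C'K_{0}$. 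Taking $k=\lfloor\log_{N}(R/r)\rfloor$, so that $R/N\leqslant N^{k}r\leqslant R$, one has $2^{-k}\leqslant 2(r/R)^{\log_{N}2}$, while the trivial inequality $D(\pi,N^{k}r)\leqslant N^{2}D(\pi,R)$ follows from $N^{k}r\geqslant R/N$ and the definition of $D$. This yields the claim with $\alpha=\log_{N}2>0$; the residual range $R/N<r\leqslant R$ is handled trivially since there $(r/R)^{\alpha}\geqslant N^{-\alpha}$.

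The main obstacle is the familiar tension between the two pieces of the decomposition: Calder\'on--Zygmund controls $\pi_{0}^{\rho}$ only in the \emph{global} $L^{3/2}$-norm, so a one-shot decomposition at the outer scale $R$ produces a divergent factor $(R/r)^{2}$ when restricted to $B(r)$. A decomposition at the target scale $\rho=r$ kills this factor but destroys any harmonic decay. The compromise $\rho=Nr$ absorbs the singular part into a uniform constant times $K_{0}$ while conceding only a single geometric factor $1/N$ of harmonic decay per step; the iteration then compounds these into the desired power-law decay $(r/R)^{\alpha}$.
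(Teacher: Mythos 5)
Your proof is correct and takes essentially the same approach as the paper: the same decomposition $\pi=\pi_{0}+\pi_{h}$ via $\J$ applied to the nonlinearity cut off to $B(\rho)$, the same Calder\'on--Zygmund bound $D(\pi_{0},r)\lesssim(\rho/r)^{2}K_{0}$ and mean-value bound $D(\pi_{h},r)\lesssim(r/\rho)D(\pi_{h},\rho)$, and the same one-step inequality iterated geometrically. The only cosmetic difference is that you fix the scale ratio $N$ and iterate upward, while the paper fixes $\theta=1/N$ and iterates downward; there is no gap.
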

\begin{proof}
	We claim that for $0<2r< \rho\leqslant R$,
	\begin{align}\label{sA.20}
		D(\pi,r)\leq C_{2}\left(\frac{r}{\rho}\right) D(\pi,\rho)+C_{2}\left(\frac{\rho}{r}\right)^2 F(\bm{u},\bm{b},\rho).
	\end{align}
	Actually, we write $\pi=\pi_{0}+\pi_{h}$,
	where $\pi_{0}=\J\left(\left( \bm{u}\otimes\bm{u}-\bm{b}\otimes\bm{b}\right) \cdot \chi_{B(\rho)}\right)$ and
	\begin{align}\label{sA.21}
		D(\pi_{0},r)\lesssim r^{-2}\| \bm{u}\otimes\bm{u}-\bm{b}\otimes\bm{b}\|_{L^{\frac{3}{2}}(Q(\rho))}^\frac{3}{2}\lesssim \left(\frac{\rho}{r}\right)^2 F(\bm{u},\bm{b},\rho).
	\end{align}
	$\pi_{h}$ is harmonic in $B(\rho)$ and the mean value property of $\pi_{h}$ implies that
	\begin{align}\label{sA.22}
		D(\pi_{h},r)\lesssim r \int_{-r^2}^{0}\|\pi_{h}\|_{L^{\infty}(B(r))}^{\frac{3}{2}}\dt\lesssim \left(\frac{r}{\rho}\right) D(\pi_{h},\rho)\lesssim\left(\frac{r}{\rho}\right)\left( D(\pi,\rho)+D(\pi_{0},\rho)\right).
	\end{align}
	Summing up \eqref{sA.21} and \eqref{sA.22}, we have \eqref{sA.20}.
	
	Let $\theta \in(0,\frac{1}{2})$. By \eqref{sA.18} and \eqref{sA.20}, we have that for $0<r\leqslant R$,
	\begin{align}
		D(\pi,\theta r)\leqslant C_{2}\cdot \theta\  D(\pi,r)+C_{2}K_{0}\ \theta^{-2}.
	\end{align}
	We choose $\theta$ such that $C_{2}\cdot \theta = \frac{1}{2}$.  For $\theta R < r\leq R$, we have
	\begin{align}
		D(\pi,r)\leqslant \theta^{-2} R^{-2}\int_{Q(R)}|\pi|^{\frac{3}{2}}\dx\dt.
	\end{align}
	This together with a standard iteration yields \eqref{sA.19} with $\alpha=-\frac{\ln 2}{\ln \theta}>0$.
	\qed
\end{proof}
\begin{lem}\cite[Theorem 1.1]{Vyalov2008}\label{A9}
	There exists an absolute constant $\varepsilon_{0}>0$ with the following property. If $(\bm{u},\bm{b},\pi)$ is a suitable weak solution to the MHD equations in $Q(1)$ satisfying that for some $0<R\leqslant 1$, $Q(z_{0},R)\subseteq Q(1)$ and
	\begin{align}\label{claim1}
		R^{-2} \int_{Q(z_{0},R)}\left(|\bm{u}|^{3}+|\bm{b}|^{3}+|\pi|^{\frac{3}{2}}\right) \dx\dt<\varepsilon_{0},
	\end{align}
	then the solution is regular at $z_{0}=(x_{0},t_{0})$.
\end{lem}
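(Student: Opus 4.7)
The plan is to adapt the Caffarelli--Kohn--Nirenberg bootstrap \cite{Caffarelli1982} to the MHD setting. By translation invariance I may assume $z_{0}=(0,0)$, so that $Q(R)\subseteq Q(1)$. Introduce the usual scale-invariant local quantities
\begin{align*}
A(r)&=r^{-1}\sup_{t\in(-r^{2},0)}\int_{B(r)}(|\bm{u}|^{2}+|\bm{b}|^{2})\dx,\\
\delta(r)&=r^{-1}\int_{Q(r)}(|\nabla\bm{u}|^{2}+|\nabla\bm{b}|^{2})\dx\dt,
\end{align*}
together with $F(\bm{u},\bm{b},r)$ and $D(\pi,r)$ from \eqref{CD1}--\eqref{CD2}. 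The goal is to produce absolute constants $\theta\in(0,1)$ and $\varepsilon_{0}>0$ such that if \eqref{claim1} holds then
\begin{align*}
F(\bm{u},\bm{b},\theta R)+D(\pi,\theta R)\leqslant \tfrac{1}{2}\bigl(F(\bm{u},\bm{b},R)+D(\pi,R)\bigr),
\end{align*}
which, iterated, yields geometric decay of $F+D$ at $z_{0}$ and hence H\"older regularity via Campanato's criterion.

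The first step is the pressure decay. Following Lemma~\ref{A6}, I split $\pi=\pi_{0}+\pi_{h}$ on $B(R)$ with $\pi_{0}=\J((\bm{u}\otimes\bm{u}-\bm{b}\otimes\bm{b})\chi_{B(R)})$ and $\pi_{h}$ harmonic. Boundedness of $\J$ on $L^{3/2}$ bounds $D(\pi_{0},\rho)$ by $(\rho/R)^{-2}F(\bm{u},\bm{b},R)$, while the mean-value property for $\pi_{h}$ gives $D(\pi_{h},\rho)\lesssim(\rho/R)D(\pi_{h},R)$. Combined,
\begin{align*}
D(\pi,\rho)\leqslant C\tfrac{\rho}{R}D(\pi,R)+C\bigl(\tfrac{\rho}{R}\bigr)^{-2}F(\bm{u},\bm{b},R)\quad\text{for }\rho\leqslant R/2.
\end{align*}

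The second step uses the local energy inequality \eqref{local energy} with a cutoff supported in $Q(R)$ equal to $1$ on $Q(R/2)$. H\"older's inequality together with the pointwise bound $|(\bm{u}\cdot\bm{b})\bm{b}|\leqslant|\bm{u}||\bm{b}|^{2}\leqslant\tfrac{1}{3}|\bm{u}|^{3}+\tfrac{2}{3}|\bm{b}|^{3}$ absorbs the extra magnetic cross term into $F$, producing
\begin{align*}
A(R/2)+\delta(R/2)\leqslant C\bigl(F(\bm{u},\bm{b},R)^{2/3}+F(\bm{u},\bm{b},R)+F(\bm{u},\bm{b},R)^{1/3}D(\pi,R)^{2/3}\bigr).
\end{align*}
Gagliardo--Nirenberg (as in Lemma~\ref{energy}) then gives $F(\bm{u},\bm{b},\theta R)\lesssim\theta^{3}A(R/2)^{3/2}+\theta^{-3}A(R/2)^{3/4}\delta(R/2)^{3/4}$. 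Chaining with the pressure decay and fixing $\theta$ small enough to absorb the $\theta D(\pi,R)$ contribution yields the contraction, provided $\varepsilon_{0}$ is sufficiently small. Iteration produces $F(\bm{u},\bm{b},\theta^{k}R)+D(\pi,\theta^{k}R)\to 0$ geometrically; via Campanato's characterization this implies $\bm{u},\bm{b}\in L^{\infty}$ near $z_{0}$, and standard parabolic bootstrap then gives full regularity at $(0,0)$.

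The main obstacle is the additional magnetic coupling: both the cross term $-2(\bm{u}\cdot\bm{b})\bm{b}\cdot\nabla\varphi$ in the local energy inequality and the $\bm{b}\otimes\bm{b}$ contribution to the pressure must be tracked through the iteration without spoiling the scaling. Because $\mu=\nu=1$, the $\bm{u}$ and $\bm{b}$ terms enter symmetrically and admit bounds of the same form as the pure Navier--Stokes convection, so the iteration proceeds once $\varepsilon_{0}$ is chosen small; the delicate point is merely balancing the powers of $\theta$ from the pressure decay and the interpolation against the quadratic magnetic contribution.
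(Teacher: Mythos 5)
The paper offers no proof of this lemma: it is quoted verbatim from Vyalov's Theorem~1.1 (see also He--Xin), so your attempt is necessarily a different route, and it is the central step of that route that fails. You want the one-step contraction $F(\bm{u},\bm{b},\theta R)+D(\pi,\theta R)\leqslant \frac{1}{2}\bigl(F(\bm{u},\bm{b},R)+D(\pi,R)\bigr)$ by chaining (i) the energy bound $A(R/2)+\delta(R/2)\lesssim F^{2/3}+F+F^{1/3}D^{2/3}$, (ii) the interpolation $F(\theta R)\lesssim \theta^{3}A^{3/2}+\theta^{-3}A^{3/4}\delta^{3/4}$, and (iii) the pressure decay $D(\theta R)\lesssim \theta D(R)+\theta^{-2}F(R)$. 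Write $\varepsilon=F(R)+D(R)$. Then (i) gives $A+\delta\lesssim\varepsilon^{2/3}$, so both $A^{3/2}$ and $A^{3/4}\delta^{3/4}$ are of size $\varepsilon$ --- exactly first order in the quantity being contracted, not higher order. Hence (ii) and (iii) only yield $F(\theta R)+D(\pi,\theta R)\leqslant C\theta^{-3}\varepsilon$: the constant $\theta^{-3}$ multiplies a term \emph{linear} in $\varepsilon$, so neither shrinking $\theta$ nor shrinking $\varepsilon_{0}$ can produce the factor $\frac{1}{2}$. This is precisely why the published proofs do not contract $F+D$ directly. Lin's compactness scheme (used for MHD by He--Xin and by Vyalov) contracts oscillation quantities built from $\bm{u}-(\bm{u})_{Q(r)}$ and $\bm{b}-(\bm{b})_{Q(r)}$: under blow-up the limit solves a \emph{linear} Stokes/heat system, whose solutions decay like $r$, while the nonlinear contributions are quadratic in $\varepsilon$ and hence negligible. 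The original Caffarelli--Kohn--Nirenberg argument instead propagates only \emph{uniform smallness} (not decay) of the scaled quantities over all dyadic scales and all nearby centres, closing the induction by trading multiplicative constants against the gap between the exponents $2/3$ and $1$, and then invokes a separate parabolic local-boundedness iteration.

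Two further points. First, even granting geometric decay of $r^{-2}\int_{Q(z_{0},r)}(|\bm{u}|^{3}+|\bm{b}|^{3})$ at the single centre $z_{0}$, "Campanato's characterization" does not give $\bm{u},\bm{b}\in L^{\infty}$: Campanato requires decay of the \emph{oscillation} at a rate $r^{3\alpha}$ uniformly over nearby centres, so one must first observe that hypothesis \eqref{claim1} transfers to every $z$ with $Q(z,R/2)\subset Q(z_{0},R)$ and run the decay there as well. Second, your handling of the genuinely MHD features (the cross term $(\bm{u}\cdot\bm{b})\bm{b}\cdot\nabla\varphi$ and the $\bm{b}\otimes\bm{b}$ part of the pressure) is correct and is indeed the easy part when $\mu=\nu$; the defect in your argument is inherited wholesale from the Navier--Stokes case.
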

\begin{lem}[stability of singularities]\label{A10}
	Let $\left(\bm{u}_{k},\bm{b}_{k}, \pi_{k}\right)$ be a sequence of suitable weak solutions to the MHD equations \eqref{MHD} in $Q(1)$ such that $(\bm{u}_{k},\bm{b}_{k}) \rightarrow (\bm{v},\bm{B})$ in $L^{3}(Q(1))$, $\pi_{k} \rightharpoonup \Pi$ in $L^{\frac{3}{2}}(Q(1))$. Assume $(\bm{u}_{k},\bm{b}_{k})$ is singular at $z_{k}=(x_{k},t_{k})$, $z_{k}\rightarrow(0,0)$ as $k\rightarrow\infty$. Then $(\bm{v},\bm{B})$ is singular at $(0,0)$.
\end{lem}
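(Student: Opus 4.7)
The plan is to argue by contradiction, combining the $\varepsilon$-regularity criterion (Lemma \ref{A9}) with a transfer of small-scale smallness from the limit $(\bm v, \bm B, \Pi)$ to the sequence $(\bm u_k, \bm b_k, \pi_k)$ at $z_k$. Suppose $(\bm v, \bm B)$ is regular at $(0,0)$. Passing to the limit in the MHD equations shows that $(\bm v, \bm B, \Pi)$ is itself a suitable weak solution on $Q(1)$ (cf.\ \cite{Lin1998}), and the assumed regularity provides an $L^\infty$-bound on $\bm v, \bm B$ over some $Q(\delta)$. This bound gives $F(\bm v, \bm B, r) \leqslant C r^3 \to 0$ as $r\to 0$, while Lemma \ref{A6} applied to the limit pressure with $K_0 = \sup_{r\leqslant R} F(\bm v, \bm B, r) \to 0$ as $R\to 0$ yields $D(\Pi, r) \to 0$ (first take $R$ small to kill the $K_0$-term, then take $r \ll R$ to kill the $(r/R)^\alpha$-term). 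Hence I can fix $r_0>0$ so small that
\begin{equation*}
F(\bm v, \bm B, r_0) + D(\Pi, r_0) < \varepsilon_0/C_*,
\end{equation*}
for a universal constant $C_*$ absorbing the scaling factors to follow.

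For all sufficiently large $k$, the center $z_k$ is close enough to $(0,0)$ that $Q(z_k, r_0/2) \subset Q(r_0)$. The goal is then to show
\begin{equation*}
(r_0/2)^{-2}\int_{Q(z_k, r_0/2)}\bigl(|\bm u_k|^3 + |\bm b_k|^3 + |\pi_k|^{3/2}\bigr)\dx\dt < \varepsilon_0
\end{equation*}
for $k$ large, whence Lemma \ref{A9} gives the regularity of $(\bm u_k, \bm b_k)$ at $z_k$, contradicting the hypothesis. The $|\bm u_k|^3 + |\bm b_k|^3$ part is easy: strong $L^3(Q(1))$-convergence and the triangle inequality, together with $\chi_{Q(z_k, r_0/2)} \leqslant \chi_{Q(r_0)}$, yield
\begin{equation*}
\limsup_{k \to \infty} F(\bm u_k, \bm b_k, z_k, r_0/2) \leqslant C F(\bm v, \bm B, r_0),
\end{equation*}
already smaller than $\varepsilon_0/3$ by the choice of $r_0$.

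The main obstacle is the pressure, since only weak $L^{3/2}$-convergence of $\pi_k$ is available, and weak convergence does not pass to integrals over shrinking cylinders. I would handle it via the classical splitting $\pi_k = p_k + h_k$ with
\begin{equation*}
p_k = \J\bigl((\bm u_k \otimes \bm u_k - \bm b_k \otimes \bm b_k)\chi_{B(r_0)}\bigr),\qquad \Delta h_k = 0 \text{ in } B(r_0) \text{ for a.e. } t.
\end{equation*}
Strong $L^3$-convergence of $(\bm u_k, \bm b_k)$ makes the nonlinearity converge in $L^{3/2}$, and the $L^{3/2}$-boundedness of $\J$ from \eqref{ani} then gives strong convergence $p_k \to p := \J((\bm v \otimes \bm v - \bm B \otimes \bm B)\chi_{B(r_0)})$ in $L^{3/2}(\R^3 \times (-1,0))$. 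The remainder $h_k$ is uniformly bounded in $L^{3/2}(Q(r_0))$ and harmonic in $B(r_0)$ for a.e.\ $t$, so by the interior mean value property (or Lemma \ref{A1}) it is uniformly bounded in $L^{3/2}(-r_0^2, 0; L^\infty(B(3r_0/4)))$. This compactness upgrades the weak convergence $h_k \rightharpoonup \Pi - p$ to strong $L^{3/2}$-convergence on $Q(r_0/2)$. Therefore $\pi_k \to \Pi$ strongly in $L^{3/2}(Q(r_0/2))$, and the same triangle-inequality argument as for the velocity produces
\begin{equation*}
\limsup_{k \to \infty} D(\pi_k, z_k, r_0/2) \leqslant C D(\Pi, r_0) < \varepsilon_0/3.
\end{equation*}
Summing the two estimates delivers the desired $\varepsilon_0$-smallness at $z_k$ for large $k$ and thus the contradiction via Lemma \ref{A9}.
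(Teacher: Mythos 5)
Your overall architecture (contradiction, $\varepsilon$-regularity at $z_k$ via Lemma \ref{A9}, strong $L^3$ convergence for the velocity, and the splitting $\pi_k=p_k+h_k$ with $p_k=\J((\bm{u}_k\otimes\bm{u}_k-\bm{b}_k\otimes\bm{b}_k)\chi_{B(r_0)})$) is the same as the paper's, and the velocity and $p_k$ parts are fine. The gap is in the harmonic part: the claim that uniform boundedness of $h_k$ in $L^{3/2}(-r_0^2,0;L^\infty(B(3r_0/4)))$ ``upgrades'' the weak convergence $h_k\rightharpoonup\Pi-p$ to strong $L^{3/2}$ convergence on $Q(r_0/2)$ does not follow. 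Interior estimates for harmonic functions give compactness only in the space variable; they provide no control whatsoever on the time dependence of $h_k$, and a sequence such as $h_k(x,t)=g_k(t)$ with $g_k$ bounded and weakly null but not strongly null in $L^{3/2}(-r_0^2,0)$ is harmonic in $x$, bounded in $L^{3/2}_tL^\infty_x$, and does not converge strongly. (The local pressure is only normalized through the assumed weak convergence in $L^{3/2}(Q(1))$, which permits exactly such time oscillations.) Without strong convergence, your key inequality $\limsup_k D(\pi_k,z_k,r_0/2)\leqslant C\,D(\Pi,r_0)$ is not established --- weak convergence only yields lower semicontinuity, i.e.\ an inequality in the unusable direction --- so the smallness needed to invoke Lemma \ref{A9} at $z_k$ is not obtained.

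The paper's proof avoids claiming any convergence of the pressure. It applies the one-step decay estimate \eqref{sA.20} of Lemma \ref{A6} to each $\pi_k$ at a fixed pair of radii, takes $\limsup_k$ (legitimate since the radii are fixed), and iterates to get $\tilde D(r)\leqslant C(r/\rho)^{\alpha}\tilde D(\rho)+C\rho^{3}$, where $\tilde D(r)=\limsup_k D(\pi_k,r)$ and the $\rho^3$ comes from $\tilde F(\rho)=F(\bm{v},\bm{B},\rho)\leqslant C\rho^{3}$. The harmonic part is then killed not by compactness but by the factor $(r/\rho)^{\alpha}$ together with the \emph{uniform} bound $\tilde D(\rho)<\infty$: one sends $r\to0$ first and then $\rho\to0$ against the lower bound $\tilde F(r)+\tilde D(r)\geqslant\varepsilon_0/4$ coming from the singularity of each $(\bm{u}_k,\bm{b}_k)$ at $z_k$. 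Your argument becomes correct if you replace the strong-convergence claim for $h_k$ by this radius-decay mechanism (mean value property of $h_k$ giving $D(h_k,z_k,r)\lesssim (r/\rho)\,D(h_k,z_k,\rho)$ uniformly in $k$, with $D(h_k,z_k,\rho)$ uniformly bounded); as written, however, the pressure step is not justified.
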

\begin{proof}
	The proof is similar with \cite[Lemma 2.1]{Rusin2011}. If $(\bm{v},\bm{B})$ is regular at $(0,0)$, then there exists  $\rho_{0}>0$ and for all $0<r<\rho_{0}$,
	\begin{align}\label{A.280}
		r^{-2}\int_{Q(r)}|\bm{v}|^3+|\bm{B}|^3\dx\dt\leqslant C \ r^3.
	\end{align}
	Since $(\bm{u}_{k},\bm{b}_{k})$ is singular at  $z_{k}$, by \eqref{CD1} and Lemma \ref{A9}, we have that for all $0<r< \frac{1}{2}$,
	\begin{align}
		F(\bm{u}_{k},\bm{b}_{k},z_{k},r)+D(\pi_{k},z_{k},r)\geqslant \varepsilon_{0}.
	\end{align}
	For sufficient large $N=N(r)$ and all $k\geqslant N$ , we have $Q(z_{k},r)\in Q(2r)$ and
	\begin{align}
		F(\bm{u}_{k},\bm{b}_{k},2r)+D(\pi_{k},2r)\geqslant \frac{\varepsilon_{0}}{4} .
	\end{align}
	Denote $\tilde{F}(r)=\underset{k\rightarrow\infty}{\limsup}\ F(\bm{u}_{k},\bm{b}_{k},r)$ and $\tilde{D}(r)=\underset{k\rightarrow\infty}{\limsup}\ D(\pi_{k},r)$. For all $0<r< 1$, we have
	\begin{align}
		\tilde{F}(r)+\tilde{D}(r)\geqslant\frac{\varepsilon_{0}}{4}.
	\end{align}
	By \eqref{A.280}, we have that for all $0<r< \rho<\rho_{0}$,
	\begin{align}
		\tilde{F}(r)=F(\bm{v},\bm{B},r)\leqslant Cr^3\leqslant C \rho^3.
	\end{align}
	Applying an analogous argument in Lemma \ref{A6}, we have that for all $r$ with $0<r< \rho$,
	\begin{align}
		\tilde{D}(r) \leqslant C \left(\frac{r}{\rho}\right)^{\alpha}\cdot \tilde{D}(\rho)+C \rho^3.
	\end{align}
	Accordingly, we have for all $0<r< \rho<\rho_{0}$,
	\begin{align}
		C_{3} \left(\frac{r}{\rho}\right)^{\alpha}\cdot \tilde{D}(\rho)+C_{3} \rho^3 \geqslant \frac{\varepsilon_{0}}{4},
	\end{align}
	where the constant $C_{3}$ is independent of $r$ and $\rho$. It leads to a contradiction if we let $r\rightarrow0$ and then $\rho\rightarrow0$. The proof is completed.	\qed
\end{proof}
\begin{lem}\cite[Theorem 1.1]{Wang2013}\label{A7}
	There is an absolute number $\varepsilon_{1}>0$ with the following property. If $(\bm{u},\bm{b},\pi)$  is a suitable weak solution of \eqref{MHD} in $Q(1)$ satisfying that for some $0<R_{0} < 1$ and all $0<r<R_{0}$,
	\begin{align}
		r^{-2} \int_{Q(r)}|\bm{u}|^{3}\dx\dt < \varepsilon_{1},
	\end{align}
	then the solution $(\bm{u},\bm{b})$ is regular at $(0,0)$.
\end{lem}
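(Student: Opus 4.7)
The plan is to reduce the hypothesis on $\bm{u}$ alone to the full Caffarelli--Kohn--Nirenberg type $\varepsilon$-regularity criterion of Lemma \ref{A9}, which requires smallness of $r^{-2}\int_{Q(r)}(|\bm{u}|^3+|\bm{b}|^3+|\pi|^{\frac{3}{2}})\dx\dt$. Accordingly, the task splits into two sub-tasks: (i) propagate the smallness from $|\bm{u}|^3$ to $|\bm{b}|^3$, and (ii) propagate the smallness to the pressure $|\pi|^{\frac{3}{2}}$. Introduce the scale-invariant functionals
\begin{align*}
A(r) &\eqdefa r^{-1}\sup_{t\in(-r^2,0)}\int_{B(r)}(|\bm{u}|^2+|\bm{b}|^2)\dx, \quad E(r)\eqdefa r^{-1}\int_{Q(r)}|\nabla(\bm{u},\bm{b})|^2\dx\dt,\\
C_u(r) &\eqdefa r^{-2}\int_{Q(r)}|\bm{u}|^3\dx\dt, \ \ C_b(r)\eqdefa r^{-2}\int_{Q(r)}|\bm{b}|^3\dx\dt, \ \ D(r)\eqdefa r^{-2}\int_{Q(r)}|\pi|^{\frac{3}{2}}\dx\dt.
\end{align*}
By hypothesis, $C_u(r)<\varepsilon_1$ for all $r\in(0,R_0)$.

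For the pressure, I would use the decomposition $\pi=\J(\bm{u}\otimes\bm{u}-\bm{b}\otimes\bm{b})\cdot\chi_{B(\rho)}+\pi_h$ with $\pi_h$ harmonic in $B(\rho)$ exactly as in Lemma \ref{A6}. The same iteration argument yields
\begin{align*}
D(r)\leqslant C\Bigl(\frac{r}{R}\Bigr)^{\alpha}D(R)+C\sup_{r<\rho\leqslant R}\bigl(C_u(\rho)+C_b(\rho)\bigr),
\end{align*}
so the pressure is controlled as soon as $C_u$ and $C_b$ are small. For the magnetic field, the key observation is that the induction equation $\partial_t\bm{b}+\bm{u}\cdot\nabla\bm{b}-\Delta\bm{b}=\bm{b}\cdot\nabla\bm{u}$ yields, after testing by $|\bm{b}|\bm{b}\varphi$ and using $\nabla\cdot\bm{u}=\nabla\cdot\bm{b}=0$, a local energy inequality for $|\bm{b}|^2$ alone whose right-hand side is essentially $\int(\bm{b}\otimes\bm{b}):\nabla(\bm{u}\varphi)\dx\ds$ plus lower-order cut-off errors. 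Combined with the local energy inequality \eqref{local energy} for the full system, this furnishes an estimate of the form
\begin{align*}
A(r)+E(r)\leqslant C\Bigl(\frac{R}{r}\Bigr)^2\Bigl[C_u(R)+C_b(R)+C_u(R)^{\frac{1}{3}}\bigl(A(R)+E(R)\bigr)+D(R)^{\frac{2}{3}}C_u(R)^{\frac{1}{3}}\Bigr],
\end{align*}
where the critical velocity nonlinearity $|\bm{u}|^2\bm{u}\cdot\nabla\varphi$ and the coupling term $(\bm{u}\cdot\bm{b})\bm{b}\cdot\nabla\varphi$ are bounded by H\"older's inequality so that every factor of $\bm{u}$ contributes a factor of $C_u^{1/3}$.

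The third step is a bootstrap/iteration. Choosing a small parameter $\theta\in(0,\frac{1}{2})$, combining the three preceding inequalities and using the Sobolev embedding $L^\infty_tL^2_x\cap L^2_tH^1_x\hookrightarrow L^{10/3}_{t,x}\hookrightarrow L^3_{t,x}$ to pass from $A+E$ back to $C_u+C_b$, one obtains
\begin{align*}
\mathcal{G}(\theta r)\leqslant C\theta^{\alpha}\mathcal{G}(r)+C\varepsilon_1^{\beta}\mathcal{G}(r)+C\varepsilon_1^{\gamma}
\end{align*}
for some $\alpha,\beta,\gamma>0$, where $\mathcal{G}(r)\eqdefa A(r)+E(r)+C_b(r)+D(r)$. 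Fixing $\theta$ so that $C\theta^{\alpha}\leqslant\frac{1}{4}$ and then choosing $\varepsilon_1$ so small that $C\varepsilon_1^{\beta}\leqslant\frac{1}{4}$, a standard iteration from radius $R_0$ down to $0$ drives $\mathcal{G}(r)$ into an $O(\varepsilon_1^{\gamma})$ neighborhood. In particular for some $r_*\in(0,R_0)$ one has $C_u(r_*)+C_b(r_*)+D(r_*)<\varepsilon_0$, and Lemma \ref{A9} delivers regularity at $(0,0)$.

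The main obstacle is the second step, namely controlling $\bm{b}$ in $L^3$ using \emph{only} smallness of $\bm{u}$: the forcing $\bm{b}\cdot\nabla\bm{u}$ does not obviously close since it contains a full derivative of $\bm{u}$. The crucial point is to move the derivative off $\bm{u}$ by integration by parts and to absorb $\|\nabla\bm{b}\|_{L^2}$ into the dissipation on the left-hand side, so that the resulting bound is quadratic in $\bm{b}$ times a factor proportional to $\|\bm{u}\|_{L^3}$ on the support of the test function. This is precisely where the hypothesis $C_u(r)<\varepsilon_1$ is used to produce the small multiplicative constant that makes the iteration close, and any inefficient use of H\"older (for instance, pairing $\bm{u}$ in $L^p$ with $p>3$) would destroy the scaling and prevent the bootstrap from terminating.
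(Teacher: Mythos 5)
This lemma is not proved in the paper at all: it is imported verbatim as Theorem 1.1 of the cited work of Wang--Zhang (SIAM J. Math. Anal., 2013), so there is no in-paper argument to compare against, and your proposal has to stand on its own. Its first step is sound and is indeed the standard opening move: in the local energy inequality \eqref{local energy} every cubic term carries at least one factor of $\bm{u}$ (including the coupling term $(\bm{u}\cdot\bm{b})\bm{b}\cdot\nabla\varphi$), so H\"older plus interpolation lets you absorb $\varepsilon_1^{1/3}(A+E)$ into the left-hand side and conclude that $A(r)+E(r)$ is \emph{uniformly bounded} for small $r$. The pressure decomposition is also fine.

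The genuine gap is in your third step. The inequality $\mathcal{G}(\theta r)\leqslant C\theta^{\alpha}\mathcal{G}(r)+C\varepsilon_1^{\beta}\mathcal{G}(r)+C\varepsilon_1^{\gamma}$ with $\mathcal{G}=A+E+C_b+D$ has no source for the decay factor $\theta^{\alpha}$ in front of $A+E+C_b$: the local energy inequality produces the \emph{large} constant $\theta^{-2}$ when passing from scale $r$ to scale $\theta r$, and the interpolation inequality $C_b(\theta r)\leqslant C\theta^{3}A(r)^{3/2}+C\theta^{-3}A(r)^{3/4}E(r)^{3/4}$ only decays if $E(r)$ is already small, which is exactly what you do not have (boundedness of $A+E$ is all that Step 1 delivers, and $\varepsilon_0$ in Lemma \ref{A9} is a fixed absolute constant while the bound $M$ on $A+E$ may be large). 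So the scheme cannot drive $C_b+D$ below $\varepsilon_0$ and the appeal to Lemma \ref{A9} does not close; this passage from ``bounded'' to ``small'' for the magnetic field is precisely the heart of Wang--Zhang's theorem. The missing idea is to exploit that the induction equation is linear in $\bm{b}$ with divergence-form forcing $\nabla\cdot(\bm{u}\otimes\bm{b}-\bm{b}\otimes\bm{u})$, every term of which carries a factor of $\bm{u}$: splitting $\bm{b}$ on $Q(r)$ into a caloric part (whose scaled $L^{3}$ norm genuinely decays under rescaling) and a Duhamel part (small because $C_u<\varepsilon_1$) yields the decay $C_b(\theta r)\leqslant C\theta^{\sigma}C_b(r)+C(\theta)\varepsilon_1^{\gamma}(\dots)$ that your iteration silently assumes. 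Two further, smaller problems: testing the induction equation by $4|\bm{b}|^{2}\bm{b}\varphi^{4}$ (or $|\bm{b}|\bm{b}\varphi$) is not justified for a suitable weak solution, whose only assumed energy structure is the quadratic inequality \eqref{local energy} (and the resulting term $\int|\bm{b}|^{3}|\nabla\bm{u}|$ need not even be finite for $\bm{b}\in L^{10/3}_{t,x}$, $\nabla\bm{u}\in L^{2}_{t,x}$); and the claimed output of that test (``a local energy inequality for $|\bm{b}|^{2}$'') is inconsistent with the test function, which would produce an $L^{4}$- or $L^{3}$-type quantity.
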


\section*{Acknowledgments}
	Hui Chen was supported by Natural Science Foundation of Zhejiang Province(LQ19A010002). Chenyin Qian was supported by Natural Science Foundation of Zhejiang Province(LY20A010017). Ting Zhang was in part supported by National Natural Science Foundation of China (11771389, 11931010, 11621101). 
\bibliography{2021MHD}
\bibliographystyle{abbrv}
\end{document}